\newcommand{\Prob}[2]{\mathbb P_{#1} \left(#2\right)}
\newcommand{\E  }[2]{\mathbb E_{#1} #2}
\newcommand{\Exp}[1]{\mathbb E\left[#1\right]}
\newcommand{\statPoi}{T_{t,n,d}}
\newcommand{\reals}{\mathbb{R}}
\newcommand{\psf}{h}
\newcommand*\diff{\mathop{}\!\mathrm{d}}
\newcommand{\veps}{\varepsilon}
\newcommand{\nat}{\mathbb{N}}
\newcommand{\nato}{\mathbb{N}_0}
\newcommand{\ratioSum}{\rho}
\newcommand{\indicator}{\mathbf{1}}
\newcommand{\domain}{\mathcal{D}_{\veps,i}}
\newcommand{\Var}[1]{\mathbb V\left[#1\right]}
\newcommand{\V  }[2]{\mathbb V_{#1} #2}
\newcommand{\new}[1]{\textcolor{black}{#1}}
\newcommand{\newSupp}[1]{\new{#1}}
\DeclareMathOperator*{\erf}{erf}
\newcommand{\Gauss}{\mathcal{N}}
\DeclareMathOperator{\Poi}{Poi}
\DeclareMathOperator{\Bin}{Bin}
\DeclareMathOperator{\fwhm}{FWHM}
\DeclareMathOperator{\na}{NA}
\newtheorem{theorem}{Theorem}[section] 
\newtheorem{lemma}[theorem]{Lemma}
\theoremstyle{definition} 
\newtheorem{definition}[theorem]{Definition}
\newtheorem{corollary}[theorem]{Corollary}
\newtheorem{proposition}[theorem]{Proposition}
\newtheorem{remark}[theorem]{Remark}
\newtheorem{assumption}[theorem]{Assumption}
\begin{document}
	
	\begin{center}
		\begin{minipage}{.8\textwidth}
			\centering 
			\LARGE What is resolution? A statistical minimax testing perspective on super-resolution microscopy\\[0.5cm]
			
			\normalsize
			\textsc{Gytis Kulaitis}\\[0.1cm]
			\verb+gytis.kulaitis@mathematik.uni-goettingen.de+\\
			Institute for Mathematical Stochastics, University of G\"ottingen\\[0.3cm]
			
			\textsc{Axel Munk}\\[0.1cm]
			\verb+munk@math.uni-goettingen.de+\\
			Institute for Mathematical Stochastics, University of G\"ottingen\\
			and\\
			Felix Bernstein Institute for Mathematical Statistics in the Bioscience, University of G\"ottingen\\
			and\\
			Max Planck Institute for Biophysical Chemistry, G\"ottingen, Germany\\[0.3cm]
			
			\textsc{Frank Werner}\footnotemark[1]\\[0.1cm]
			\verb+frank.werner@mathematik.uni-wuerzburg.de+\\
			Institute of Mathematics, University of W\"urzburg
		\end{minipage}
	\end{center}
	
	\footnotetext[1]{Corresponding author}
	
	\begin{abstract}
		As a general rule of thumb the resolution of a light microscope (i.e. the ability to discern objects) is predominantly described by the full width at half maximum (FWHM) of its point spread function (psf)---the diameter of the blurring density at half of its maximum. Classical wave optics suggests a linear relationship between FWHM and resolution also manifested in the well known Abbe and Rayleigh criteria, dating back to the end of 19th century. However, during the last two decades conventional light microscopy has undergone a shift from microscopic scales to nanoscales. This increase in resolution comes with the need to incorporate the random nature of observations (light photons) and challenges the classical view of discernability, as we argue in this paper. Instead, we suggest a statistical description of resolution obtained from such random data. Our notion of discernability is based on statistical testing whether one or two objects with the same total intensity are present. For Poisson measurements we get linear dependence of the (minimax) detection boundary on the FWHM, whereas for a homogeneous Gaussian model the dependence of resolution is nonlinear. Hence, at small physical scales modeling by homogeneous gaussians is inadequate, although often implicitly assumed in many reconstruction algorithms. In contrast, the Poisson model and its variance stabilized Gaussian approximation seem to provide a statistically sound description of resolution at the nanoscale. Our theory is also applicable to other imaging setups, such as telescopes.
	\end{abstract}

	\textit{Keywords:} Microscopy, (super)resolution, nanoscopy, minimax, detection boundary, equivalence of experiments. \\[0.1cm]
	
	\textit{AMS classification numbers:} 60F05, 62B10, 62C20, 91B06, 94A12, 94A13, 94A15, 94A17.  \\[0.3cm]

	\date{\today}

	\section{Introduction}
	
	\subsection{Lens optics and diffraction}\label{sec:diffraction}
	
	According to geometrical optics, an ideal light microscope would be able to distinguish two points in space being arbitrary close. However, in 1873 Abbe \citep{Abbe1873} formulated what later became known as the \textit{Abbe diffraction limit} (\Cref{fig:airyFwhmAbbeRayleigh}C): Two points can be resolved only if their distance $d$ in space is at least
	\begin{equation}\label{eq:abbeLimit}
	d=\frac{\lambda}{2 \na},
	\end{equation}
	where $\lambda$ is the wavelength of incoming light and $\na$ is the numerical aperture of the microscope. The numerical aperture is equal to the product of the refractive index of the medium ($1$ for vacuum, $\approx 1$ for air) and the sine of one-half of the angle of the cone of light that can enter the microscope. Abbe \citep{Abbe1873} argued that diffraction and interference of light have to be taken into account when distances in the order of the wavelength of the illumination light are considered (see \citep{Cremer2013} and references therein for a comprehensive account). This paradigm has limited light microscopy for more than a century until the ground-breaking advent of super-resolution microscopy \citep{Hell1994}, see \Cref{sec:fromMicroToNano}. For the following, it is beneficial to recall the basic physics tailored to our needs, see also \citep{Aspelmeier2015}.
	
	
	
	
	Given a specimen under the microscope $f$, due to diffraction (and the resulting interference, see \Cref{fig:airyFwhmAbbeRayleigh}A and B) the imaging system causes a blur so that we do not simply observe an $M$ times magnified image of $f$. This blur is usually obtained by calculating analytically or estimating from data the blur pattern of a single point---the point spread function (psf) $h$. For an incoherent imaging system, e.g. a fluorescence microscope, using Huygens's principle\new{, see e.g. \citep[Section 8.2]{Born1999},} the image of the specimen then can be obtained by summing up the blurred images of the points constituting the sample. This results in a convolution
	\begin{equation}\label{eq:convolution}
	g\left(x\right) = \int_O h\left(x - Mx'\right) f(x')\diff x',
	\end{equation}
	where $O$ is the space containing the specimen---the object space---and $f:O\to\reals$. The space consisting of magnified points $Mx'$ is called the image space $I$ and $g:I\to\reals$ is the image of the specimen.
	
	If the microscope was perfect and there was no blur, then the psf $h$ would simply correspond to a delta function $\delta_{x-Mx'}$, so that $g(x) = f(x/M)$. In general, the psf $h$ can be computed explicitly by scalar diffraction theory. Under the assumption of circular aperture and using the paraxial approximation \citep{Born1999,Orfanidis2016}, $h$ becomes proportional to the Airy pattern \citep{Airy1835} (\Cref{fig:airyFwhmAbbeRayleigh}A)
	\begin{equation}\label{eq:airy}
	h\left(x\right) \propto \left| 2 A \left(\frac{2\pi}{\lambda} \frac{\na}{M} \left\Vert x\right\Vert_2\right)\right|^2,
	\end{equation}
	where $\lambda$ is the illumination wavelength and $||\cdot||_2$ is the Euclidean norm. The function $A$ in \eqref{eq:airy} is given by $A(u) = J_1(u)/u$, where $J_1$ is the Bessel function of the first kind. 
	
	Independently of Abbe, Lord Rayleigh formulated in 1879  a resolution criterion for spectroscopes \citep{Rayleigh1879}. Applied to microscopes Rayleigh's criterion reads that two point sources at $x_1$ and $x_2$ having equal intensity can just be resolved if the central maximum of the first psf centered at $x_1$ coincides with the first minimum of the second psf. The first zero of the Bessel function $J_1$ is at $x\approx 3.8317$ and hence $x/2\pi\approx 0.6098$. Thus, in the case of circular aperture the Rayleigh criterion is given by
	\begin{equation}\label{eq:rayleighLimit}
	d= 0.61\, \frac{\lambda}{\na}.
	\end{equation}
	Note that this is slightly more conservative than Abbe's result \eqref{eq:abbeLimit}. See \Cref{fig:airyFwhmAbbeRayleigh}C and D for a comparison.
	
	The resolution criteria \eqref{eq:abbeLimit} and \eqref{eq:rayleighLimit} can be understood in terms of the full width at half maximum (FWHM) of the (effective) psf (see \Cref{fig:airyFwhmAbbeRayleigh}B, where $\fwhm = \left|x_2 - x_1\right|$). More precisely, the FWHM is defined as the width of the psf when its intensity is half of its maximal intensity. The ability to state both Abbe and Rayleigh criteria in terms of the FWHM has lead to the common understanding that two point sources in space can be resolved by a light microscope as soon as their distance is larger than roughly the FWHM of the psf $h$. Usage of the FWHM as a resolution criterion dates back to at least 1927 \citep{Houston1927} and is still popular today \citep{Egner2020}. The FWHM criterion is particularly well-suited if the psf can be approximated by a Gaussian kernel as shown in \Cref{fig:airyFwhmAbbeRayleigh}B, since this function does not have any local minima. In fact, the approximation of the psf by a Gaussian is very common and sufficient for many practical purposes, see e.g. \citep{Diezmann2017}. 
	For an Airy pattern \eqref{eq:airy}, the FWHM can be computed by first computing the FWHM of $A(u)^2 = (J_1(u)/u)^2$, which---due to $\max_u A \left(u\right)^2 = A(0)^2 = 1$---is determined by the solution of $J_1(u) = \pm u / \sqrt{2}$. This yields an FWHM of $3.232$ for $A(u)^2$, and hence taking the additional scaling factors in \eqref{eq:airy} into account together with $Mx' = x$,
	we get the FWHM resolution criterion in its
	most common form
	\begin{equation}\label{eq:fwhmLimit}
	d = \fwhm = 0.51\, \frac{\lambda}{\na}.
	\end{equation}
	Thus, the FWHM limit is almost equal to the Abbe resolution limit \eqref{eq:abbeLimit} and somewhat below the Rayleigh resolution limit \eqref{eq:rayleighLimit}.
	
	\new{Note that all three resolution criteria postulate a linear dependency of the resolution on the FWHM, which is in good agreement with experimental results, see e.g. \cite{Egner2020}.}
	
	We mention that due to their generality, the above resolution criteria are not confined to microscopes and can also be applied to telescopes \citep{Acuna1997, Bertero2009}, or other imaging devices, in general. We stress that there are many other resolution criteria such as the recently popularized Fourier ring correlation \citep{Banterle2013}, which can be expressed in terms of the FWHM as well. 
	\new{Hence, in summary, the FWHM can be viewed as a simple but very informative number to quantify optical resolution.}
	
	\begin{figure}
		\centering
		\includegraphics[width=\textwidth]{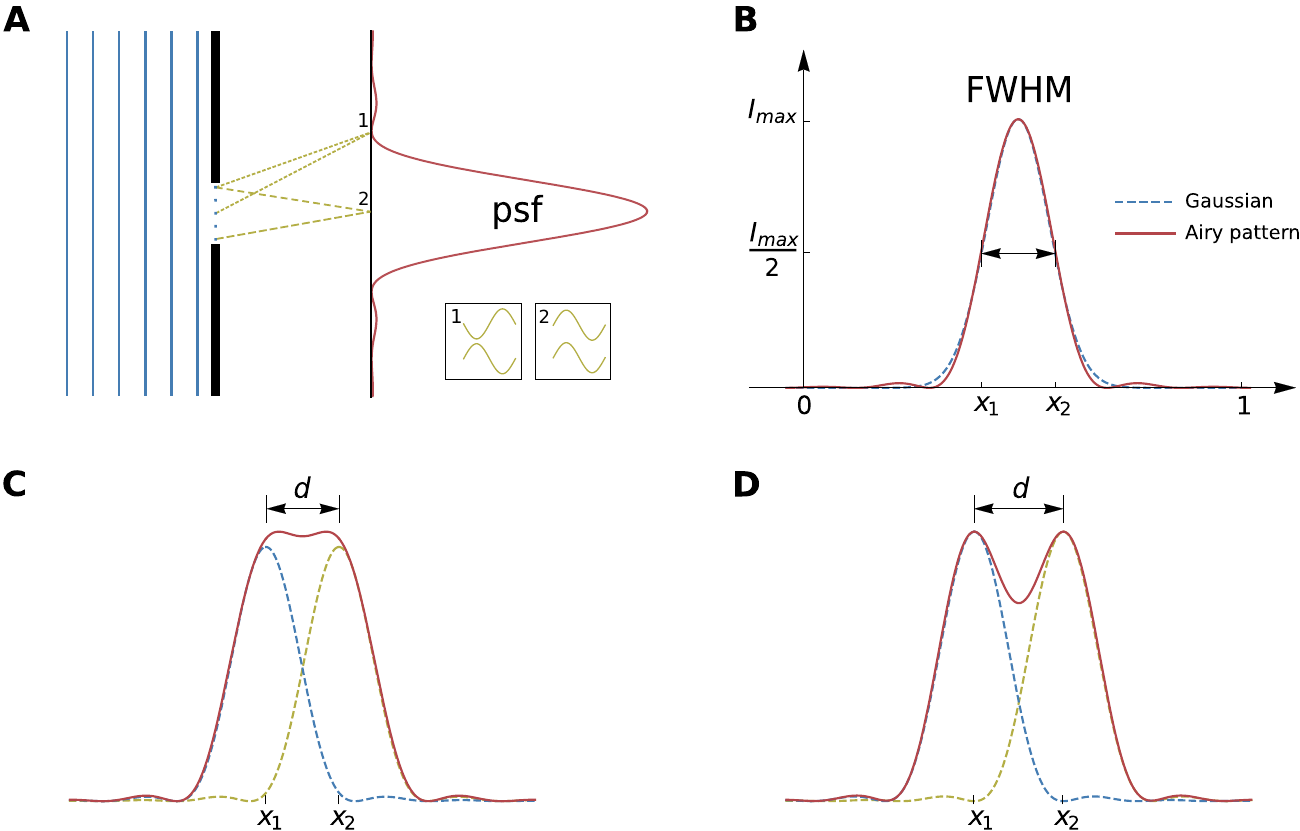}
		\caption{(A) 1D view of a 2D wave traveling through a circular aperture of width on the same order as the wavelength. By Huygen's principle each point on a wavefront acts as a point source (5 points shown). Due to diffraction and interference an Airy pattern is formed---where the light interferes constructively/destructively we get (local) maxima/minima in the intensity pattern. If the distance between the aperture and the screen is much larger than the wavelength, the slit acts as a point light source.
			(B) Approximation of an Airy pattern centered at $\frac{1}{2}(x_1 + x_2)$ by a Gaussian profile matching the maxima with the FWHM indicated.
			(C)/(D) Two Airy patterns centered at $x_1$ and $x_2$, distance \eqref{eq:abbeLimit}/\eqref{eq:rayleighLimit} apart, and their superposition (solid red).
		}
		\label{fig:airyFwhmAbbeRayleigh}
	\end{figure}
	
	\smallskip
	
	Concerning microscopes, from \Cref{eq:abbeLimit,eq:rayleighLimit,eq:fwhmLimit} it seems that there are only two possible ways to improve the resolution: either the wavelength has to be decreased, or the numerical aperture increased. Since the wavelength $\lambda$ is inversely proportional to the energy of the incoming light,
	decreasing the wavelength might damage the sample, a major issue in living cell microscopy. Hence, visible light (380--760 nm) is preferred for such applications. Concerning the second option, the numerical aperture of a modern lens is around 1.3--1.5 \citep{Diezmann2017}, and this value has not improved substantially during the last decades. In fact, Abbe's resolution limit has been standing as a paradigm for more than hundred years \citep{Hell1994}, limiting conventional light microscopes to about 250 nm lateral and 500 nm axial resolution\footnote{
		Axial resolution is the resolution in the longitudinal direction of the measurement trajectory ($z$-axis), whereas lateral resolution is the resolution in the image plane $(x,y)$. Note that the Abbe and Rayleigh criteria in \eqref{eq:abbeLimit} and \eqref{eq:rayleighLimit} hold for lateral resolution.
	}
	\citep{Betzig2006, Hell2007, Cremer2013, Heintzmann2013}. 
	
	\subsection{From microscopy to nanoscopy}\label{sec:fromMicroToNano}
	
	One important idea to improve on Abbe's resolution limit is confocal microscopy suggested by Minsky \citep{Minsky1961,Pawley2006} in 1961. After a laser excitation fluorescent dyes of fluorophores emit light of higher wavelength (less energy) than absorbed due to rotational and vibrational losses which then can be recorded in a detector device. Here only a small spot of the object is illuminated at any given time, and non-focused light is blocked by a pinhole. Moving the pinhole over the sample (scanning) creates multiple images which are then combined to produce the full image. Clearly, the smaller the pinhole, the more the resolution is increased. On the other hand, a smaller pinhole decreases the overall image intensity. Theoretically, confocal microscopy increases the resolution by $\sqrt{2}$, see e.g. \citep{Egner2020} or \citep{Hell2007}, but due to these competing effects practical increase is lower.
	Consequently, although providing some improvement, confocal microscopy on its own cannot break the resolution barrier \citep{Aspelmeier2015}.
	
	An early approach to overcome Abbe's resolution limit relies on the fact that both limits in \Cref{eq:abbeLimit,eq:rayleighLimit} are only valid in the \textit{far-field}, i.e. when sample and microscope are sufficiently far apart. Similarly, the regime when the sample and the microscope are less than a wavelength apart is called \textit{near-field}. In this case, the size of the aperture and not the wavelength determines the resolution \citep{Courjon2003}.
	In 1972 Ash and Nicholls \citep{Ash1972} went below Abbe's diffraction limit in the near-field.  Using 3 cm wavelength they achieved a resolution of $\lambda/60$. Current experiments are able to achieve a lateral resolution of 20 nm and a vertical resolution of 2--5 nm \citep{Durig1986, Oshikane2007}. Although impressive, near-field microscopes have certain disadvantages, the most obvious being that the specimen must be very close to the microscope and one is hence mostly limited to surface measurements. Moreover, they are unsuitable for transparent objects which excludes many biological samples. 
	
	\new{The major breakthrough to overcome Abbe's diffraction limit using far-field microscopy is intimately related to the development of photoswitchable fluorophores \citep{Hell2007, Huang2009}. These can be switched on and off in a statistically controlled manner, which finally allows to narrow the region of photon emission down to the nanoscale---resulting in super resolution microscopy. 
		The fundamental importance of this principle and its impact on modern science is reflected in the 2014 Nobel prize in Chemistry shared by E. Betzig, S. Hell and W. Moerner ``for the development of super-resolved fluorescence microscopy'' \citep{Ehrenberg2014}, where the term \textit{super-resolution} refers to any technique, which is able to break Abbe's diffraction limit in the far field. Since super-resolution microscopy is able to achieve resolutions in the nanoscale, it is also called \textit{nanoscopy}.}
	
	\new{The present paper analyzes not only ``classical'' microscopy, but also an important type of nanoscopy---the so-called \textbf{scanning mode} super-resolution microscopy.
		Before we dive into the scanning mode in the next paragraphs, we would like to stress that other important non-scanning mode super-resolution techniques exist, which we do not address in this paper. This includes in particular Single Marker Switching (SMS) nanoscopy in its various forms \citep{Betzig2006, Rust2006, Hess2006, Heilemann2008, Egner2007}. For a survey from a statistical perspective on nanoscale imaging in general see, e.g., \citep{Staudt2020} and for a survey on statistical single-molecule techniques see, e.g., \citep{Du2020}.}

	\new{In the scanning mode super-resolution microscopy, non-linearity of the response to excitation is exploited and dyes or fluorophores in a pre-defined region are shut off to enhance resolution. The sample itself---just like in confocal microscopy---is scanned along a grid by illuminating it with a (pulsed) excitation beam focused at the current grid point.
		We do not aim to describe all possible approaches here in detail (see, e.g., \citep{Wouterlood2012} or \citep{Aspelmeier2015} for a survey accessible to a statistical audience) and focus on the most prominent state-of-the-art scanning mode super-resolution technique---Stimulated Emission Depletion (STED) \citep{Hell1994, Hell2007, Klar2000, Egner2020}. In STED the fluorescent dyes are only excited in the center of a torus shaped region and are actively depleted inside the torus, see \Cref{fig:sted}. Super-resolution is achieved by selectively switching off the surrounding molecules by a second laser beam (depletion). Using a dichroic beamsplitter, it is ensured that only the fluoresced light is detected at the detector. On each grid point this procedure is repeated for a fixed time (the pixel dwell time) $t$ or equivalently for a fixed number of pulses (also denoted by $t$ for simplicity). Therefore, one is able to image specific predefined structures, instead of observing a superposition of the whole sample.}
	
	\begin{figure}
		\hspace{-0.2mm}\begin{subfigure}{.33\textwidth}
			\centering
			\includegraphics[width=.95\linewidth]{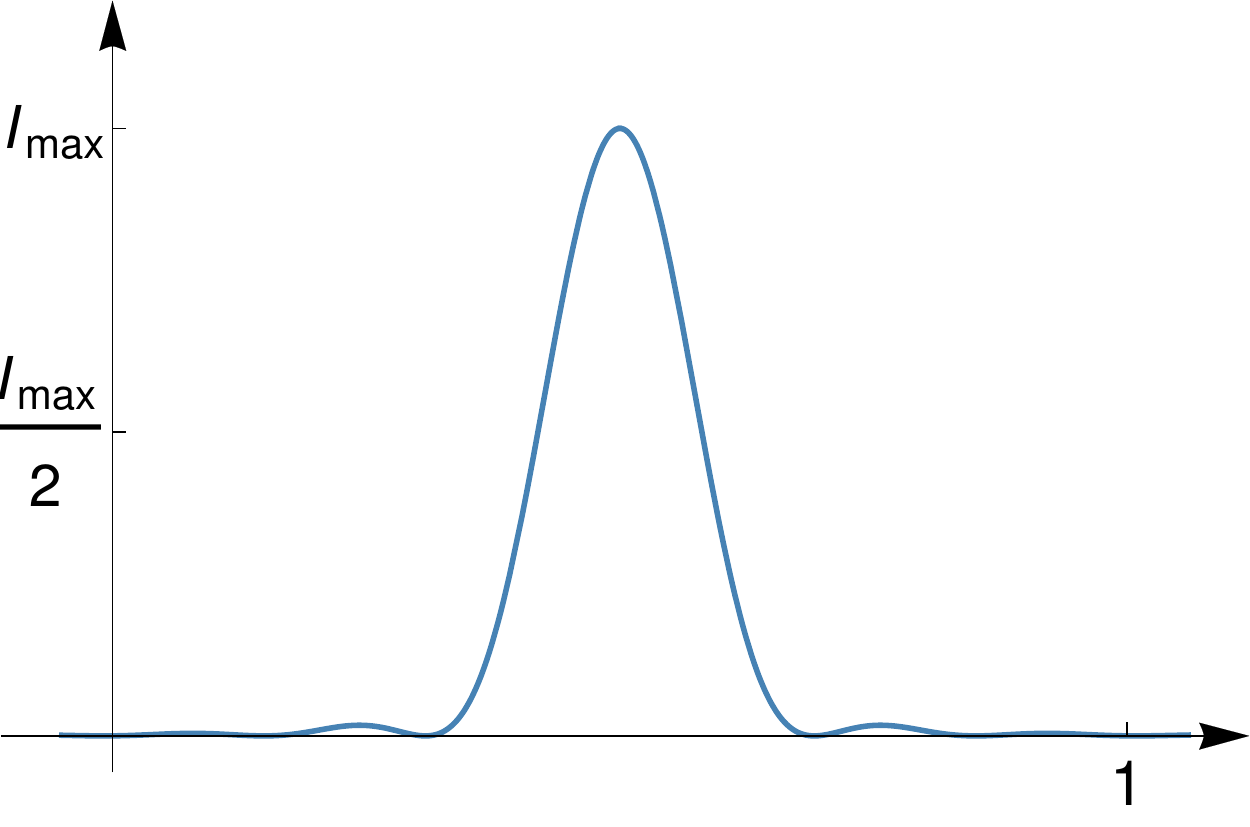}
		\end{subfigure}
		\begin{subfigure}{.33\textwidth}
			\centering
			\includegraphics[width=.95\linewidth]{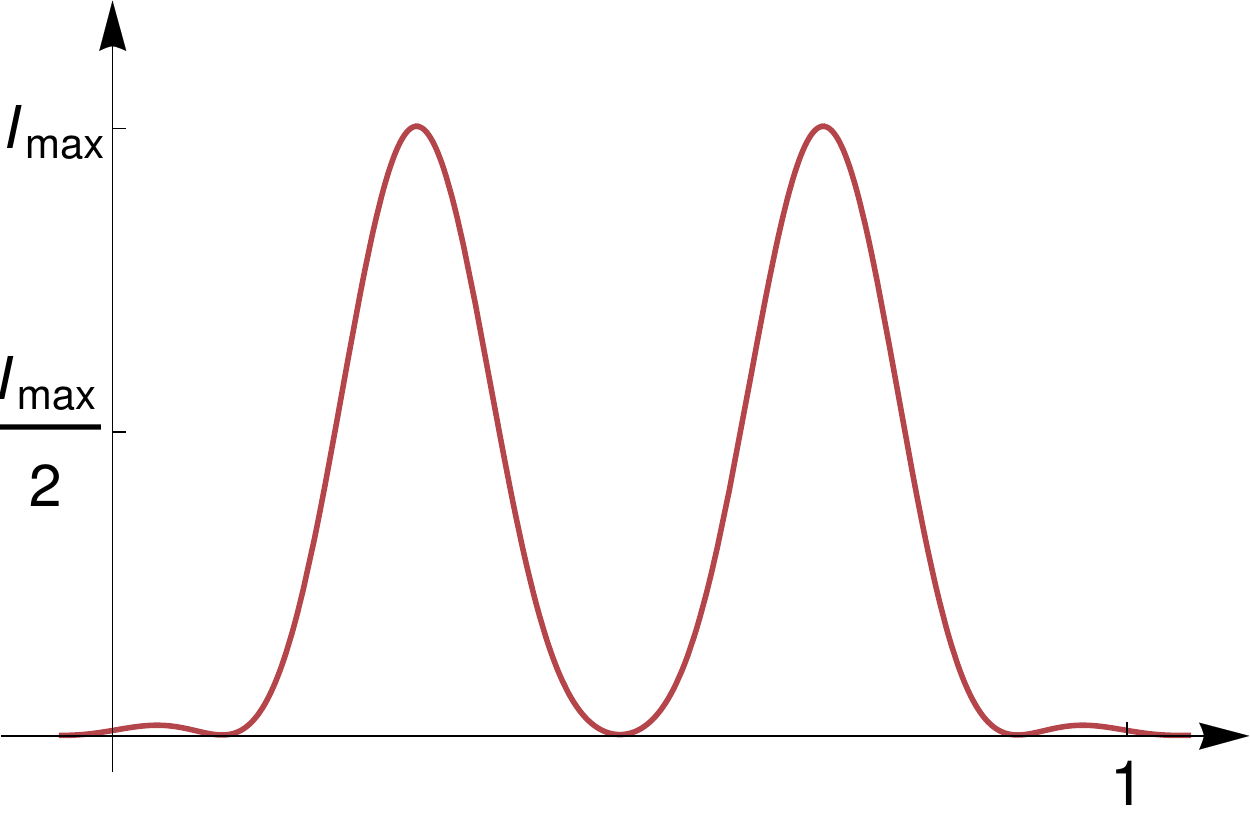}
		\end{subfigure}
		\begin{subfigure}{.33\textwidth}
			\centering
			\includegraphics[width=.95\linewidth]{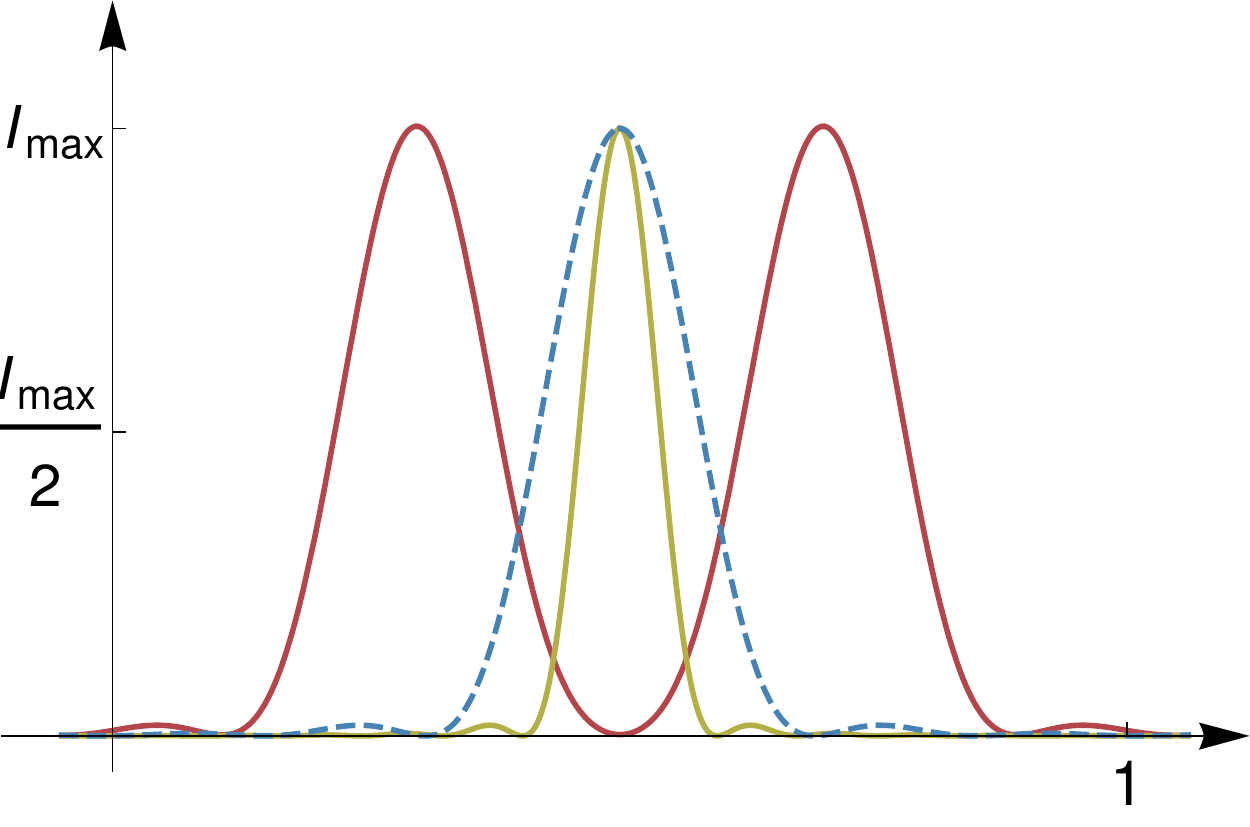}
		\end{subfigure}\vspace{0.5em}
		\begin{subfigure}{.33\textwidth}
			\centering
			\includegraphics[width=.92\linewidth]{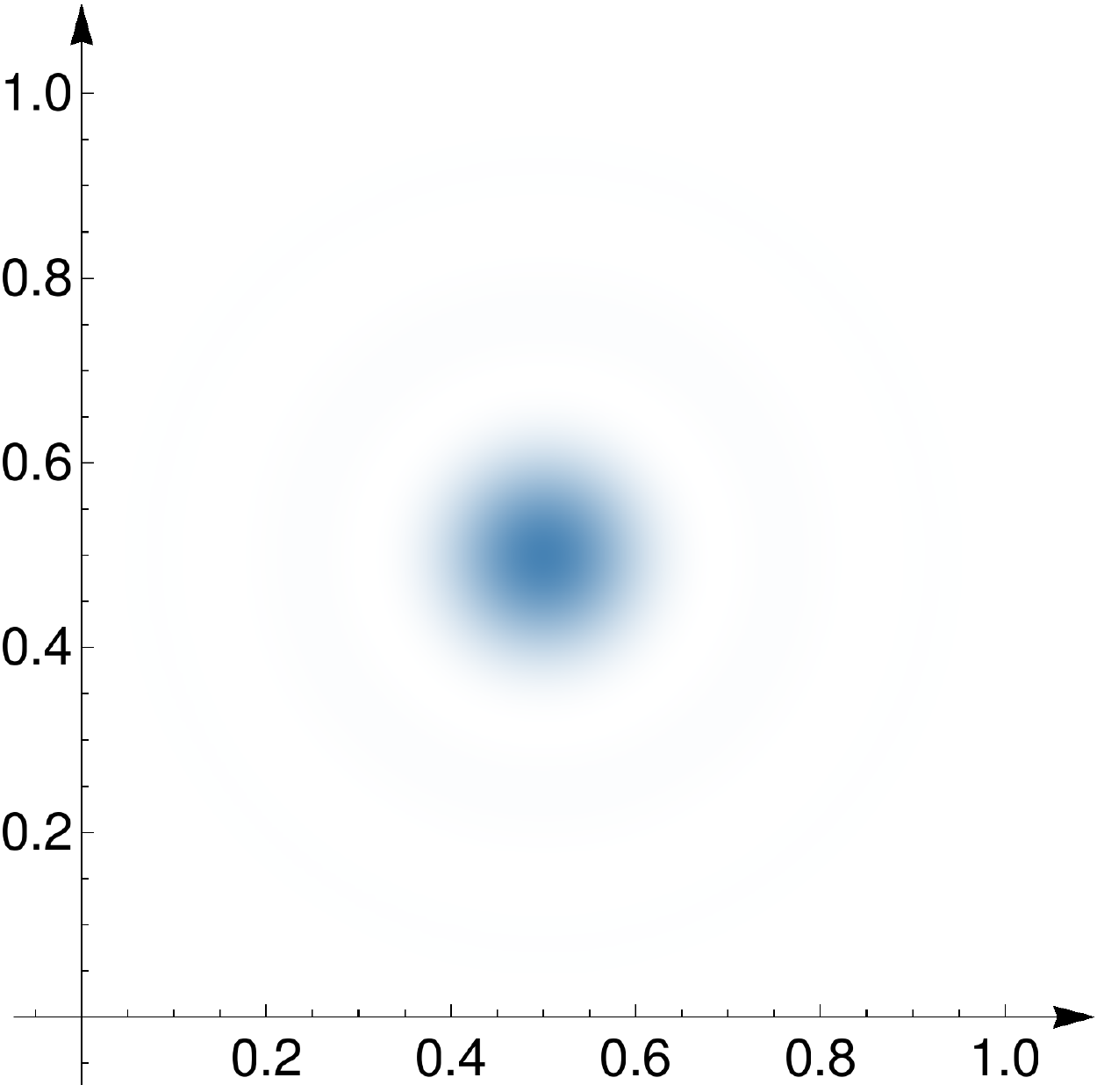}
		\end{subfigure}%
		\begin{subfigure}{.33\textwidth}
			\centering
			\includegraphics[width=.92\linewidth]{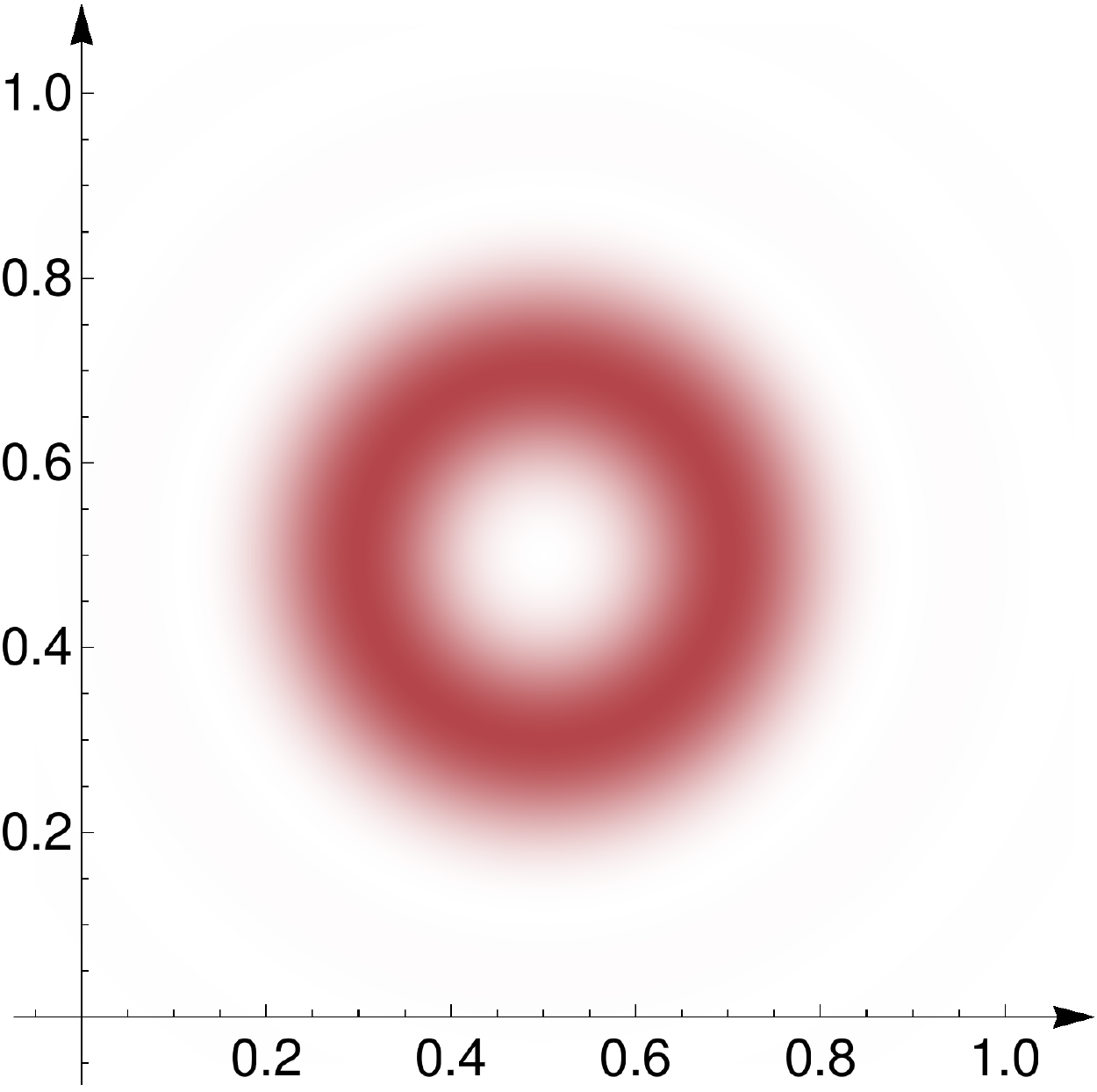}
		\end{subfigure}
		\begin{subfigure}{.33\textwidth}
			\centering
			\includegraphics[width=.92\linewidth]{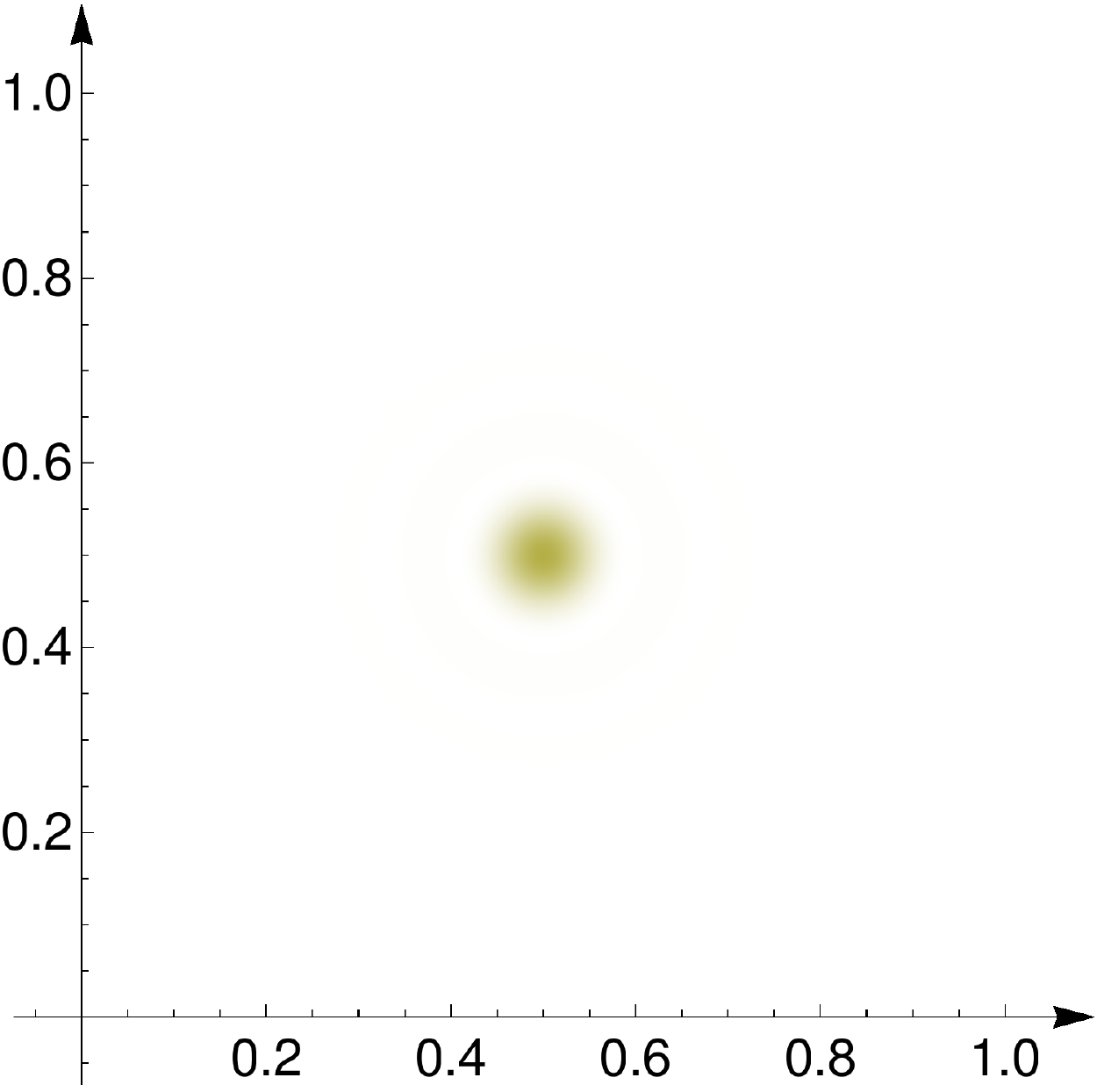}
		\end{subfigure}\hspace{10mm}
		\caption{STED microscopy. Column I: Original psf (blue), Column II: Depletion psf (red), Column III: effective psf (solid beige). The top row shows psfs in 1D, the bottom row in 2D.
		}
		\label{fig:sted}
	\end{figure}
	
	\begin{figure}
		\begin{subfigure}{.49\textwidth}
			\centering
			\includegraphics[width=.95\linewidth]{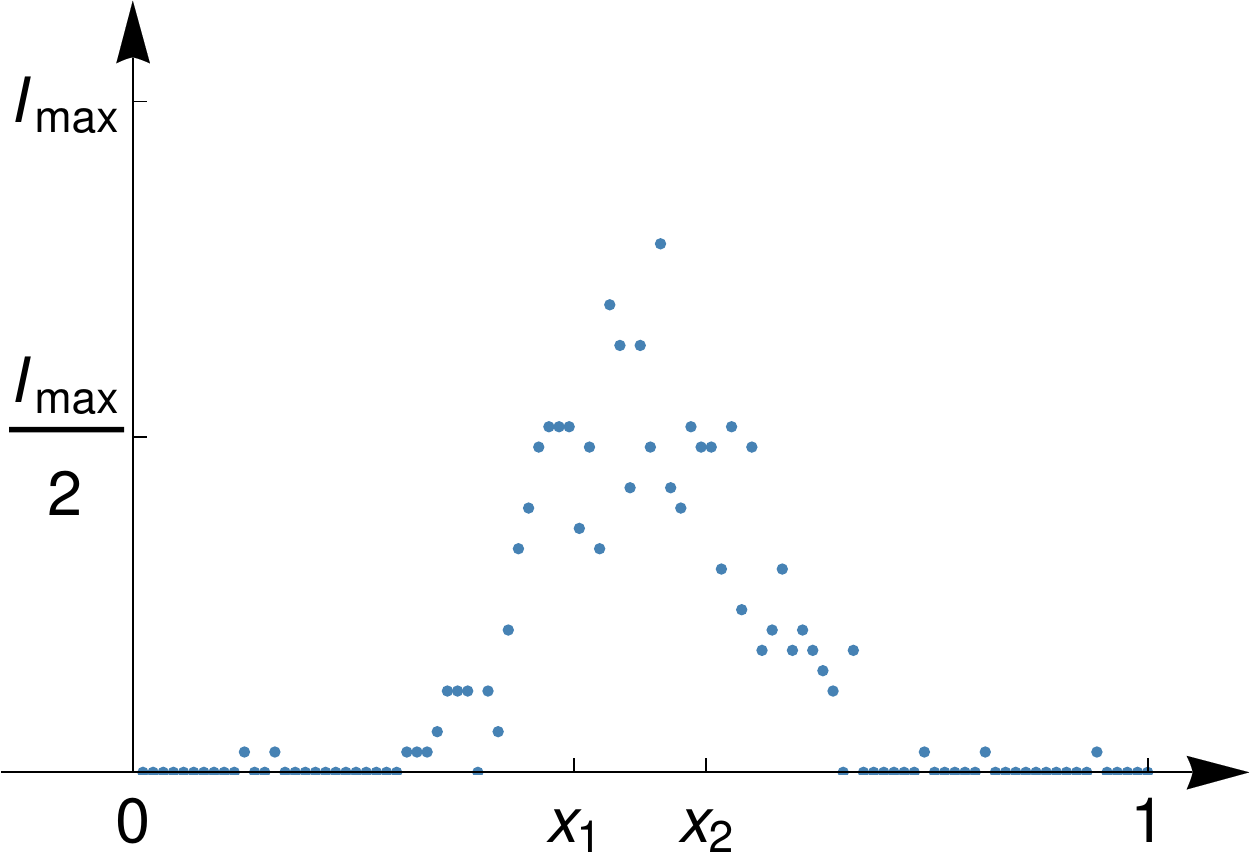}
		\end{subfigure}
		\begin{subfigure}{.49\textwidth}
			\centering
			\includegraphics[width=.95\linewidth]{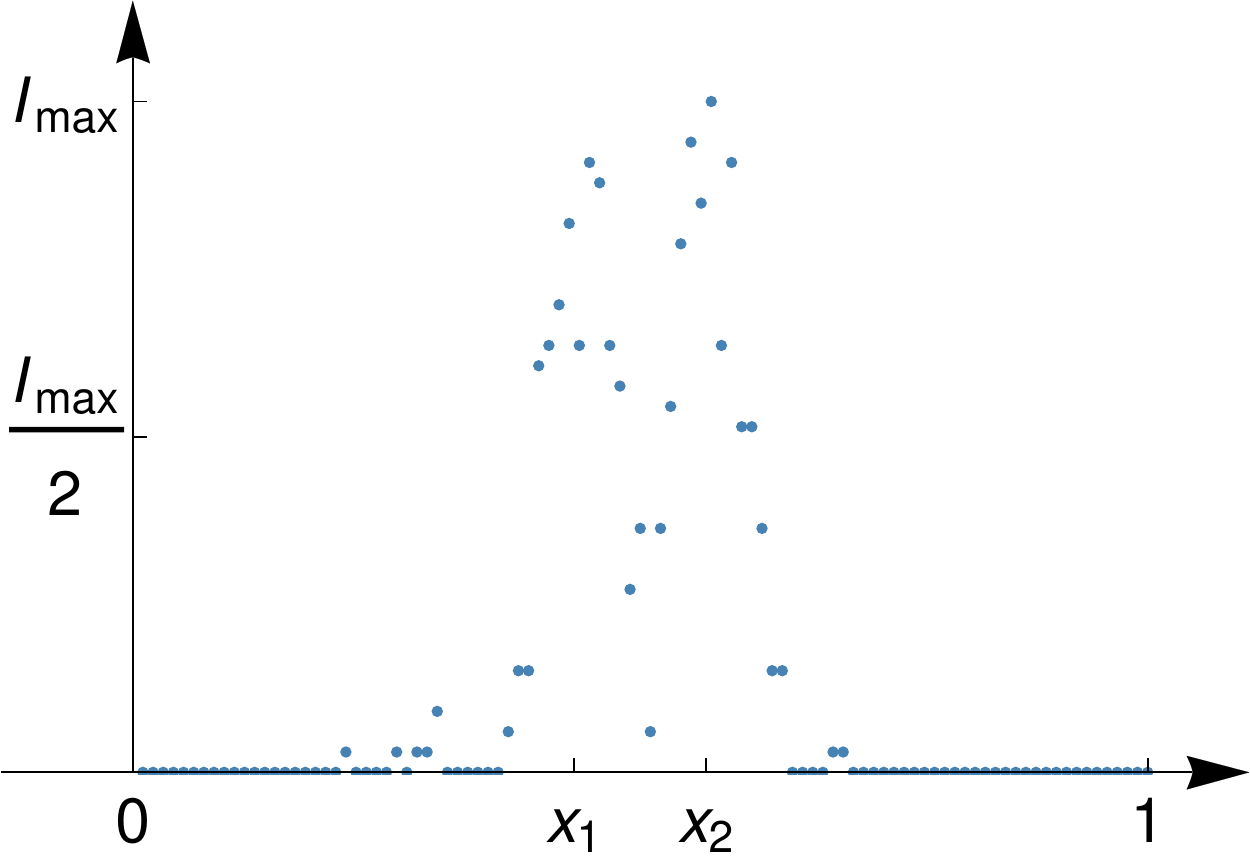}
		\end{subfigure}\\
		\caption{
			Photons generated by two point sources at $x_1$ and $x_2$ which are hard to distinguish for a conventional light microscope having Airy psf \eqref{eq:airy}, but are easily distinguishable with narrower effective psf after STED.	
		}\label{fig:stedDiscrete}
	\end{figure}

	To analyze the resolution of a STED super-resolution microscope, we can still employ an analog to Rayleigh's criterion \eqref{eq:rayleighLimit} by computing the effective psf, see \new{\Cref{fig:sted,fig:stedDiscrete}}. This leads to a resolution criterion of the form
	\begin{equation}\label{eq:STEDresolution}
	d = \frac{\lambda}{2 NA \sqrt{1 + \xi}},
	\end{equation}
	see \citep{Hell2007, Westphal2005}, where $\xi > 0$ is the shrinkage factor increasing in the direction of maximal intensity within the depletion spot. Note that, in principle, the resolution can be increased arbitrarily by increasing $\xi$. However, in practice, this leads to a decreased number of measured photons in view of the thinner psf and hence to a decreased signal-to-noise ratio. We will discuss this trade-off in Section 1.7. In experiments, resolutions of around $2.4$ nm have been achieved this way, see \citep{Rittweger2009}.

	\new{Even though the resolution formula \eqref{eq:STEDresolution} is reasonable in view of \eqref{eq:abbeLimit} and \eqref{eq:rayleighLimit} if both are understood in terms of the FWHM, \eqref{eq:STEDresolution} lacks dependency on another important contribution in super-resolution microscopy---the statistical error. In experiments, one is clearly aware that} both the experimental setup \textbf{and} the statistical error should play a role in the actual resolution of a (super-resolution) microscope. In fact, in any real world experiment, the noise plays a central role for the actual ability to distinguish two point sources, leading to the conclusion that the noise level (e.g. the observed number of photons) should also play a role in \Cref{eq:abbeLimit,eq:rayleighLimit,eq:STEDresolution}. This plays a minor role on the microscopic scale but becomes more severe as resolution increases, especially at the nanoscale. Given the vast applications of microscopy and rapid progress of super-resolution, a refined understanding of fundamental principles governing resolution is of immense importance. However, as far as we know, such mathematically rigorous model of statistical resolution is still lacking. To overcome this gap, in this paper we aim to provide unifying modeling (see \Cref{sec:statistics}) and statistical analysis (\Crefrange{sec:statTestingProblem}{sec:physicalImplications}), which allow to understand both the effect of the experimental setup (in terms of the convolution in \eqref{eq:convolution}) and the random nature of photon counts on the resulting resolution.

	\section{Statistics}\label{sec:statistics}
	
	\subsection{Statistical model}\label{sec:statisticalModel}
	
	To derive a mathematically rigorous formulation for the resolution of a (fluorescence) microscope with psf $h$, we start with modeling the actual observations. Throughout this paper we confine ourselves to the one-dimensional problem which is a prototype for higher spatial dimensions (see \Cref{rmk:higherDimensions} below). 
	
	In practice, the physical space $O$ is scanned bin-wise or sampled at once by a CCD camera or another detection device. We will assume that the image space $I$, the space of magnified points, is the unit interval $[0,1]$, and each scanned bin in $O$ corresponds to a bin $B_i = \left[(i-1)/n,i/n\right] \subset I$. From a mathematical point of view, we can for most experimental setups also re-scale $O = \left[0,1\right]$, and in this case scanning at a bin $B_i$ means to center the psf at the center of $B_i$. Each bin is either illuminated $t\in \nat$ times by a short excitation pulse (pulsed illumination), or it is illuminated continuously for some time $t$ (continuous illumination) which we may also assume to be an integer due to time discretization in the measurement process (e.g. $t$ can denote time in pico- or nanoseconds). For each bin we observe the total number of detected photons denoted by $Y_i \in \nat$. Clearly, $Y_i$ is a random quantity, but according to the above reasoning, we may assume that
	\begin{equation}\label{eq:expectationY_i}
	\Exp{Y_i} = t\int_{B_i} g(x) \, \mathrm{d}x,
	\end{equation}
	where $g$ is the image of the specimen as defined in \eqref{eq:convolution}. We assume here and in the following that the statistical experiments when measuring at $B_i$ are independent for different values of $i$, which is physically evident in many measurement settings, see e.g. \citep{Aspelmeier2015,Hohage2016}. Consequently, we observe a vector $\left(Y_i\right)_{ i\in \{1,\ldots,n\}}$ of independent random variables
	\begin{align}\label{eq:model}
	Y_i \overset{\text{indep.}}{\sim} F_{t\int_{B_i} g(x)\,\mathrm{d}x}, \qquad i \in \left\{1,...,n\right\}
	\end{align}
	with a family of distributions $F_{t\theta}$ for parameters $\theta \in \left(0,\infty\right)$ in mean value parametrization. 
	
	The specific choice of $F_{t\theta}$ depends fundamentally on the imaging setup and on the number of photons collected. We consider the following scenarios here:
	\begin{description}
		
		\item[\textbf{Poisson model (P)}]\label{model:poisson} The finest model we will consider here is a Poisson model $F_{t\theta} = \text{Poi} \left(t \theta\right)$. This is well-known and widely used in the literature, see e.g. \citep{Bertero2009,Hohage2016}. It is often derived in the setting of continuous illumination, but the Poisson model can also be motivated by means of the law of small numbers, see e.g. \citep{Munk2020}.
		
		\item[\textbf{Variance stabilized Gaussian model (VSG)}]\label{model:VSG} Due to the central limit theorem, for sufficiently large $t$ also normal models appear a reasonable approximation. Following the previous reasoning, this then leads to $\mathcal{N}(t\theta, t\theta)$. Applying the variance stabilizing transform $f(x) = 2\sqrt{x}$, we thus analyze $F_{2\sqrt{t\theta}} = \mathcal{N}(2\sqrt{t\theta}, 1)$.
		
		\item[\textbf{Homogeneous Gaussian model (HG)}]\label{model:HG}
		The simplest model to assume in this situation is the homogeneous Gaussian model $\mathcal{N}(\mu, \sigma^2)$ for some general mean $\mu = t\theta$ and some constant variance $\sigma^2$. In particular, many recovery algorithms rely on this model assumption, see e.g. \citep{Bertero2009,Hohage2016} for further discussion. After re-normalizing the mean $\mu$ by $\sigma$, we can w.l.o.g. set $\sigma = 1$ and consider the model $\mathcal{N}(t\theta, 1)$.
	\end{description}
	For a comprehensive discussion and more details on the modeling see e.g. \citep{Aspelmeier2015,Munk2020}. We emphasize that the homogeneous Gaussian model is commonly used as a proxy for ``microscopy with noise'' and has been investigated in many studies. We will, however, show that it is misleading in the present context. In contrast, we will show that the other two models (asymptotically) lead to the same resolution which scales linearly with the FWHM in agreement with the physical understanding.
	\begin{remark}\label{rmk:semiclassical}
		We consider photons, but treat them as classical particles. In the case of Poisson model, our modeling as given in \eqref{eq:expectationY_i} and \eqref{eq:model} corresponds to the so-called \textit{semiclassical detection model}, see e.g. Chapter 9 of \citep{Goodman1985}. This model is an approximation and follows from the general theory of light and matter interactions---quantum electrodynamics (QED), see e.g. \citep{Leonhardt2010} and in particular its Appendix B.
	\end{remark}
	
	\new{\begin{remark}\label{rmk:generalModels}
			In the above models, the only source of randomness is photon counting. However, in practice other sources can also occur, which require a detailed model of their own. An important second source of randomness in actual measurements is background contributions which arise either from external light sources or from contributions in out-of-focus regions. This can be included in the above models by adding a constant factor $\gamma > 0$ to the psf $\psf$, at least as long as the background can be assumed to be homogeneous (see also \Cref{rmk:backNoise} below). Furthermore, it might happen that the sensing devices cannot register each single photon precisely, but only with probability $\eta \in \left(0,1\right)$. From a statistical point of view, this leads to thinning of the counting process, see \cite{Munk2020} for details. An appropriate generalization of the above models in this case is given by multiplying the expectations with $\eta$. In the following, for notational simplicity we will focus on the three models P, VSG and HG without background and thinning, but will discuss our results in the more general case in \Cref{rmk:mainThmWithBackNoise}.
	\end{remark}}
	
	\subsection{Statistical testing problem}\label{sec:statTestingProblem}
	Building on the models of \Cref{sec:statisticalModel}, in the following we will describe the resolution of a microscope with psf $\psf \geq 0$ as a detection problem. We consider general psfs and provide a mathematically rigorous (asymptotic) statistical testing theory for resolution. To this end, we test the hypothesis that there is one point source at $x_0'$ against the alternative that there are two equally bright point sources at $x_1'$ and $x_2'$, respectively. This reflects the ability to discern between one and two objects, and is in line with many common resolution criteria, see e.g. \citep{Dekker1997}. Taking into account the previous considerations on diffraction, in particular \eqref{eq:convolution}, and setting $x_i = Mx_i'$ for $i \in \{0,1,2\}$ we hence consider testing the hypothesis that
	\begin{subequations}\label{eq:H0vsH1}
		\begin{equation}\label{eq:H0}
		H_0 : g(x) = \psf(x-x_0)
		\end{equation}
		against the alternative
		\begin{equation}\label{eq:H1}
		H_1 : g(x) = \frac{1}{2}\,\psf(x-x_1) + \frac{1}{2}\,\psf(x-x_2),
		\end{equation}
	\end{subequations}
	see \Cref{fig:testProblem} for an illustration. The factors of $1/2$ in the alternative ensure that the image $g$ has the same intensity under $H_0$ and $H_1$ (for generalizations to $q h(x-x_1) + (1-q) h(x-x_2)$ with $q\in(0,1)$ see \newSupp{\Cref{sec:generalCase}}). 
	We always assume that $x_0$ is fixed. For each particular alternative, we also assume that $x_1$ and $x_2$ are fixed as well. However, in the asymptotic analysis we will let $d = |x_1 - x_2|\to0$\footnote{In our analysis we will couple all parameters to the illumination time $t$. However, for ease of readability we omit the subscripts $t$, i.e. we write $n=n_t$ and $d=d_t$ throughout.} and later on we will consider the worst case scenario (\Cref{thm:symmetricallyPlacedHardest}). Without loss of generality, we scale the image function $g$ in \eqref{eq:H0vsH1} to be defined on the unit interval $[0,1]$ and normalize it to have volume $1$. 
	Setting the domain of $g$ to be the unit interval $[0,1]$ allows us to interpret $h(\cdot - x_i)$ as functions with domain $[0,1]$ for $i\in\{0,1,2\}$.
	
	
	\begin{figure}
		\centering
		\begin{subfigure}{.49\textwidth}
			\centering
			\includegraphics[width=.95\linewidth]{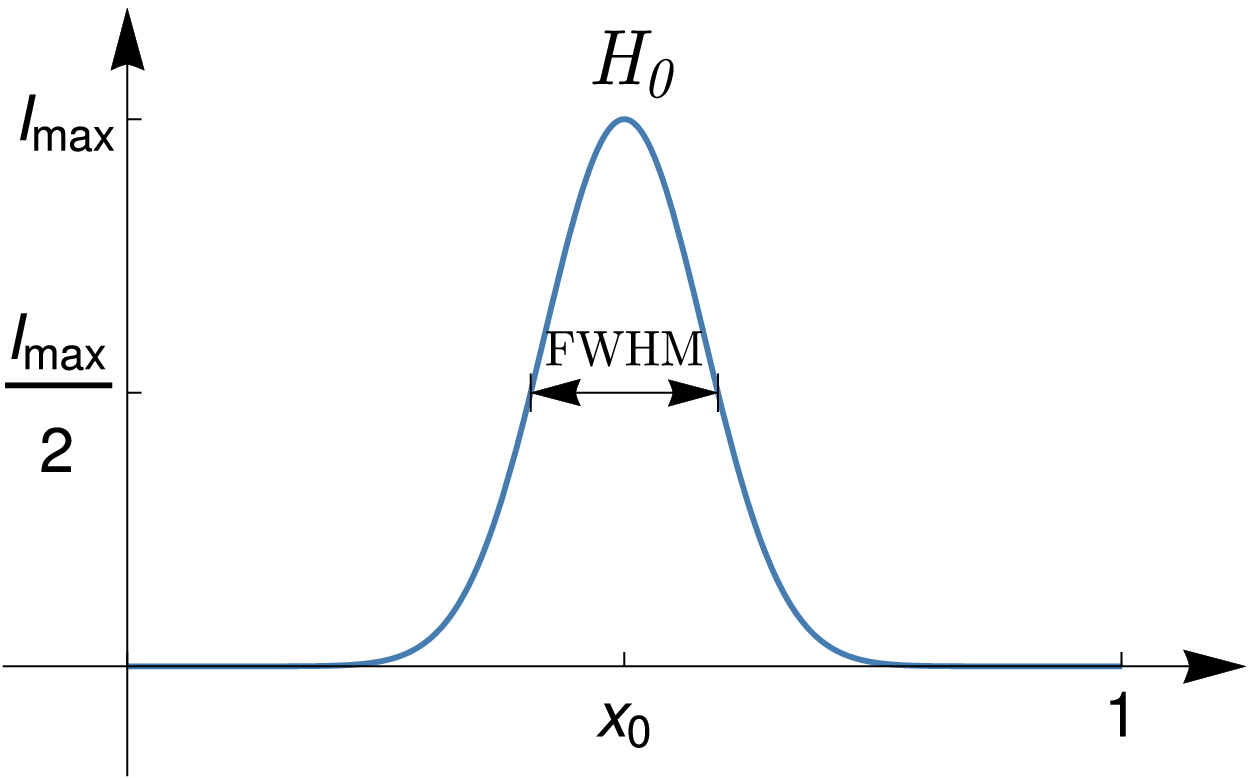}
		\end{subfigure}
		\begin{subfigure}{.49\textwidth}
			\centering
			\includegraphics[width=.95\linewidth]{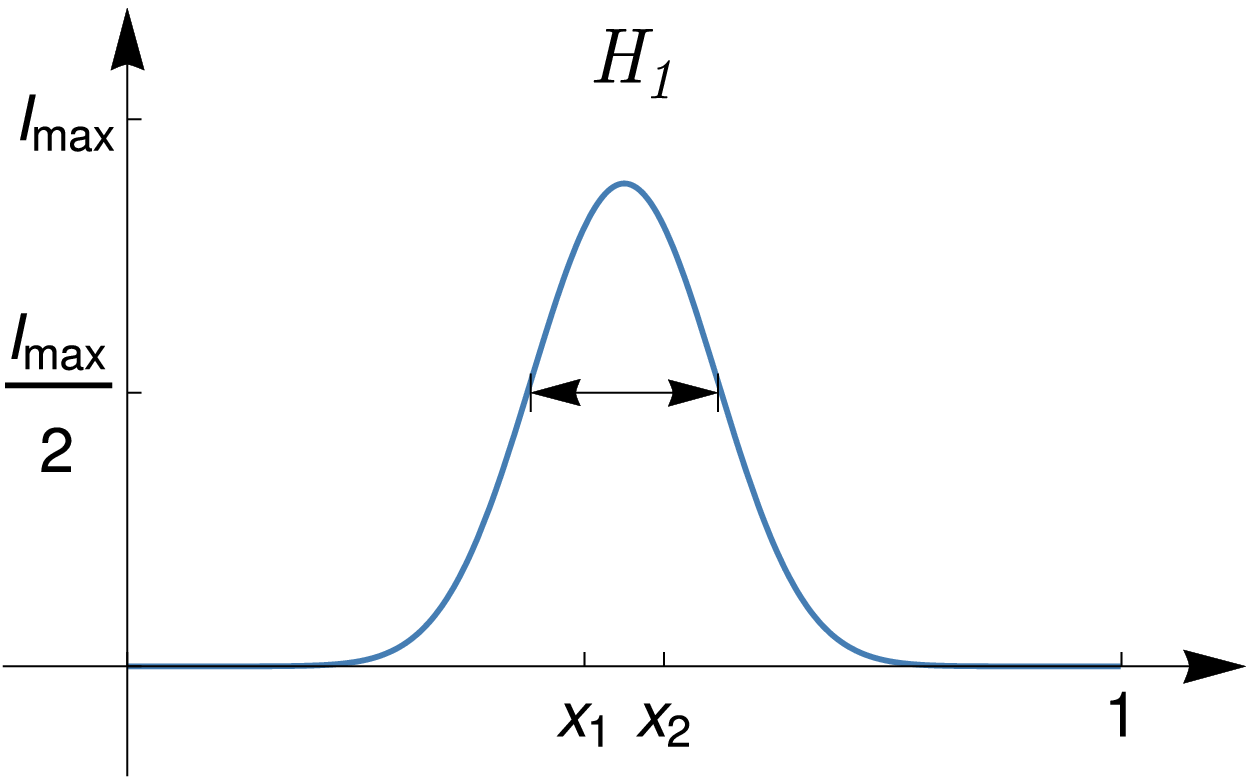}
		\end{subfigure}\vspace{2mm}
		\begin{subfigure}{.49\textwidth}
			\centering
			\includegraphics[width=.95\linewidth]{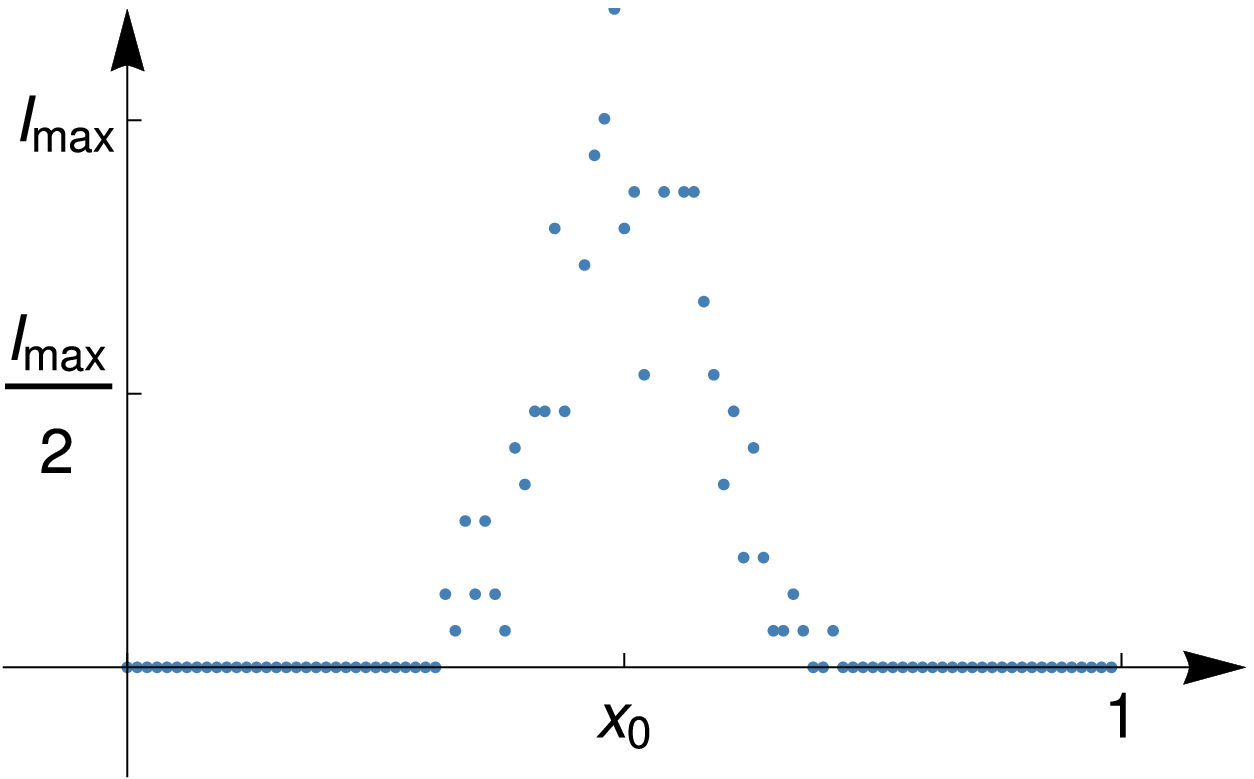}
		\end{subfigure}
		\begin{subfigure}{.49\textwidth}
			\centering
			\includegraphics[width=.95\linewidth]{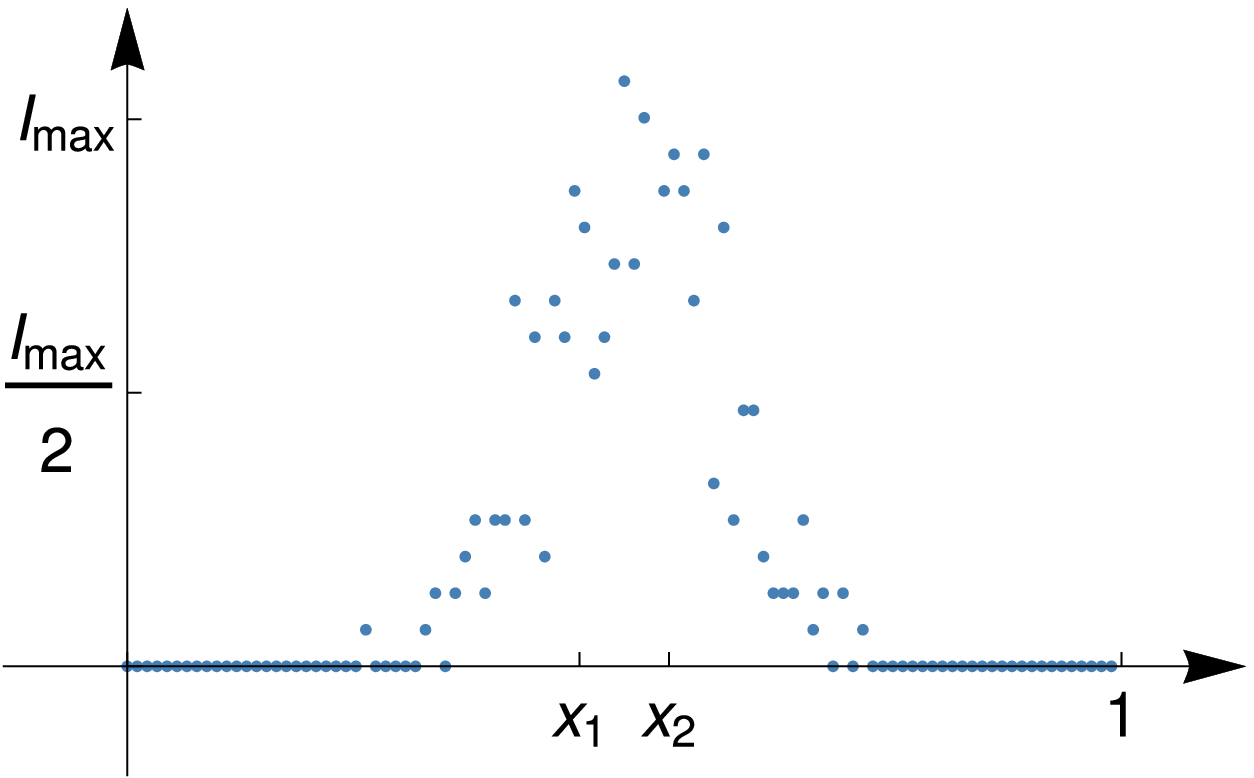}
		\end{subfigure}\vspace{2mm}
		\begin{subfigure}{.49\textwidth}
			\centering
			\includegraphics[width=.95\linewidth]{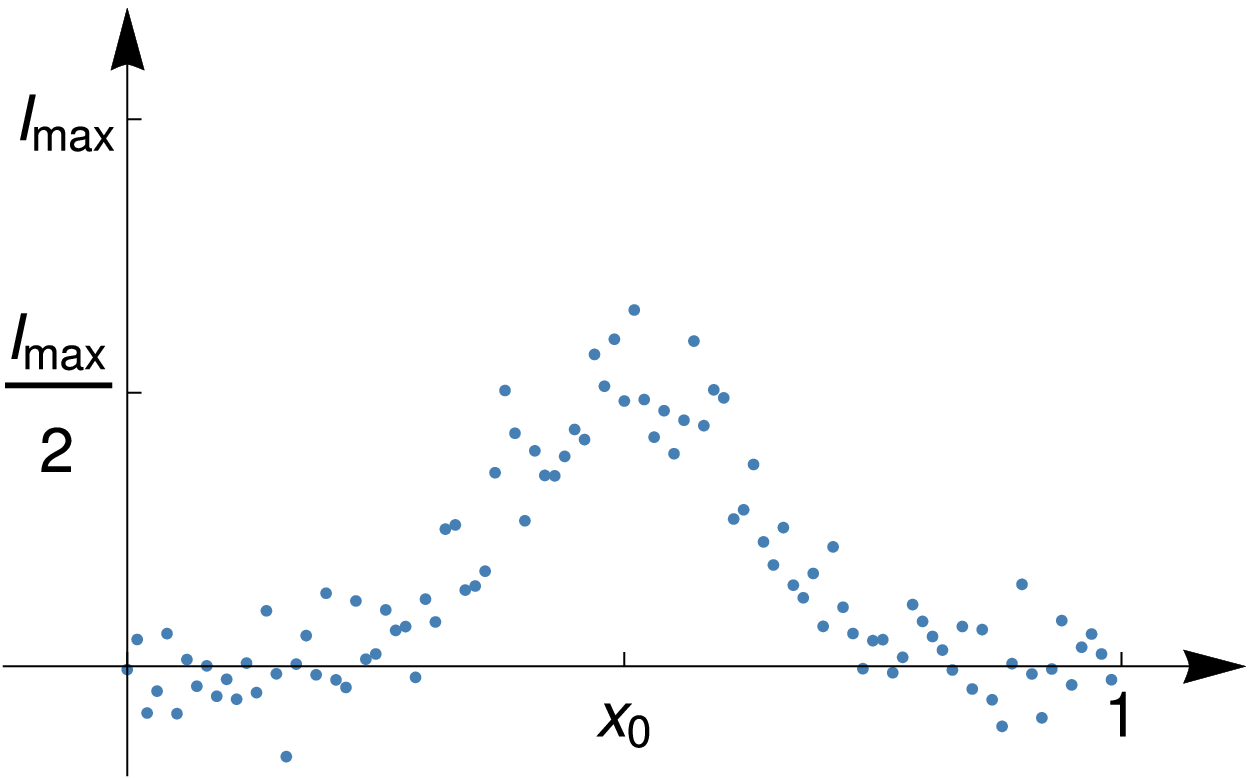}
		\end{subfigure}
		\begin{subfigure}{.49\textwidth}
			\centering
			\includegraphics[width=.95\linewidth]{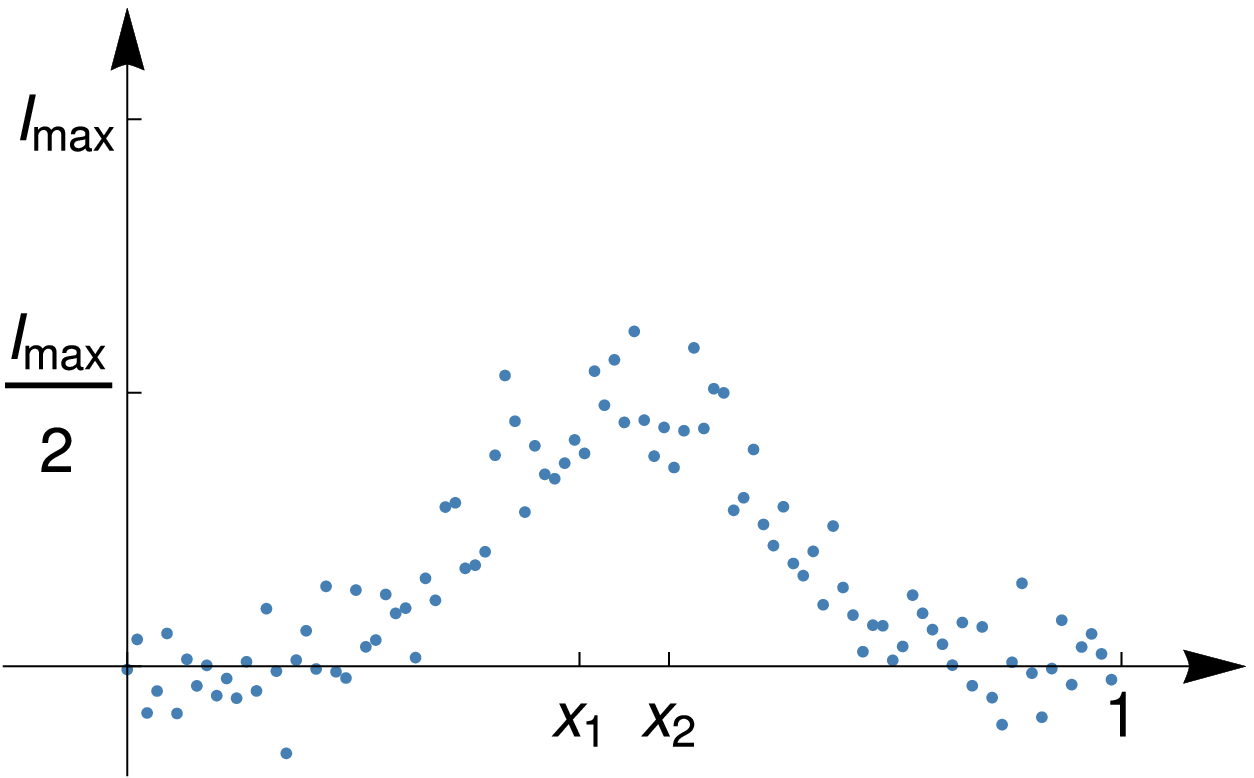}
		\end{subfigure}\vspace{2mm}
		\begin{subfigure}{.49\textwidth}
			\centering
			\includegraphics[width=.95\linewidth]{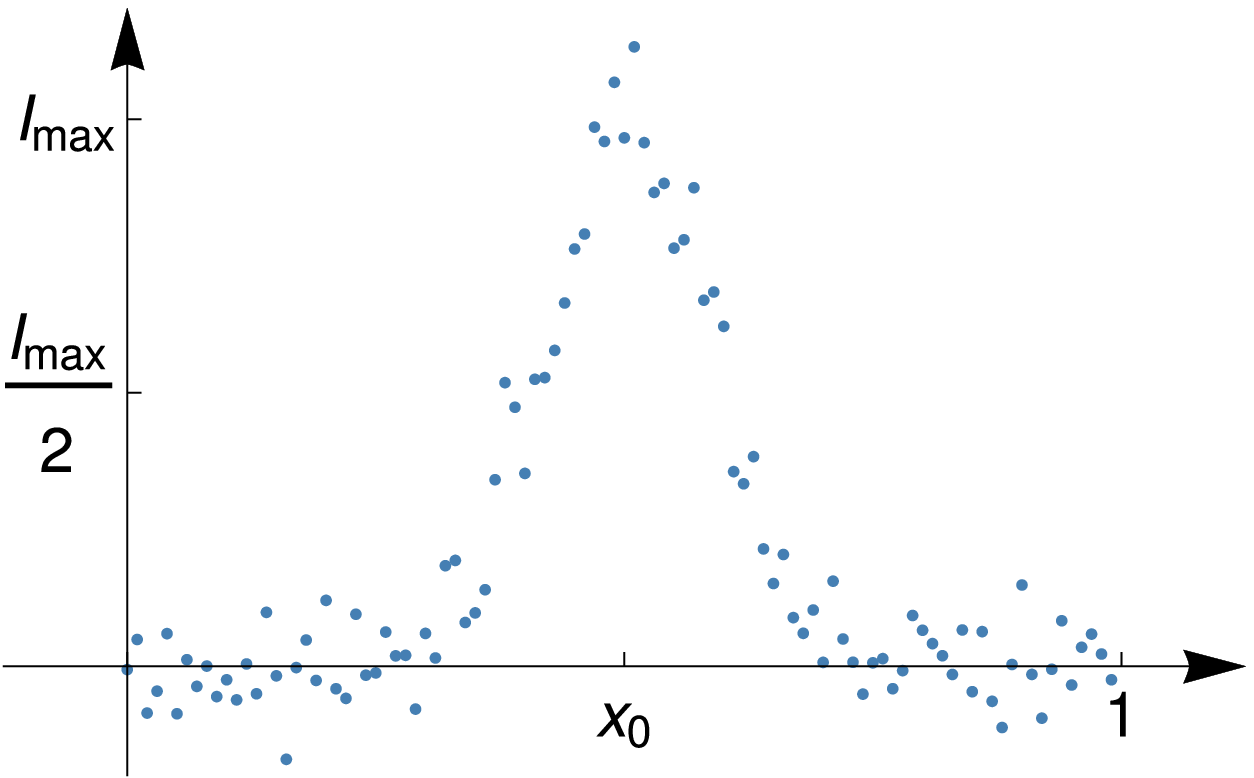}
		\end{subfigure}
		\begin{subfigure}{.49\textwidth}
			\centering
			\includegraphics[width=.95\linewidth]{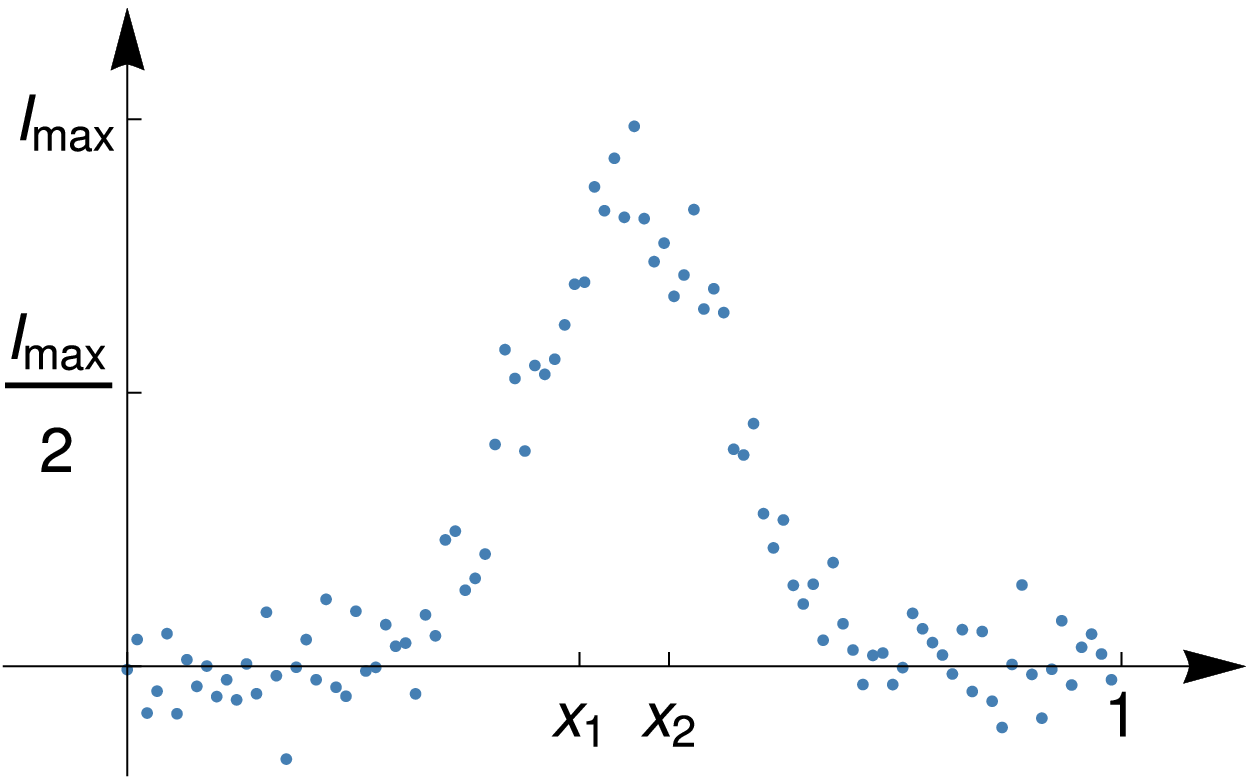}
		\end{subfigure}
		\caption{Resolution as a statistical testing problem in one dimension. First row: On the left hand side the hypothesis with the psf centered at $x_0$, on the right hand side the alternative with two psfs centered at $x_1$ and $x_2$, distance $d < \fwhm$ apart. Second, third and fourth rows: The corresponding observational data generated according to the Poisson, VSG and HG models, respectively.}
		\label{fig:testProblem}
	\end{figure}
	
	\begin{remark}\label{rmk:higherDimensions}
		Note that in practice, the hypothesis testing problem \eqref{eq:H0vsH1} occurs in multiple dimensions (depending on the observational setup). However, if $x_0, x_1, x_2 \in \left[0,1\right]^m$, $m \ge 1$, the statistically most difficult situation, independently of the (spatial) dimension $m$, is if all three points fall on a line, as otherwise the distributions under $H_0$ and $H_1$ would not have the same center of mass. Therefore, the whole problem can essentially be reduced to the one-dimensional problem of testing on this line.
	\end{remark}
	
	A \textit{(randomized) statistical test} for the hypothesis testing problem \eqref{eq:H0vsH1} is a measurable map $\Phi_n:\reals^n\to [0,1]$, $n\in\nat$, where $\Phi_n(Y) = p$ for $(y_1,\ldots, y_n) = Y$ means that we reject the null hypothesis with probability $p$. Each statistical test can make a \textit{type I error} when the hypothesis is falsely rejected with probability $\E  {H_0}{\Phi_n(Y)}$, and a \textit{type II error} when the hypothesis is falsely accepted with probability $1-\E  {H_1}{\Phi_n(Y)}$.
	
	As for the locations, the moment $x_0,x_1$ and $x_2$ are fixed, $H_0$ vs. $H_1$ in \eqref{eq:H0vsH1} constitutes a simple hypothesis vs. a simple alternative testing problem. Thus, according to the Neyman-Pearson lemma \citep{Lehmann2005} for a fixed $n$ and a fixed significance level $\alpha$, the \textit{likelihood ratio test} (LRT) for $H_0$ vs. $H_1$ is uniformly most powerful, i.e. no other statistical test can perform better. For our model \eqref{eq:model}, the LRT $\Phi_n \colon \reals^n \to [0, 1]$ takes the form
	\begin{equation}\label{eq:LRT}
	\Phi_n (Y) = \begin{cases}
	1		\quad \text{if}\;\, T_n(Y) > q^*_{\alpha,n},\\
	\gamma	\quad \text{if}\;\, T_n(Y)=q^*_{\alpha,n},     \\
	0		\quad \text{if}\;\, T_n(Y) < q^*_{\alpha,n},
	\end{cases}
	\end{equation}
	with the log likelihood ratio statistic $T_n(Y)$ given in terms of the probability mass functions or densities $f_{t\theta}$ of $F_{t\theta}$ by
	\begin{equation}\label{eq:LRTstat}
	T_n\left(Y\right) 
	=
	\log\left(\frac{\prod_{i=1}^{n}f_{tp_{1i}}}{\prod_{i=1}^{n}f_{tp_{0i}}}\right)
	=
	\sum_{i=1}^{n}\log\left(\frac{f_{tp_{1i}}}{f_{tp_{0i}}}\right),
	\end{equation}
	which then has to be determined according to the models \textbf{(P)}--\textbf{(HG)} from \Cref{sec:statistics}.
	Here and in what follows we abbreviate the detection probabilities in the $i$th bin by
	\begin{equation} \label{eq:p0iAbbrevation}
	p_{0i} := \int_{(i-1)/n}^{i/n}h(x-x_0)\, \mathrm{d}x
	\end{equation}
	under the hypothesis $H_0$ and
	\begin{equation} \label{eq:p1iAbbrevation}
	p_{1i} := \frac{1}{2}  \int_{(i-1)/n}^{i/n}h(x-x_1)\,\mathrm{d}x + \frac{1}{2}  \int_{(i-1)/n}^{i/n}h(x-x_2)\, \mathrm{d}x
	\end{equation}
	under the alternative $H_1$.
	
	Given a significance level $\alpha \in \left(0,1\right)$, the threshold $q^*_{\alpha,n}$ and the constant $\gamma$ in \eqref{eq:LRT} have to be chosen such that $\E  {H_0}{\Phi_N(Y)} = \Prob{H_0}{T_n(Y) > q^*_{\alpha,n}} + \gamma \Prob{H_0}{T_n(Y) = q^*_{\alpha,n}} = \alpha$, as this ensures $\alpha$ to be the level (i.e. the probability of the type I error) of the test.
	
	\subsection{Statistical resolution}\label{sec:statisticalResolution}
	In the following we adopt a minimax testing point of view. To this end, we begin by determining which choice of $x_1$ and $x_2$ in \eqref{eq:H0vsH1} is the most difficult to detect.
	
	\begin{theorem}\label{thm:symmetricallyPlacedHardest}
		Consider the testing problem \eqref{eq:H0vsH1} for $x_0 = 1/2$ fixed. Assume that the psf $h$ is even. \new{Fix} $0 < \alpha < 1/2$ and consider the asymptotic regime that $t, n\to\infty$ and $d\to0$. Then for each of the three models defined in \Cref{model:poisson} the uniformly most powerful test $\Psi^*$ (and hence the LRT) for \eqref{eq:H0vsH1} with asymptotic level $\alpha$ (i.e. $\E{H_0}{\Psi^*(Y)} \to \alpha$) has the asymptotically smallest power $\E{H_1}{\Psi^*(Y)}$ when 
		\[
		x_0 = \frac{x_1 + x_2}{2},
		\]
		i.e. when $x_1$ and $x_2$ are placed symmetrically around $x_0$.
	\end{theorem}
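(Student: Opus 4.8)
The power of the likelihood ratio test \eqref{eq:LRT} for the simple--simple problem \eqref{eq:H0vsH1} depends on the placement of $x_1,x_2$ only through the joint law of $(Y_i)$, and the plan is to show that, asymptotically, it is a strictly increasing function of a single scalar \emph{separation functional} $\mathcal F(x_1,x_2)$; the symmetric placement is then identified as the minimiser of $\mathcal F$ by a reflection argument in which the evenness of $h$ and the choice $x_0=1/2$ are exactly what is needed.

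\textbf{Step 1: reduction to a separation functional.} For the homogeneous and variance-stabilised Gaussian models the log-likelihood ratio $T_n$ of \eqref{eq:LRTstat} is an affine function of $(Y_i)$, hence exactly $\mathcal N(\mp\tfrac12\sigma_n^2,\sigma_n^2)$ under $H_0/H_1$, and a direct Neyman--Pearson computation gives that the level-$\alpha$ LRT has power $\Phi(\sigma_n-z_{1-\alpha})$, with $\Phi$ the standard normal c.d.f., $z_{1-\alpha}$ its quantile, and $\sigma_n^2=t^2\sum_i(p_{1i}-p_{0i})^2$ (HG), $\sigma_n^2=4t\sum_i(\sqrt{p_{1i}}-\sqrt{p_{0i}})^2$ (VSG). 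For the Poisson model $T_n=\sum_i\big((Y_i-tp_{0i})\log(p_{1i}/p_{0i})\big)+\text{const}$ is a sum of independent centred terms; since $d\to0$ forces $p_{1i}/p_{0i}\to1$, the Lindeberg--Feller condition holds (the maximal per-bin variance is $o(1)$ of the total as $n\to\infty$ with $h$ bounded, and the Poisson tails are light), so $T_n$ is asymptotically $\mathcal N(\mp\mathcal K_n,2\mathcal K_n)$ with $\mathcal K_n=\sum_i\mathrm{KL}(\Poi(tp_{0i})\,\Vert\,\Poi(tp_{1i}))$, giving power $\Phi(\sqrt{2\mathcal K_n}-z_{1-\alpha})+o(1)$ and $2\mathcal K_n=t\sum_i(p_{1i}-p_{0i})^2/p_{0i}\cdot(1+o(1))$ (the same leading order as VSG, via $(\sqrt{p_{1i}}-\sqrt{p_{0i}})^2=(p_{1i}-p_{0i})^2/(\sqrt{p_{1i}}+\sqrt{p_{0i}})^2$). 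Since $x\mapsto\Phi(x-z_{1-\alpha})$ is strictly increasing, in every model minimising the power over admissible $(x_1,x_2)$ is equivalent to minimising
\[
\mathcal F(x_1,x_2)=\kappa_n\sum_{i=1}^n w_i\,(p_{1i}-p_{0i})^2,
\]
where $\kappa_n\in\{t,t^2\}$ does not depend on the configuration and $w_i$ is either $\equiv1$ or $\asymp1/p_{0i}$; because $h$ is even and $x_0=1/2$, the reflection $x\mapsto1-x$ maps $B_i$ onto $B_{n+1-i}$ and $p_{0i}=p_{0,n+1-i}$, so $w_i=w_{n+1-i}$ up to asymptotically negligible corrections.

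\textbf{Step 2: reflection symmetry and the vanishing cross term.} Parametrise the alternative by the midpoint $c=\tfrac12(x_1+x_2)$ with $d$ fixed, so $p_{1i}=p_{1i}(c)$ and the symmetric configuration is $c=x_0$; write $c=x_0+\delta$. Substituting $x\mapsto1-x$ and using that $h$ is even gives $p_{1,n+1-i}(x_0-\delta)=p_{1i}(x_0+\delta)$, hence $\mathcal F(x_0+\delta)=\mathcal F(x_0-\delta)$: the separation functional is an even function of $\delta$, so $c=x_0$ is automatically a critical point. For minimality, Taylor-expand using smoothness of $h$: with $a_i:=\int_{B_i}h'(x-x_0)\diff x$ and $b_i:=\int_{B_i}h''(x-x_0)\diff x$,
\[
p_{1i}(x_0+\delta)-p_{0i}=-\delta\,a_i+\big(\tfrac{\delta^2}{2}+\tfrac{d^2}{8}\big)b_i+(\text{higher order}),
\]
where, since $h'$ is odd and $h''$ even, $a_{n+1-i}=-a_i$ (anti-symmetric) and $b_{n+1-i}=b_i$ (symmetric). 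Squaring and summing against the symmetric weights $w_i$, the cross term $\sum_i w_i a_i b_i$ runs over an anti-symmetric summand and therefore \emph{vanishes}, leaving
\[
\mathcal F(x_0+\delta)-\mathcal F(x_0)=\kappa_n\Big(\delta^2\sum_i w_i a_i^2+\big(\tfrac{\delta^2 d^2}{8}+\tfrac{\delta^4}{4}\big)\sum_i w_i b_i^2+(\text{higher order})\Big)\ \ge\ 0,
\]
with strict inequality for $\delta\neq0$ once $d$ is small (the leading coefficient $\sum_i w_i a_i^2$ is positive as $h'\not\equiv0$). Finally, for midpoints $c$ bounded away from $x_0$ one has $p_{1i}(c)-p_{0i}\not\to0$ on bins of non-vanishing total measure, so $\mathcal F(c)/\mathcal F(x_0)\to\infty$; combined with the local expansion this makes $c=x_0$ the asymptotic global minimiser of $\mathcal F$, and by Step 1 it yields the asymptotically smallest power, which is the claim.

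\textbf{Main obstacle.} The conceptual heart is the identity $\sum_i w_i a_i b_i=0$: this is exactly where ``$h$ even'' and ``$x_0=1/2$'' are used, and it is what forces the symmetric configuration to be extremal; this part is short. The genuinely technical work sits in Step 1 --- establishing the Gaussian-limit power formula \emph{uniformly} over the configurations under consideration, in particular verifying the Lindeberg--Feller condition for the Poisson log-likelihood ratio in the regime where $t\to\infty$, $n\to\infty$ and $d\to0$ act simultaneously, and controlling the $o(1)$ remainders (including the mild $c$-dependence of the VSG weights) tightly enough that ``the configuration with smallest asymptotic power'' is a well-defined notion.
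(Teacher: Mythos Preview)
Your proposal is correct and follows essentially the same strategy as the paper: reduce the power to a strictly increasing function of a scalar signal--strength functional and show this functional is minimised at the symmetric configuration because the first--order cross term vanishes by parity of $h$. The packaging differs slightly. You exploit the exact discrete reflection symmetry of the bins ($a_{n+1-i}=-a_i$, $b_{n+1-i}=b_i$, $w_{n+1-i}=w_i$) to annihilate $\sum_i w_i a_i b_i$ already at finite $n$, whereas the paper first passes to the continuous limit and then uses that $\int_0^1 h'(x-\tfrac12)h''(x-\tfrac12)\,\mathrm{d}x=0$ (respectively $\int_0^1 h'h''/h=0$ in the Poisson/VSG case). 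Your reflection argument is a touch cleaner and you also address global minimality (midpoints bounded away from $x_0$), which the paper leaves implicit. For the Poisson model the paper does not invoke Lindeberg--Feller uniformly as you do; it splits into two parameter regimes, one handled by Le Cam asymptotic equivalence with the VSG model ($t\gg\sqrt{n}\log^8 n$) and one by a direct CLT requiring $n=t^{1/2+\delta}$, and redoes the CLT computation with the offset $\lambda=x_0-\tfrac12(x_1+x_2)$ present. You correctly flag this uniform CLT as the place where the genuine technical work sits.
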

	
	
	With the above preparations in mind, we now propose the following definition for the resolution of a microscope:
	\begin{definition}[Statistical resolution of a microscope]\label{def:statResolution}
		Let $Y=\left(Y_i\right)_{i \in \left\{1,...,n\right\}}$ be as in \eqref{eq:model} and let $\psf$ be the point spread function of the microscope under investigation. Choose one of the three models Poisson, VSG or HG. Let $0<\alpha,\beta<1/2$, $x_0 \in \left[0,1\right]$, $t\in\nat$ and $n\in\nat$ be fixed. 
		We define the microscope's \textit{statistical resolution at point $x_0$, discretization $n$, exposure time $t$, type I error $\alpha$ and type II error $\beta$ under the prescribed model} as the unique value $d\in(0,1)$ such that the uniformly most powerful test (and hence the LRT \eqref{eq:LRT}) $\Psi^*$ for \eqref{eq:H0vsH1} with $x_1$ and $x_2$ chosen such that $d = \left|x_1 - x_2\right|$ and $x_0 = \frac{1}{2}(x_1 + x_2)$ has exactly level $\alpha$ and power $1-\beta$, i.e. \new{it} satisfies
		\[
		\E{H_0}{\Psi^*(Y)} = \alpha \qquad\text{and}\qquad \E{H_1}{\Psi^*(Y)} = 1-\beta.
		\]		
	\end{definition}
	
	In other words, if the distance $d$ between the two sources $x_1$ and $x_2$ in \eqref{eq:H0vsH1} satisfies $\left|x_1 - x_2\right| = d$, the statistical resolution is determined by the best possible test with detection power $1-\beta$ while the error of incorrectly assigning two sources (when only one is present) is controlled by $\alpha$. It is immediately clear that a larger value of $d$ will result in larger power, and a smaller value of $d$ will result in smaller power, i.e. the power as a function of $d$ is monotonically increasing and furthermore continuous. Thus, the statistical resolution is well-defined. Moreover, for $x_1$ and $x_2$ with $\left|x_1 - x_2\right| \leq d$ no level $\alpha$ test is able to distinguish $H_0$ and $H_1$ with power $\geq 1-\beta$. Note that, doing so, the sum of errors is bounded by $\alpha+\beta$, which is why we restrict ourselves to the case $\alpha,\beta \in \left(0,\frac12\right)$. Consequently, if $\alpha = 0$ or $\beta = 0$, the resolution is infinite---no method can achieve finite resolution if one of the errors is zero. In the case $\alpha = \beta = \frac12$ the test $\Psi \sim \Bin\left(1,\frac12\right)$, hence the resolution is $0$ which corresponds to the information of a coin flip to decide between $H_0$ and $H_1$.

	The aim of this paper is to study the \emph{asymptotic} behavior (as $n,t \to \infty$ and $d \to 0$) of the statistical resolution $d = |x_2 - x_1|$ in the three models from \Cref{model:poisson} and to relate our results to the classical Abbe and Rayleigh criteria. Furthermore, we will show that the (asymptotic) behavior of $d$ serves as a good proxy in finite sample situations whenever $n$ and $t$ are sufficiently large and $d$ is sufficiently small, which might be useful for designing experiments. This is investigated in simulations presented in \Cref{sec:simulations}. 
	
	\section{Main theorem and discussion}
	\subsection{Assumptions}
	To derive the precise asymptotic behavior of the statistical resolution $d$ of a given (super-resolution) microscope, we have to pose smoothness assumptions on its psf $h$ depending on the employed model.
	
	In the HG model we require the following.
	\begin{assumption}[HG model]\label{ass:h1}
		Suppose that the psf $h$ is even and non-constant. Furthermore let $h\ge0$ and $h(\cdot - x_i) \in C^2\left[0,1\right]$ for all $i\in\{0,1,2\}$.
	\end{assumption}
	The requirement that $h \geq 0$ is natural in view of $h$ being an intensity. The differentiability condition is rather mild and clearly satisfied for the Airy pattern in \eqref{eq:airy} and its most common approximation by a Gaussian.
	
	In case of the VSG and the Poisson models, we need a stronger condition:
	\begin{assumption}[VSG and P models]\label{ass:h2}
		Suppose that the psf $h$ is even and non-constant. Furthermore let $h>0$ and $h(\cdot - x_i) \in C^4\left[0,1\right]$ for all $i\in\{0,1,2\}$.
	\end{assumption}
	
	Note that due to compactness of $\left[0,1\right]$, \Cref{ass:h2} implies that $h \geq c > 0$ for some constant $c$.
	\begin{remark}\label{rmk:backNoise}
		We emphasize that the Airy pattern in \eqref{eq:airy} does not satisfy $h > 0$. However, in accordance with many models considered in the literature it is pertinent to include so-called background contributions, i.e. photons arising from other sources than the psf \new{(cf. \Cref{rmk:generalModels})}. Examples of such modeling include \citep{Acuna1997} and \citep{Diezmann2017}, which in the notation of \eqref{eq:model} would correspond to $Y_i \sim F_{t\int_{B_i} g(x)\,\mathrm{d}x + \gamma/n}$ with a positive constant $\gamma$ and $g$ given by \eqref{eq:H0vsH1}. If we were to incorporate this background noise into the psf $h$ and hence due to \eqref{eq:convolution} into the image $g$, we would obtain \eqref{eq:model} with $\tilde g = g + \gamma > 0$. From this point of view, the assumption $h > 0$ corresponds to the natural requirement that photons can be detected everywhere. 
		We also note that a Gaussian psf on $[0,1]$ \eqref{eq:gaussKernel}, which is the most commonly used approximation to the Airy pattern (see e.g. \citep{Diezmann2017} or \Cref{fig:airyFwhmAbbeRayleigh}B), clearly satisfies \Cref{ass:h2}.
	\end{remark}

	\subsection{Main theorem}
	For two sequences $\left(a_n\right)_{n\in\nat}$ and  $\left(b_n\right)_{n \in \nat}$ we write $a_n \asymp b_n$, $a_n \ll b_n$, $a_n \gg b_n$ and $a_n \sim b_n$ if $\lim_{n \to \infty} a_n / b_n  = 1$, $\lim_{n\to\infty} a_n/b_n = 0$, $\lim_{n\to\infty} b_n/a_n = 0$ and $\lim_{n\to\infty} a_n/b_n = c$ for some constant $c>0$, respectively. Note that, due to asymptotic considerations, we may restrict to non-randomized tests in what follows, i.e. to set $\gamma = 0$ in \eqref{eq:LRT}. Recall that we consider asymptotics as $d\to0$ and $n,t\to\infty$. We are now ready to state our main result on the asymptotic behavior of $d$. 
	
	\begin{theorem}\label{thm:main}
		Assume model \eqref{eq:model} and consider the testing problem \eqref{eq:H0vsH1} with $x_0, x_1, x_2 \in \left(0,1\right)$ such that $x_0 = (x_1 + x_2)/2$ \new{and $d = \left|x_1 - x_2\right| \to 0$, $n,t \to \infty$}. \new{Let $0<\alpha,\beta<1/2$ be fixed type I and II errors, respectively.} For \new{fixed} $0 < \nu < 1$ denote by $q_{\nu}$ the $\nu$ quantile of the standard normal distribution $\mathcal N \left(0,1\right)$.
		
		\begin{description}
			
			\item[(a) Poisson model \label{thm:mainPoi}]\hfill \\
			Let the distribution in \eqref{eq:model} be given by $F_{t\theta} = \Poi\left(t\theta\right)$ and the psf $h$ satisfy \Cref{ass:h2}. Then the statistical resolution $d$ of the corresponding microscope is 
			\begin{equation}\label{eq:resolutionPoi}
			d \asymp 2\sqrt{2}\, \sqrt{q_{1-\beta}-q_{\alpha}} \left(\int_0^1 \frac{h''\left(x-x_0\right)^2}{h\left(x-x_0\right)} \diff x\right)^{-1/4} t^{-1/4}.
			\end{equation}

			\item[(b) Variance stabilized Gaussian model \label{thm:mainVSG}]\hfill \\
			Let the distribution in \eqref{eq:model} be given by $F_{t\theta} = \mathcal N \left(2\sqrt{t\theta}, 1\right)$ and the psf $h$ satisfy \Cref{ass:h2}. Then the statistical resolution $d$ of the corresponding microscope also satisfies \eqref{eq:resolutionPoi}.

			\item[(c) Homogeneous Gaussian model \label{thm:mainGauss}]\hfill \\
			Let the distribution in \eqref{eq:model} be given by $F_{t\theta} = \mathcal N \left(t\theta, 1\right)$, $n=o\left(t^2\right)$ and the psf $h$ satisfy \Cref{ass:h1}. Then the statistical resolution $d$ of the corresponding microscope is 
			\begin{equation}\label{eq:resolutionGauss}
			d \asymp 2\sqrt{2}\,\sqrt{q_{1-\beta}-q_{\alpha}} \left(\int_0^1 \psf''\left(x-x_0\right)^2 \diff x\right)^{-1/4} t^{-1/2}\, n^{1/4}.
			\end{equation}
		\end{description}
		
	\end{theorem}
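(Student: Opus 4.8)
The plan is to reduce each of the three cases to the asymptotics of a single scalar quantity — the expectation and variance of the log-likelihood ratio statistic $T_n(Y)$ under $H_0$ and $H_1$ — and then to invoke a central limit theorem for $T_n$ to locate the threshold $q^*_{\alpha,n}$ and compute the power. Since $T_n$ is a sum of independent (non-identically distributed) terms by \eqref{eq:LRTstat}, the key analytic object is the per-bin log-likelihood ratio $\ell_i := \log(f_{tp_{1i}}/f_{tp_{0i}})$. The first step is to Taylor-expand $\ell_i$ in the small quantity $p_{1i} - p_{0i}$. By \eqref{eq:p0iAbbrevation}–\eqref{eq:p1iAbbrevation} and a Taylor expansion of $h$ around $x_i$ (using that $x_1,x_2$ are symmetric about $x_0$ and $d\to0$), the first-order term in $d$ vanishes by symmetry and evenness of $h$, so that $p_{1i}-p_{0i} = \tfrac{d^2}{8}\int_{B_i} h''(x-x_0)\diff x + o(d^2)$ uniformly in $i$. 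This is the mechanism that produces the $d^2$ (hence the fourth-root) scaling.

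Next I would compute, model by model, the mean and variance of $T_n$. For the Poisson model, $\ell_i = Y_i\log(p_{1i}/p_{0i}) - t(p_{1i}-p_{0i})$, and a second-order expansion gives $\E{H_0}{T_n} \approx -\tfrac{t}{2}\sum_i (p_{1i}-p_{0i})^2/p_{0i}$, $\E{H_1}{T_n} \approx +\tfrac{t}{2}\sum_i (p_{1i}-p_{0i})^2/p_{0i}$, with the variance under both hypotheses asymptotically $\sigma_n^2 := t\sum_i (p_{1i}-p_{0i})^2/p_{0i}$ to leading order (the higher-order terms in the expansion being negligible because \Cref{ass:h2} gives $h\ge c>0$ and $h\in C^4$, controlling third derivatives). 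Using the expansion of $p_{1i}-p_{0i}$ above and a Riemann-sum argument as $n\to\infty$, $\sum_i (p_{1i}-p_{0i})^2/p_{0i} \asymp \tfrac{d^4}{64}\cdot\tfrac1n\int_0^1 h''(x-x_0)^2/h(x-x_0)\diff x \cdot n = \tfrac{d^4}{64}\int_0^1 h''(x-x_0)^2/h(x-x_0)\diff x$ — note the bin-width factors cancel, which is exactly why the Poisson rate does not pick up an $n$-dependence. A Lyapunov/Lindeberg CLT for the triangular array $(\ell_i)$ then shows $(T_n - \E{H_k}{T_n})/\sigma_n \Rightarrow \mathcal N(0,1)$ under $H_k$, $k=0,1$. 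The level-$\alpha$ constraint forces $q^*_{\alpha,n} = -\tfrac12\sigma_n^2 + q_{1-\alpha}\sigma_n + o(\sigma_n)$, and imposing power $1-\beta$ yields $\tfrac12\sigma_n^2 = q_{1-\beta}\sigma_n - q_\alpha\sigma_n + o(\sigma_n)$ (using $q_{1-\alpha}=-q_\alpha$), i.e. $\sigma_n \asymp 2(q_{1-\beta}-q_\alpha)$. Solving $\tfrac{t d^4}{64}\int_0^1 h''(x-x_0)^2/h(x-x_0)\diff x \asymp 4(q_{1-\beta}-q_\alpha)^2$ for $d$ gives exactly \eqref{eq:resolutionPoi}. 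For the VSG model the observation is $\mathcal N(2\sqrt{tp_{1i}},1)$ vs.\ $\mathcal N(2\sqrt{tp_{0i}},1)$, so $\ell_i$ is affine-Gaussian and $\sigma_n^2 = \sum_i(2\sqrt{tp_{1i}}-2\sqrt{tp_{0i}})^2$; a Taylor expansion of the square root gives $2\sqrt{tp_{1i}}-2\sqrt{tp_{0i}} \approx \sqrt{t}(p_{1i}-p_{0i})/\sqrt{p_{0i}}$ to leading order, so $\sigma_n^2$ agrees with the Poisson one asymptotically — hence the same resolution. For the HG model, $\ell_i$ is affine-Gaussian with $\sigma_n^2 = t^2\sum_i (p_{1i}-p_{0i})^2$; now the bin widths do not cancel — $\sum_i (p_{1i}-p_{0i})^2 \asymp \tfrac{d^4}{64}\cdot\tfrac1n\int_0^1 h''(x-x_0)^2\diff x$ — so solving $t^2 \tfrac{d^4}{64 n}\int_0^1 h''(x-x_0)^2\diff x \asymp 4(q_{1-\beta}-q_\alpha)^2$ gives \eqref{eq:resolutionGauss}, and the hypothesis $n=o(t^2)$ is what guarantees $d\to0$.

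The main obstacle I anticipate is making the CLT for $T_n$ rigorous and uniform: $(\ell_i)$ is a triangular array whose individual summands are themselves shrinking, the number of nonnegligible bins grows with $n$, and one must verify a Lyapunov condition while simultaneously controlling the remainder terms in the Taylor expansion of $\ell_i$ (for Poisson, the $\log(p_{1i}/p_{0i})$ factor and the Poisson moments beyond second order; for HG/VSG this is cleaner since everything is genuinely Gaussian). One has to show these remainders are $o(\sigma_n)$ after summation — this is where \Cref{ass:h2}'s requirements ($h\in C^4$, $h>0$ hence $h\ge c>0$) and the coupling $d=d_t\to0$, $n=n_t\to\infty$ enter decisively; the $C^4$ smoothness is needed to control a fourth-order Taylor term that otherwise competes with the leading $d^4$. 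A secondary technical point is the monotonicity and continuity of the power in $d$ (asserted after \Cref{def:statResolution}), which guarantees that the implicitly defined $d$ is unique and that the asymptotic equivalence $\asymp$ transfers from the balance equation to the resolution itself. Once the CLT and remainder bounds are in place, the rest is the bookkeeping sketched above.
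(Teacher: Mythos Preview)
For the two Gaussian models your outline matches the paper's proof: the LRT statistic is exactly Gaussian under both hypotheses, so no CLT is needed; one computes $\mu_n$ (resp.\ $\nu_n$), Taylor-expands $p_{1i}-p_{0i} = \tfrac{d^2}{8}\int_{B_i}h'' + o(d^2/n)$, and passes to a Riemann integral via a sum-to-integral lemma.

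For the Poisson model your plan differs substantively from the paper. You propose a single Lyapunov/Lindeberg CLT valid for all joint asymptotics of $(t,n)$. The paper instead argues in two overlapping regimes: (i) when $n = t^{1/2+\delta}$ for some $\delta>0$, a Lindeberg--Feller CLT for $\sum_i Y_i\log(p_{1i}/p_{0i})$ is verified directly, and that verification makes explicit use of the relation between $n$ and $t$ to show that one tail of the Lindeberg truncation region is eventually empty; (ii) when $t \gg \sqrt{n}\log^8 n$, the paper bypasses the CLT entirely and invokes Le Cam asymptotic equivalence of the Poisson and VSG experiments (via Grama--Nussbaum and Ray--Schmidt-Hieber), reducing to the already-proven VSG case. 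A uniform CLT over all $(t,n)$ may well be provable, but the paper's Lindeberg bound as written does not extend to the regime where $t/n$ is large (each $\lambda_{0i}\sim t/n\to\infty$ and the tail analysis changes character), so you would need either a genuinely different Lyapunov verification there or, as the paper does, the Le Cam shortcut. You correctly flag the CLT as the main obstacle but underestimate that it may not go through in one unified stroke; the regime split is the missing structural idea.

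One minor bookkeeping slip: your balance equation should give $\sigma_n = q_{1-\beta}-q_\alpha$, not $2(q_{1-\beta}-q_\alpha)$. The mean shift between $H_0$ and $H_1$ is $\sigma_n^2$, so the standardized shift is $\sigma_n$, and equating this to $q_{1-\alpha}-q_\beta$ yields $\sigma_n^2 = \tfrac{td^4}{64}\int_0^1 (h'')^2/h = (q_{1-\beta}-q_\alpha)^2$, from which the constant $2\sqrt{2}$ follows directly.
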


	\begin{remark}
		The assumption $n=o(t^2)$ for the HG model is necessary to get $d\searrow0$ asymptotically as $t,n\to\infty$. This assumption is not restrictive for modern microscopy---in most modern experiments there is at least one photon per pixel \citep{Diezmann2017} already from the background, i.e. $t\ge n$ seems natural. 
	\end{remark}
	
	\begin{remark}\label{rmk:mainThmWithBackNoise}
		\new{In case of constant background noise $\gamma > 0$ (\Cref{rmk:generalModels,rmk:backNoise}) and thinning with factor $0<\eta\le1$ (\Cref{rmk:generalModels}), we get
			\begin{equation}\label{eq:resolutionPoiThinned}
			d \asymp 2\sqrt{2}\, \sqrt{q_{1-\beta}-q_{\alpha}} \left(\eta\int_0^1 \frac{h''\left(x-x_0\right)^2}{h\left(x-x_0\right)+\gamma} \diff x\right)^{-1/4} t^{-1/4}	
			\end{equation}
			for \eqref{eq:resolutionPoi} and
			\begin{equation}\label{eq:resolutionHGthinned}
			d \asymp 2\sqrt{2}\,\sqrt{q_{1-\beta}-q_{\alpha}} \left(\eta^2\int_0^1 \psf''\left(x-x_0\right)^2 \diff x\right)^{-1/4} t^{-1/2}\, n^{1/4}
			\end{equation}}
		\new{for \eqref{eq:resolutionGauss}.}
		
		\new{From \eqref{eq:resolutionHGthinned} we see that possible background noise does not play any role in the HG model, thereby showing that the HG model is too simple for describing resolution accurately. The dependence on $\gamma$ in other models is as expected: the larger the background noise $\gamma$, the larger (i.e. poorer) the resolution $d$ at a scaling rate of $\gamma^{-1/4}$.}
		
		\new{As for for the thinning $\eta$, we see that whenever $\eta < 1$, this effectively reduces the illumination time precisely by the same factor, which agrees with intuition of the thinning factor as the probability of a photon detection in the sensing device.}
	\end{remark}
	
	\subsection{Strategy of the proof}
	
	Let us briefly comment on the techniques employed in the proof of \Cref{thm:main} \new{presented in \Cref{sec:mainResults}}. In both Gaussian models, the level and power of the LRT can be computed explicitly. The formulas \eqref{eq:resolutionPoi} and \eqref{eq:resolutionGauss} are then derived by straightforward approximations of integrals by sums as $t,n\to\infty$ and $d\to0$. In the Poisson model, the analysis is more difficult, as the LRT statistic consists of $n$ weighted Poisson random variables of varying intensity which might tend to any value in $\left[0,\infty\right]$ depending on the asymptotic relation between $t$ and $n$. We prove a CLT for the LRT statistic in case of $t \ll n^{2-\delta}$ for some constant $\delta >0$. If $t \gg \sqrt{n}\log^8 n$, we can exploit recent results from \citep{Ray2018} stating that the Poisson model is asymptotically equivalent in the Le Cam sense to the VSG model and hence \eqref{eq:resolutionPoi} holds true. Hence, both regimes together cover the whole parameter space. Note that in the overlapping regime there is no contradiction, since in both regimes we get the same asymptotic statistical resolution.
	
	\subsection{Physical implications}\label{sec:physicalImplications}
	Since in most microscopy experiments type I and type II errors are of equal importance, for the rest of this section we set the type I and II errors to be equal $\beta = \alpha$.
	To understand the experimental implications of \Cref{thm:main}, recall that for many (super-resolution) microscopes the psf can be well approximated by a Gaussian kernel
	\begin{equation}\label{eq:gaussKernel}
	h\left(x-x_0\right) = \frac{1}{\sqrt{2 \pi \sigma^2}} \exp\left(-\frac{1}{2 \sigma^2} \left( x- x_0\right)^2\right)
	\end{equation}
	centered at $x_0$ with variance $\sigma^2>0$, see \Cref{fig:airyFwhmAbbeRayleigh}B for an illustration. In this case, 
	\begin{equation*}
	\fwhm = 2\sqrt{2 \log 2}\, \sigma \approx 2.355\, \sigma
	\end{equation*}
	and setting $x_0 = 1/2$, we get
	\begin{align*}
	\int_0^1 \psf''\left(x-x_0\right)^2 \diff x
	&= 
	\frac{6 \sqrt{\pi } \sigma^3 \erf\left(\frac{1}{2\sigma }\right)
		+
		e^{-\frac{1}{4\sigma ^2}} \left(2\sigma ^2-1\right)}{16 \pi  \sigma ^8}
	=
	\frac{3}{8} \pi^{-1/2}\erf\left(\frac{1}{2\sigma}\right) \sigma^{-5} + o\left(\sigma^{-5}\right)\\
	&=
	\frac{3}{8} \pi^{-1/2} \sigma^{-5} + o\left(\sigma^{-5}\right)
	,\\
	\int_0^1 \frac{h''(x-x_0)^2}{h(x-x_0)}\, \mathrm{d} x 
	&=
	\frac{2\erf\left(\frac{1}{2\sqrt{2}\sigma}\right)}{\sigma^4}
	-
	\frac{e^{-\frac{1}{8 \sigma^2}} \left(4\sigma^2+1\right)}{4\sqrt{2\pi}
		\sigma^7} 
	=
	2\erf\left(\frac{1}{2\sqrt{2}\sigma}\right)\sigma^{-4} + o\left(\sigma^{-4}\right)\\
	&=
	2\sigma^{-4} + o\left(\sigma^{-4}\right),
	\end{align*}
	as $\sigma \searrow 0$ with the error function
	\[
	\erf(x) 
	=
	\frac{1}{\sqrt{\pi}}\int_{-x}^{x} e^{-t^2}\, \mathrm{d}t 
	= 2\Phi \left(\sqrt{2}x\right) - 1 . 
	\]
	
	Thus, according to \eqref{eq:resolutionGauss} we obtain in the \textbf{homogeneous Gaussian model} the following asymptotic behavior for the statistical resolution:
	\begin{align}\label{eq:detectionBoundaryHomGauss}
	d &\asymp  \frac{8\pi^{1/8}}{6^{1/4}}
	\sqrt{q_{1-\alpha}}\, 
	\frac{n^{1/4}}{\sqrt{t}}\sigma^{5/4}   
	= \frac{2^{7/8}\pi^{1/8}}{3^{1/4}(\log 2)^{5/8}}
	\sqrt{q_{1-\alpha}}\frac{n^{1/4}}{\sqrt{t}}\fwhm^{5/4}. 
	\end{align}
	Note that this is \emph{not} in agreement with the previously discussed FWHM resolution criterion \eqref{eq:fwhmLimit}, which postulates a linear dependency of $d$ on the FWHM, see also \citep{Egner2020} or \citep{Dekker1997}. From this point of view it becomes evident that the homogeneous Gaussian model is statistically too simple to capture the actual difficulty of the practical experiment. 
	
	In contrast, in the \textbf{variance stabilized Gaussian}, and \textbf{Poisson} models we compute
	\begin{equation}\label{eq:detectionBoundaryPoisson}
	d \asymp 2^{7/4}\sqrt{q_{1-\alpha}}\, t^{-1/4} \sigma 
	= \frac{2^{1/4}}{\sqrt{\log 2}} \sqrt{q_{1-\alpha}}\, t^{-1/4} \fwhm,
	\end{equation}
	which shows in fact a linear dependency of $d$ on the FWHM in good agreement with the criteria discussed in \Cref{sec:diffraction}. We summarize these results in \Cref{tab:resolution}. To interpret the results, let us look at the FWHM values in $[0.1,0.5]$. This interval is well-justified since in practice, e.g. for STED microscopes, the resolution is around $50\,\text{nm}$ \citep{Hell2007,Egner2020} and for measuring a single molecule, the field of view would naturally be restricted to a region of around $100 - 500 \,\text{nm}$. For such FWHM values the ratio between the resolutions of \eqref{eq:detectionBoundaryHomGauss} and \eqref{eq:detectionBoundaryPoisson} lies in the interval $[0.795 n^{1/4} t^{-1/4}, 1.19 n^{1/4} t^{-1/4}]$  with it being equal if $\fwhm\approx 0.250t/n$. Therefore, if $t=n$, then the difference between the homogeneous Gaussian and other models' resolution is $\approx\pm 20\%$. The difference is larger if the discretization $n$ is greater than the illumination time $t$ and vice versa. Moreover, if $n\ge 2.57 t$, then the resolution in the homogeneous Gaussian model is always larger than in the other models and hence too pessimistic for short illumination times. It is always smaller if $n \le 0.498 t$ and thus is too optimistic for long illumination times. 
	
	Even though we have argued before that $t\geq n$ is a natural assumption due to the background contributions, the case $n \geq t$ is especially interesting in super-resolution microscopy if the background is neglected. In two-dimensional experiments, it is common to scan with bin-sizes of $10 \times 10\, \text{nm}$, which for a single molecule requires around $10 \times 10$ bins. For modern dyes, the number of expected photons from one marker can be around $500$ in a standard confocal experiment, but in super-resolutions setups, this number can be considerably smaller due to the smaller region of excitation, e.g. around $50-100$. Hence, in our one-dimensional explanatory setup, values of around $10$ for $n$ and $7-10$ for $t$ are realistic when considering super-resolution setups without background.
	
	\bgroup
	\def\arraystretch{1.3}
	\begin{table}[h]
		\begin{center}
			\caption{
				Limiting asymptotic statistical resolution as given by \Cref{thm:main} for the Gaussian psf \eqref{eq:gaussKernel}.\label{tab:resolution} For ease of comparison, here we have set $n=t$ in the homogeneous Gaussian model.}
			\begin{tabular}{|l|l l l|}\hline
				\def\arraystretch{1.25}
				\diagbox[width=13em]{\textbf{Model}}{\textbf{Error} $\alpha=\beta$}&
				$0.01$   & $0.05$  & $0.1$ \\ \hline
				Homogeneous Gaussian & $3.08\, t^{-1/4}\fwhm^{5/4}$ & $2.59\, t^{-1/4}\fwhm^{5/4}$ & $2.29\, t^{-1/4} \fwhm^{5/4}$        \\ 
				VSG / Poisson        & $2.18\, t^{-1/4} \fwhm$ & $1.83\, t^{-1/4} \fwhm$ & $1.62\, t^{-1/4} \fwhm$ \\ \hline
			\end{tabular}
		\end{center}
	\end{table}
	\egroup
	
	
	Once the value of $t$ has been fixed, the asymptotic statistical resolution \eqref{eq:detectionBoundaryPoisson} allows to compare our results to the classical resolution limits of Abbe \eqref{eq:abbeLimit} and Rayleigh \eqref{eq:rayleighLimit}. Recall that the FWHM of the Airy pattern is $0.51\lambda/\na$, and hence both criteria can be read as $c \cdot \fwhm$ with a constant $c >0$. Consequently, we can compute the corresponding value of $\alpha$ such that the right-hand side in \eqref{eq:detectionBoundaryPoisson} equals $c \cdot \fwhm$. The results are shown in \Cref{tab:comparison}. We find that e.g. for $t = 10$ the Abbe criterion allows for a type I error of roughly $6.81\%$, whereas the Rayleigh criterion allows only $1.33\%$. We expect higher number of photons necessary in actual experiments, since we have completely disregarded the background noise by choosing the psf \eqref{eq:gaussKernel}.
	
	
	\bgroup
	\def\arraystretch{1.2}
	\begin{table}[h]
		\begin{center}
			\caption{The type I and II errors ($\alpha = \beta$) such that Abbe or Rayleigh criterion is fulfilled for the VSG and Poisson models for different values of the expected number of photons $t$ in 1D. Here we have assumed a Gaussian psf \eqref{eq:gaussKernel}, so the formula \eqref{eq:detectionBoundaryPoisson} can be simply inverted to calculate $\alpha$.
			}\label{tab:comparison}
			\begin{tabular}{|l|r r r r r|}\hline
				\def\arraystretch{1.25}
				\diagbox[width=13em]{\textbf{Error $\alpha=\beta$}}{\textbf{$\Exp{N} =  t$}}&
				$10$   & $20$  & $30$ & $40$ & $50$ \\ \hline
				Abbe criterion & $6.81\%$ & $1.76\%$ & $0.494\%$ & $0.144\%$ & $0.0432\%$ \\ 
				Rayleigh criterion & $1.33\%$ & $0.0857\%$ & $0.00614\%$ & $4.61\cdot 10^{-4}\%$ & $3.56\cdot 10^{-5}\%$ \\\hline
			\end{tabular}
		\end{center}
	\end{table}
	\egroup
	
	We can also use \eqref{eq:detectionBoundaryPoisson} to analyze the actual improvement by STED over a classical confocal microscope in our statistical context. To this end, recall that the FWHM is decreased by a factor determined by the maximal intensity within the depletion spot, cf. \eqref{eq:STEDresolution}. However, increasing the maximal intensity within the depletion spot automatically reduces the number of emitted and hence observable photons, which leads to an increased statistical error. Even though in practical examples \eqref{eq:STEDresolution} is still a good approximation of the actual resolution \citep{Hell2007}, we can make this more precise using \eqref{eq:detectionBoundaryPoisson} and explain the well-known observation that, unlike Abbe or Rayleigh criteria would suggest, the resolution improvement is not proportional to the FWHM decrease. In experiments, the parameter $\xi$ in \eqref{eq:STEDresolution} is typically chosen such that $\fwhm_{\mathrm{conf}} \approx 6 \fwhm_{\mathrm{STED}}$. As the expected number of photons is determined by the total amount of light emitted by the dyes, $t_{\mathrm{STED}}$ will be significantly smaller than $t_{\mathrm{conf}}$. To estimate $t_{\mathrm{STED}}$, in 1D we can use the first order approximation
	\[
	t_{\mathrm{STED}} \approx \frac{1}{\fwhm\text{ improvement}} t_{\mathrm{conf}} = \frac{1}{6} t_{\mathrm{conf}}.
	\]
	The rationale behind it is that when the psf is thinned by a factor equal to the $\fwhm$ improvement, the same holds for the total number of photons since it is proportional to the integral over the psf. Using \eqref{eq:detectionBoundaryPoisson} this yields
	\[
	d_{\mathrm{STED}} = \frac{1}{6^{\frac34}} d_{\mathrm{conf}} \approx \frac{d_{\mathrm{conf}}}{4},
	\]
	i.e. even though the FWHM is decreased by a factor of $6$, the resolution is only decreased by a factor of around $4$. This agrees quite well with experimental observations, see e.g. \citep{Egner2020}.
	
	\new{To some extent, this seems contradictory to the common interpretation derived from \eqref{eq:fwhmLimit} that the resolution depends linearly on the FWHM. However, this experimentally well-supported interpretation is only true if the other experimental parameters such as discretization, number of photon counts, etc. are fixed, and hence does not explicitly provide the dependency on noise, as our criteria clearly do. In practice, it is in fact well known that the resolution will also depend on other parameters of the sensing system such as the noise.} On the other hand, due to the development of more stable dyes, the number of observable photons has increased during the last decades along with the development of super-resolution microscopes. Thus, the decrease of the FWHM was accompanied by an increase of the signal-to-noise ratio, such that the rule of thumb ``resolution $\sim$ FWHM'' can still be considered valid. Our results give a mathematically rigorous and explicit formula involving both effects, and at the same time explain the experimental observations quite well.
	
	Note that the above argumentation can be readily extended to the two- or three-dimensional setting, as then the corresponding improvement can be computed for each spatial dimension separately.
	
\subsection{Related work}\label{sec:relatedWork}

Investigation of resolution in a statistical setting is not new. The HG model (and variations) was considered in \citep{Harris1964, Milanfar2002, Shahram2004, Shahram2005, Shahram2006} and the Poisson model (and variations) in \citep{Helstrom1964, Helstrom1973, Acuna1997}. However, with the exception of \citep{Acuna1997}, most of these works lack mathematical rigor, whereas \citep{Acuna1997} instead of defining resolution statistically suggest a redefinition in terms of the power function \newSupp{(see also \eqref{eq:powerAcuna})} and do not work out the dependency on the FWHM, see below for more details. 

Already in the 1960s, resolution has been investigated from a decision theoretic point of view in signal processing theory. Early references include Harris \citep{Harris1964} for the homogeneous Gaussian model and Helstrom \citep{Helstrom1964, Helstrom1965} for the Poisson model. In \citep{Helstrom1964, Helstrom1965} Helstrom considered signals consisting of different wavelengths varying in space, noting that using Reiffen and Sherman's paper \citep{Reiffen1963} on optimum demodulation for time-varying Poisson processes one could consider a signal varying in both space and time. For ease of understanding, we assumed that our psf intensity does not vary with time and is monochromatic, see \eqref{eq:airy}.
Harris \citep{Harris1964} only calculated the probability of a correct decision (power) without any consideration of the level. Helstrom \citep{Helstrom1964} assumed a CLT and basically obtained type I error and power expressions in the CLT regime \newSupp{(\Cref{sec:CLTanalysis})} for our Poisson model in his Equation (15). To see this, we have to set $ g_0 = q_{\alpha, t,n,d}^* := q_{1-\alpha}\sqrt{\V{H_0}{\statPoi}}+\E{H_0}{\statPoi}$, \newSupp{see \eqref{eq:q_defintion_Poisson}}, as the threshold in Helstrom's theory (which is not specified there), $M_{0}(x) = p_{0i}$, $M_{1}(x) = p_{1i}$, where $M_{\cdot}(x)$ is the effective photon count rate density at $x\in[-1/2, 1/2]^2$, and change integrals in his work to sums.

In \citep{Helstrom1973} Helstrom went even further than in \citep{Helstrom1964} and considered \eqref{eq:H0vsH1} in the context of quantum information theory, following the statistical paradigm originally set out by Middleton \citep{Middleton1953}. Among other things, Helstrom found out that $P_e$, the average of type I and type II errors, converges to $1/2\exp(-t)$ with increasing distance $d$. Here $t$ is interpreted as the average number of photons. As expected, the bound tends to zero in the classical regime as $t\to\infty$. Reassuringly, the form of his combined error probability $P_e$ becomes the same as ours with increasing $t$. However, Helstrom's results cannot be transferred to our case due to the quantum information theoretic setting, and his proofs are not mathematically rigorous. Notably, he found that $P_e$ is very close to its asymptotic minimum $1/2\exp(-t)$  whenever $d$ approximately equals twice the Rayleigh criterion, which led him to define the resolution as twice the Rayleigh limit. Much of the current research on resolution in quantum information theory revolves around trying to design different measurement techniques \citep{Tsang2016, Tsang2016a, Nair2016, Lu2018} which would allow to experimentally come as close as possible to the theoretical limits calculated by Helstrom \citep{Helstrom1973}. Some of these measurement techniques have been already confirmed by proof of principle experiments, see e.g. \citep{Tham2017}, others even applied to biological imaging \citep{Tenne2019}.
We emphasize that our theory is designed to describe everyday microscopy experiments with rather many photons so that Helstrom's limit $1/2\exp(-t)$ can be safely disregarded. Even though the mathematical treatment of quantum optics experiments 
is beyond the scope of this paper, we think that it is a fruitful research direction also for statisticians (see e.g. \citep{Yamagata2013}, where the authors have defined a \textit{quantum} likelihood ratio).

\new{We also mention contributions from the field of modern signal processing and engineering, namely the works by Milanfar and collaborators \citep{Milanfar2002, Shahram2004, Shahram2006}, see also \citep{Shahram2005} for an overview. These authors also investigate resolution in terms of statistical measurement errors, and they derive a dependency of the resolution on the inverse fourth root of the so-called \textit{measurement} signal-to-noise-ratio, see also \cite{Smith2005}. Note that this has some similarity with the dependency on $t$ in \eqref{eq:resolutionPoi}. However, even though resolution is treated as a statistical testing problem, in all these papers a homogeneous Gaussian model (which is challenged by our analysis) is assumed and the estimation error (quantified by a Cramer--Rao type lower bound in \cite{Smith2005}) rather than the detection error, which we believe provides a more accurate description of resolution in a statistical context, is used. See also Terebizh \citep{Terebizh1995} for a non-Bayesian view on this.  
	\cite{Ferreira2020} has recently introduced support stability which means---roughly speaking---that the true number of support points has to be recovered exactly (in our context one support point in the hypothesis and two support points in the alternative). In the context of a deterministic noise model with band-limited PSF they determine conditions when for a LASSO type estimator such support stability is valid.}

Closest to our paper appears \new{to be} the work \citep{Acuna1997} by Acu{\~n}a and Horowitz on telescope resolution. There, the testing problem $H_0\colon d = 0$ vs. $H_1\colon d>0$ in a 2D model on a line is considered. This corresponds to our Poisson model, but with explicit constant background noise (see \Cref{rmk:backNoise} on how to incorporate such noise into our model and \Cref{rmk:mainThmWithBackNoise} for corresponding results). Their main quantity of interest is $p_{1i}$ \eqref{eq:p1iAbbrevation} considered as a function of $d$. Under assumptions on $p_{1i}$'s roughly corresponding to our assumptions on the psf $h$, they analyzed the likelihood ratio test in the regime where $t\to\infty$, but kept the number of measurements (discretization) $n$ fixed and finite. Clearly, a finite value of $n$ will at some point restrict the resolution to be of the order $1/n$, as no information finer than the bin-size can be obtained. Moreover, the mathematical treatment of this regime is substantially simpler, as the LRT statistic is given by a finite sum of independent weighted Poisson random variables, whose intensity tends to $\infty$, and hence one obtains a CLT trivially. Acu{\~n}a and Horowitz \citep{Acuna1997} also note that there is a different regime with finite fixed $t$ and $n\to\infty$, but do not treat this. All of our results except for asymptotic equivalence also hold in this regime: See \newSupp{\Cref{rmk:finiteNhomogeneous,rmk:finiteNstabilized}}, and note that the relation between $t$ and $n$ necessary for \newSupp{\Cref{thm:CLTforPoi}} is trivially satisfied for constant $t$. The authors define resolution as the (asymptotic) power function of the likelihood ratio test rather than as a single number, which in some sense, is close to our \Cref{def:statResolution}. However, we believe that it is not intuitive for practitioners to define the resolution as a probability, since they are used to thinking of resolution as a distance.
The main result of \citep{Acuna1997} is the calculation of this power function in the regime $t\to\infty$, $n=const$, which we can reproduce asymptotically for large $n$ and $t$ from our more general results (up to dimension) if we keep a sum instead of the integral in \eqref{eq:resolutionPoi}, see \newSupp{\Cref{rmk:finiteNstabilized}}. Note furthermore that the power expression of \citep{Acuna1997} is only valid if $d = const \times t^{-1/4}$ in accordance with our result \eqref{eq:resolutionPoi}. We stress that our results give an explicit dependency on the FWHM.

Finally we mention, that the term `super-resolution' is used in mathematical and statistical communities also in a different context, see \citep{Donoho1992, Morgenshtern2016, Candes2013, Candes2014, Fernandez-Granda2015}. There super-resolution addresses the ways to localize signals with (un)known amplitudes by observing their (noisy) Fourier samples, i.e. samples in the frequency domain. The domain is always assumed to have some cut-off frequency $f_c$ corresponding to the inverse Abbe limit in our context. In contrast, in this paper we assume that the locations of our signals are always \textit{known}, i.e. we will follow the experimentalists' terminology.

\section{Simulations}\label{sec:simulations}
\subsection{Simulation setup}
To investigate the finite sample validity of our asymptotic theory, we have performed simulations exploring the (asymptotic) resolution's $d$ dependence on the illumination time $t$, $\fwhm$ and discretization $n$, see \eqref{eq:detectionBoundaryHomGauss} and \eqref{eq:detectionBoundaryPoisson}.

In all simulations we chose the level $\alpha = 0.1$ and determined when the type II error is in the range $\beta \in [0.95\alpha, 1.05\alpha)$. For simplicity, we only describe the simulation in \Cref{fig:simulationsSmallAndBig} (a) of $d$ vs. $\fwhm$ in detail, others were conducted similarly. Throughout the simulation we set discretization $n=20$ and $d=\fwhm$ as the starting distance between the peaks in the alternative. Then for $10,000$ times we generated $n$ independent random variables following the corresponding model \eqref{eq:model} under the alternative and calculated the type II error. We then used the bisection method to advance $d$ until the type II error became between $0.95\alpha$ and $1.05\alpha$. 
We performed the above procedure for the $\fwhm$ range $0.15,0.16,\ldots, 0.25$.

\subsection{Simulation results}

The slopes obtained by log-log plots support our theory well already for small $t$ and $n$, see left column of \Cref{fig:simulationsSmallAndBig} and \Cref{tab:simulationResultsExperimental}. 
We stress that in the Gaussian models we only have to consider $d\searrow0$, provided that we change the integrals in \Cref{thm:main} to sums (see \newSupp{\Cref{rmk:finiteNhomogeneous,rmk:finiteNstabilized}}).
Therefore, it is expected that for given $t$ and $d$ the simulations are in general closer to the theoretical results in \Cref{thm:main} for the VSG and HG models.
This is confirmed by the simulations, where in general the HG simulated values are much closer to the theoretical ones. 
As a rule of thumb, if $t\ge 500$ and $n \ge 500$, the asymptotic formulas can be used as good approximations, see the right column of \Cref{fig:simulationsSmallAndBig}. In general, increasing the intensity $t$ seems to make asymptotic formulas closer to the simulations than increasing the discretization $n$. This is displayed in \Cref{fig:simulationsIntermediate} which looks at the $(t,n)$ plane in more detail: The asymptotic formulas get much closer to the simulations when transitioning from (a) with $(50,50)$ to (c) with $(100,50)$, than from (a) to (b) with $(50,100)$.

\begin{figure}[!htb]
	\begin{subfigure}{.49\textwidth}
		\includegraphics[width=\linewidth]{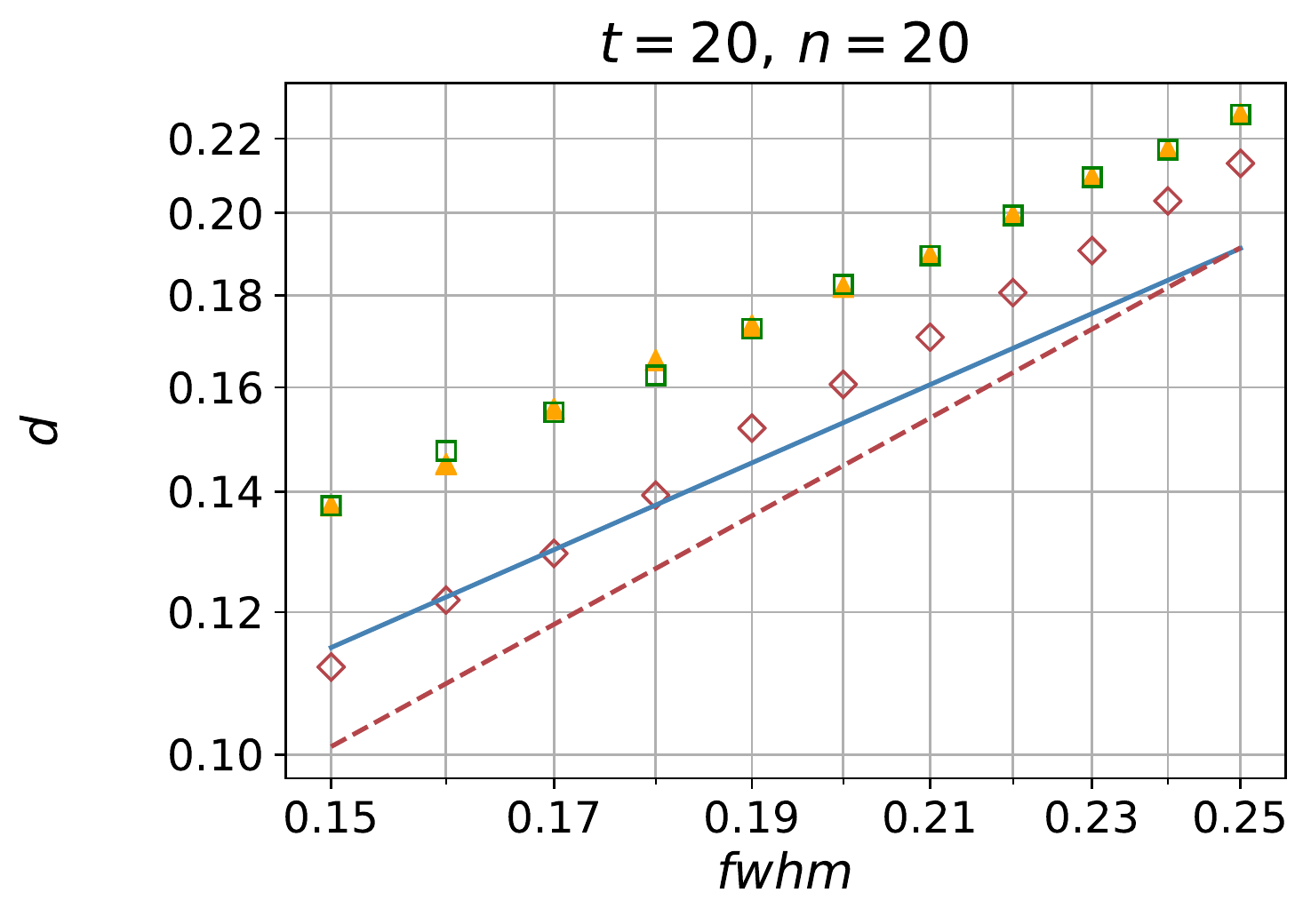}
		\caption{
		}
	\end{subfigure}
	\hspace{1mm}
	\begin{subfigure}{.49\textwidth}
		\includegraphics[width=\linewidth]{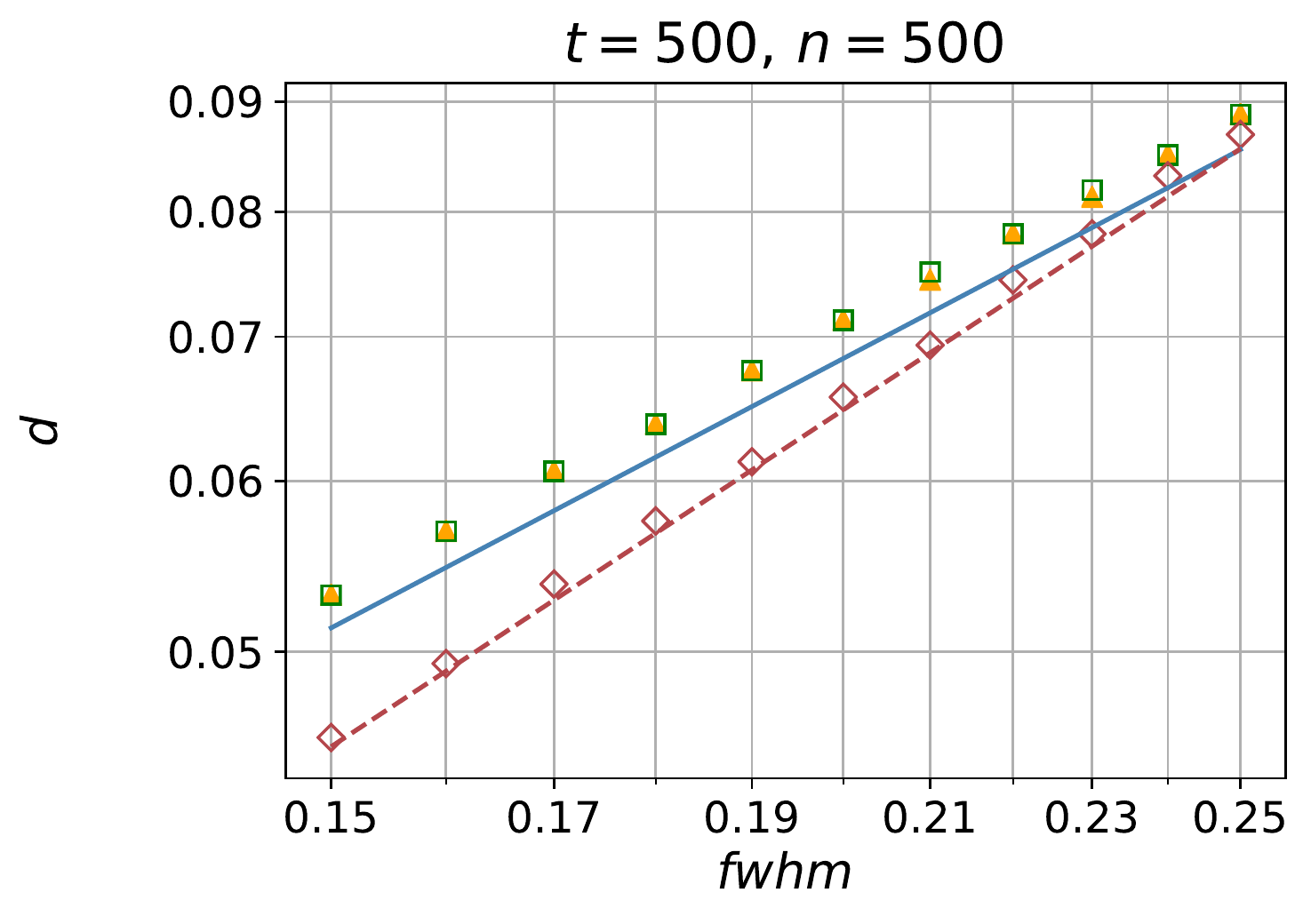}
		\caption{
		}
	\end{subfigure}\\
	\begin{subfigure}{.49\textwidth}
		\includegraphics[width=\linewidth]{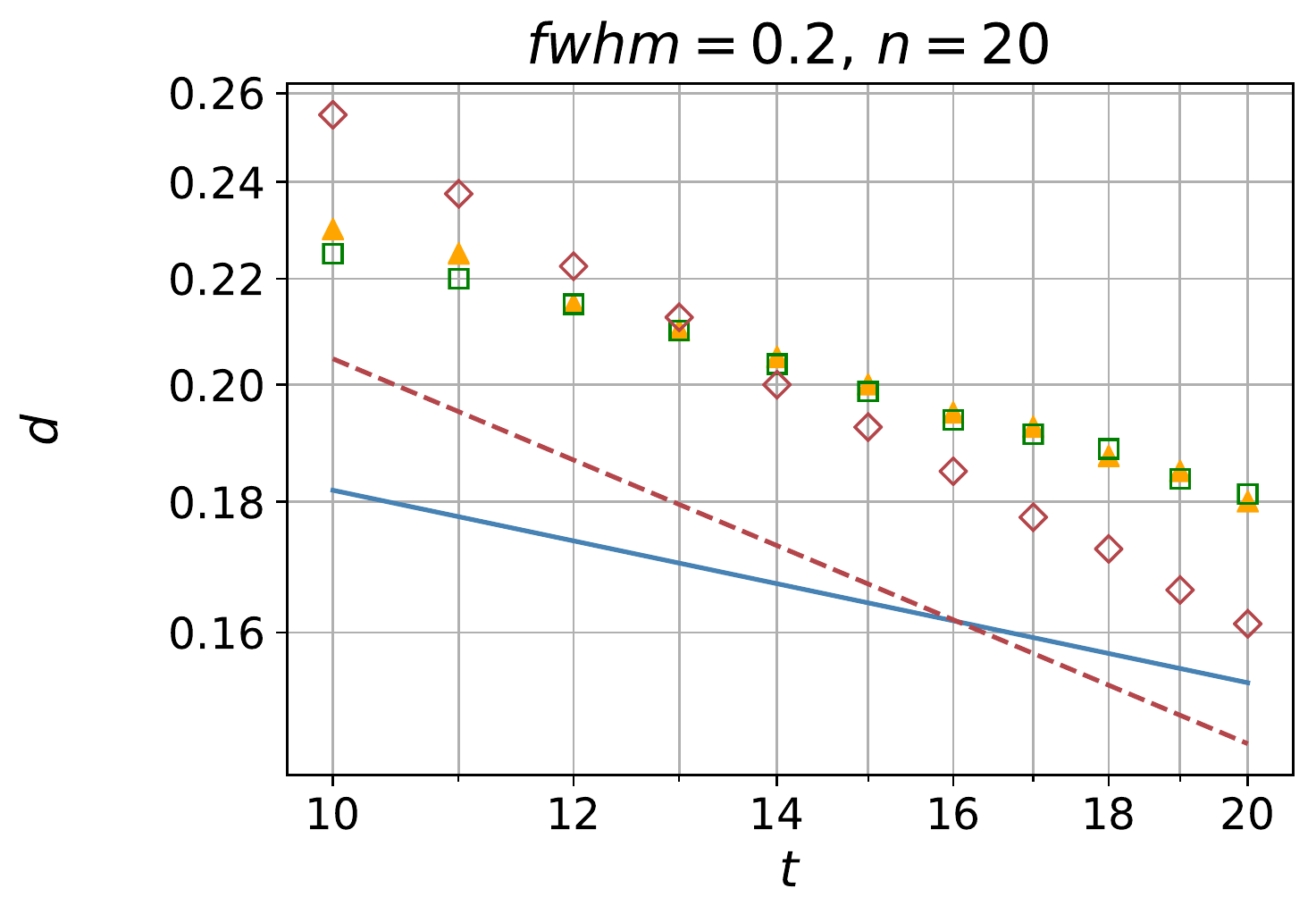}
		\caption{
		}
	\end{subfigure}
	\hspace{1mm}
	\begin{subfigure}{.49\textwidth}
		\includegraphics[width=\linewidth]{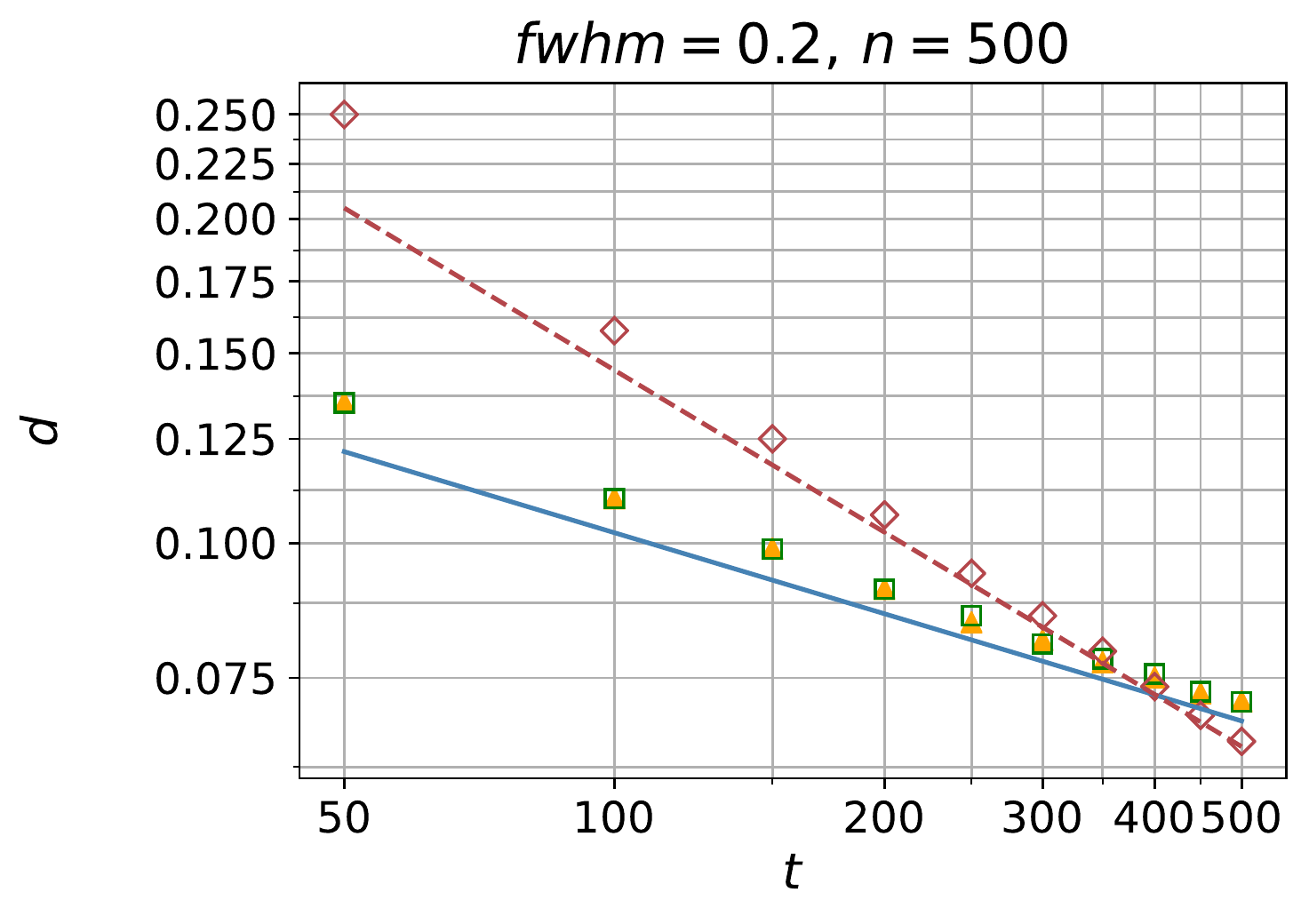}
		\caption{
		}
	\end{subfigure}
	\begin{subfigure}{.49\textwidth}
		\includegraphics[width=\linewidth]{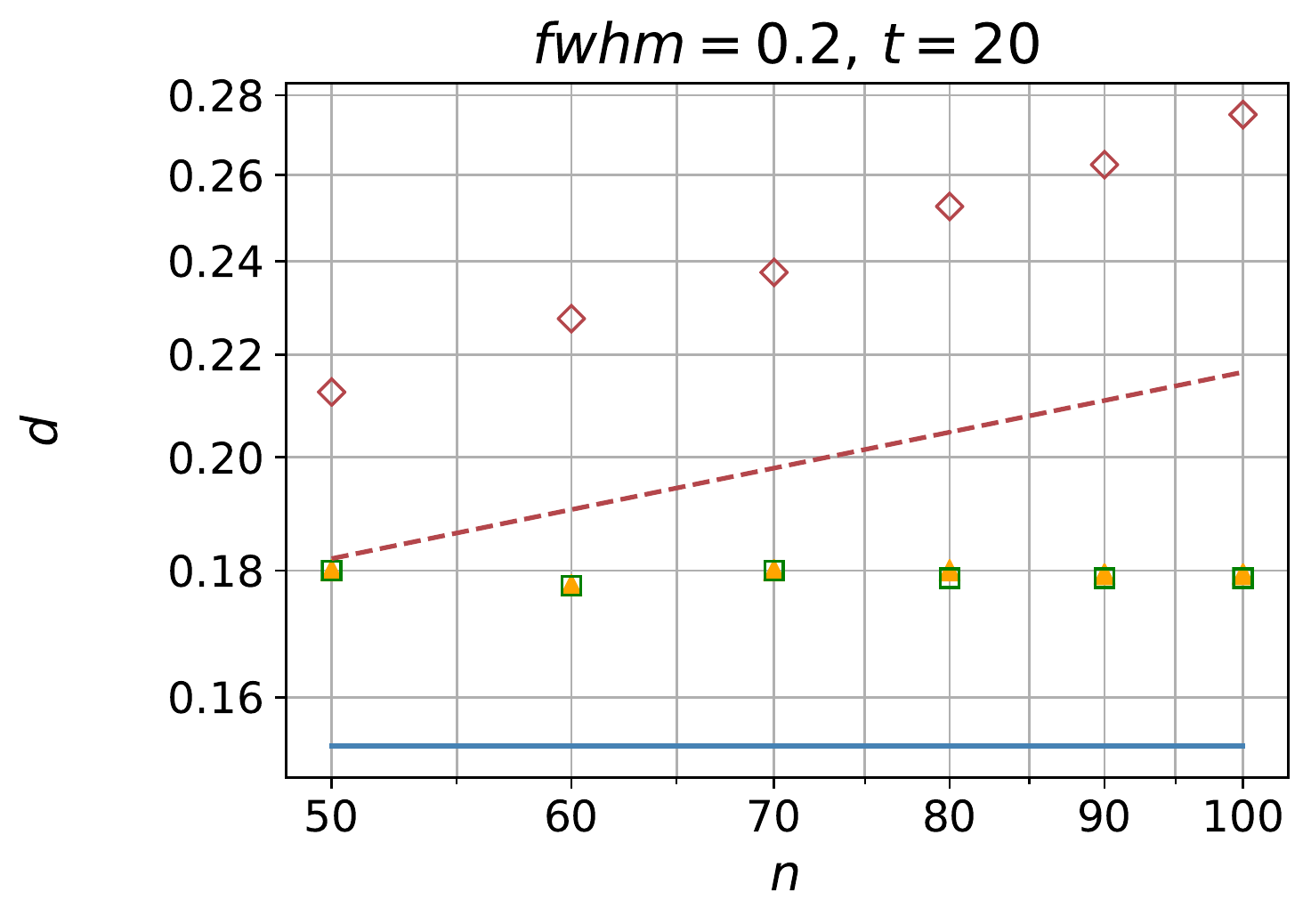}
		\caption{
		}
	\end{subfigure}
	\hspace{1mm}
	\begin{subfigure}{.49\textwidth}
		\includegraphics[width=\linewidth]{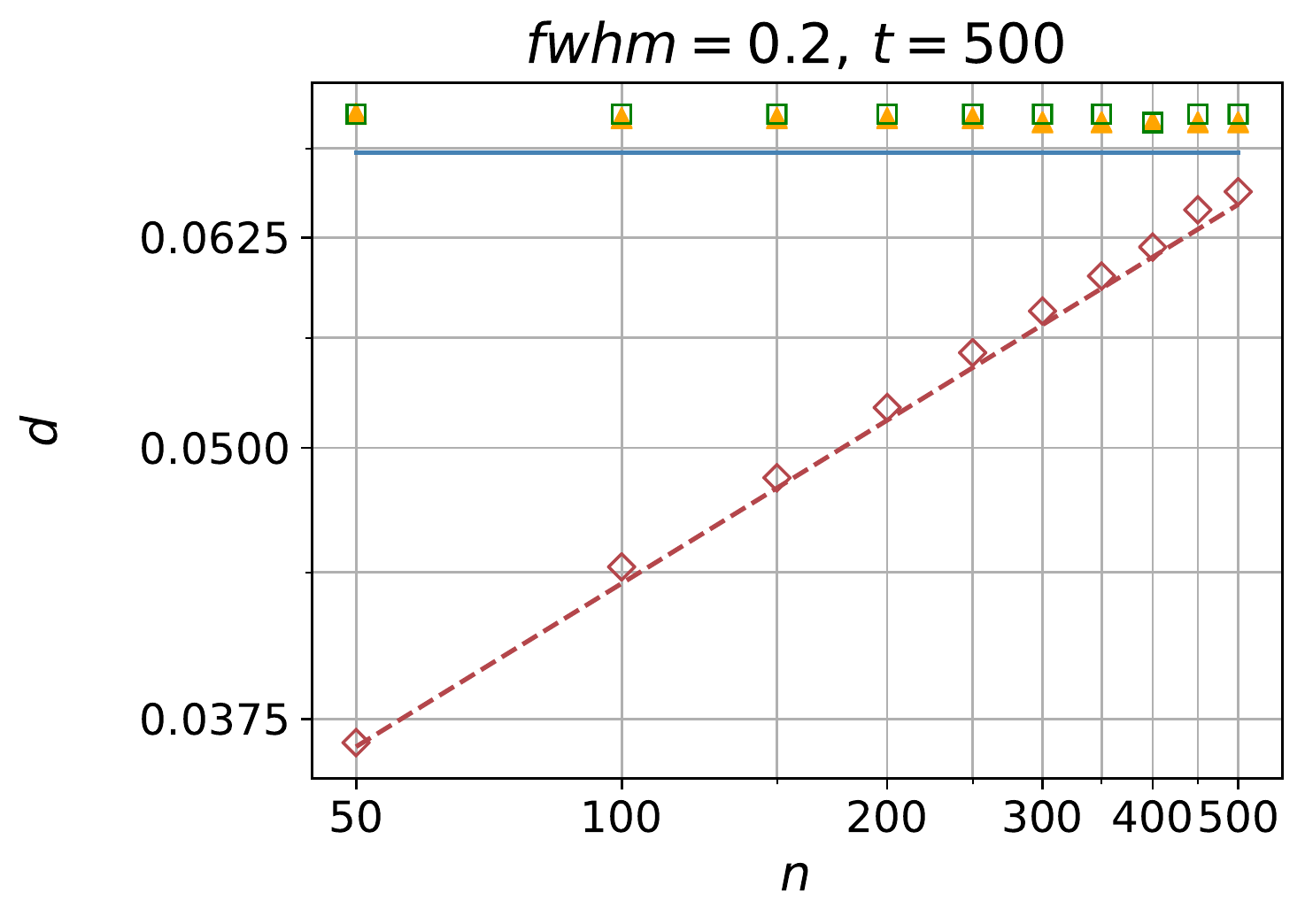}
		\caption{
		}
	\end{subfigure}
	\begin{subfigure}{.99\textwidth}
		\centering
		\includegraphics[width=.43\linewidth]{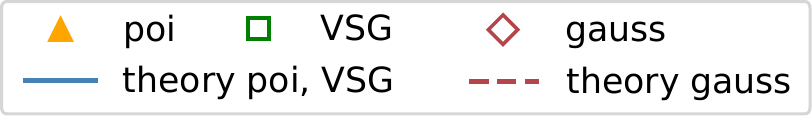}
	\end{subfigure}
	\caption{
		Simulations investigating finite sample validity of the asymptotic relations $d = 2.29\, t^{-1/2}n^{1/4} \fwhm^{5/4}$ \eqref{eq:detectionBoundaryHomGauss} for the homogeneous Gaussian model, and $d = 1.62\, t^{-1/4} \fwhm$ \eqref{eq:detectionBoundaryPoisson} for the VSG and Poisson models, see \Cref{model:poisson}. Here we have set $\alpha = 0.1$. For short illumination times $t$ and small discretizations $n$ only the slopes of theoretical formulas are close to the slopes obtained from simulations (left column, see also \Cref{tab:simulationResultsExperimental}). As $t$ and $n$ increase, the theoretical formulas become accurate approximations also in terms of absolute error (right column).
		\label{fig:simulationsSmallAndBig}
	}
\end{figure}

\bgroup
\def\arraystretch{1.3}
\begin{table}[h]
	\begin{center}
		\caption{Limiting asymptotic statistical resolution as given by \Cref{thm:main} for the Gaussian psf \eqref{eq:gaussKernel}
			\label{tab:simulationResultsExperimental} for small values of $t$ and $n$. The entries in $d(\fwhm)$ correspond to \Cref{fig:simulationsSmallAndBig} (a), in $d(t)$ to \Cref{fig:simulationsSmallAndBig} (c) and in $d(n)$ to \Cref{fig:simulationsSmallAndBig} (e).
		}
		\begin{tabular}{|l|l l l|}\hline
			\def\arraystretch{1.3}
			\multirow{2}{*}{Model} 
			& $d(\fwhm)_{emp}$ & $d(t)_{emp}$ & $d(n)_{emp}$\\   
			& $d(\fwhm)_{th} $ & $d(t)_{th} $ & $d(n)_{th}$\\ \hline
			\multirow{2}{*}{HG} 
			& $1.23\fwhm^{1.26}$ & $1.17\, t^{-0.665}$ & $ 0.0502\, n^{0.368} $\\   
			& $1.08\fwhm^{5/4}$ & $ 0.647\,t^{-1/2}  $ & $0.0685\, n^{1/4}$\\ \cline{1-4}
			%
			%
			Poisson & $0.879\fwhm^{0.979}$ & $ 0.519\,t^{-0.352}$ & $0.177 n^{0.00274}$ \\
			%
			VSG & $0.873\fwhm^{0.975}$ & $0.495\,t^{-0.336}$ & $0.183 n^{-0.00464}$ \\ \cline{2-4}  
			& $0.765\fwhm$ & $0.323\, t^{-1/4}$ & $0.153$\\ \hline
		\end{tabular}
	\end{center}
\end{table}
\egroup

\begin{figure}[!htb]
	\begin{subfigure}{.49\textwidth}
		\includegraphics[width=\linewidth]{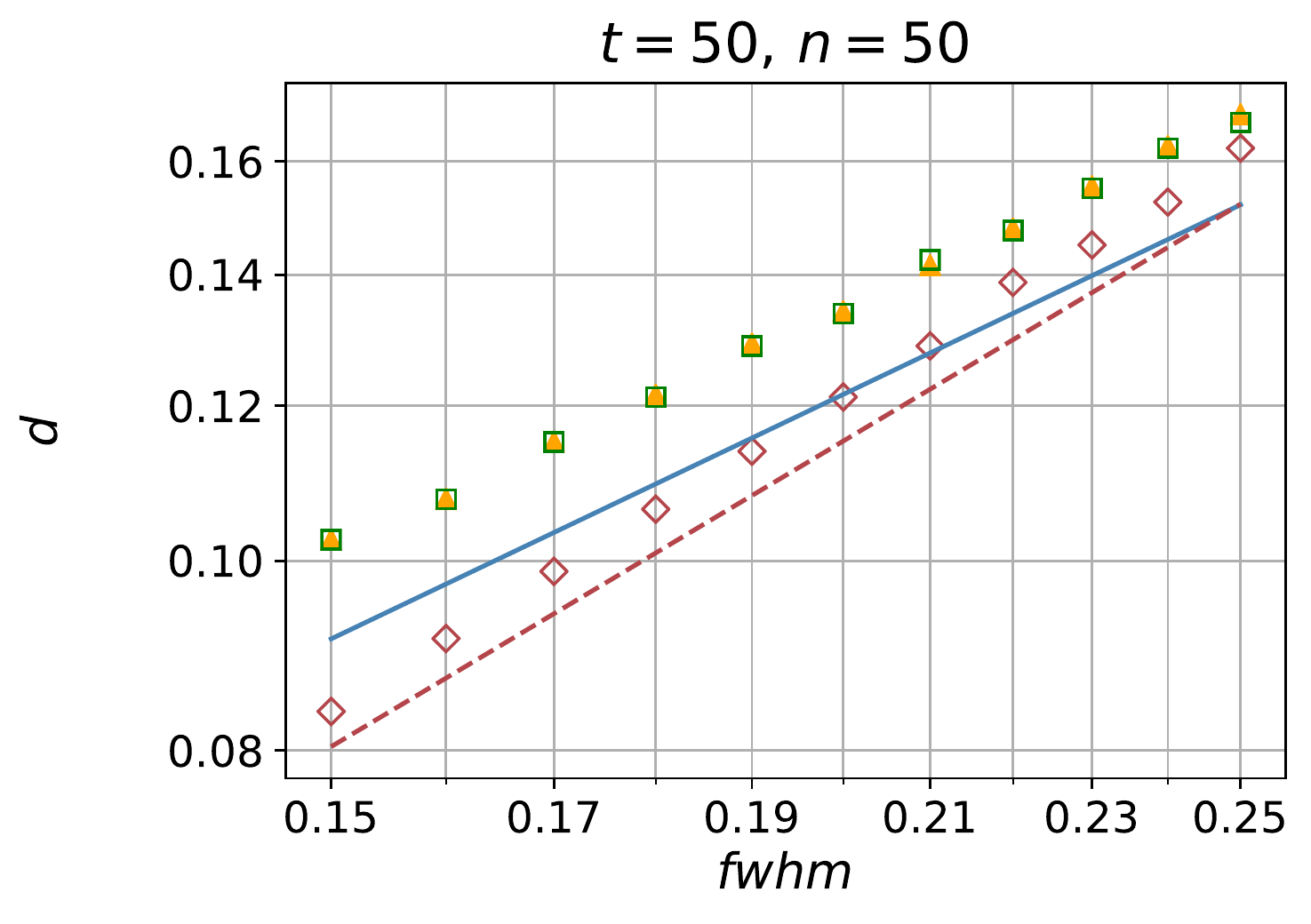}
		\caption{
		}
	\end{subfigure}
	\hspace{1mm}
	\begin{subfigure}{.49\textwidth}
		\includegraphics[width=\linewidth]{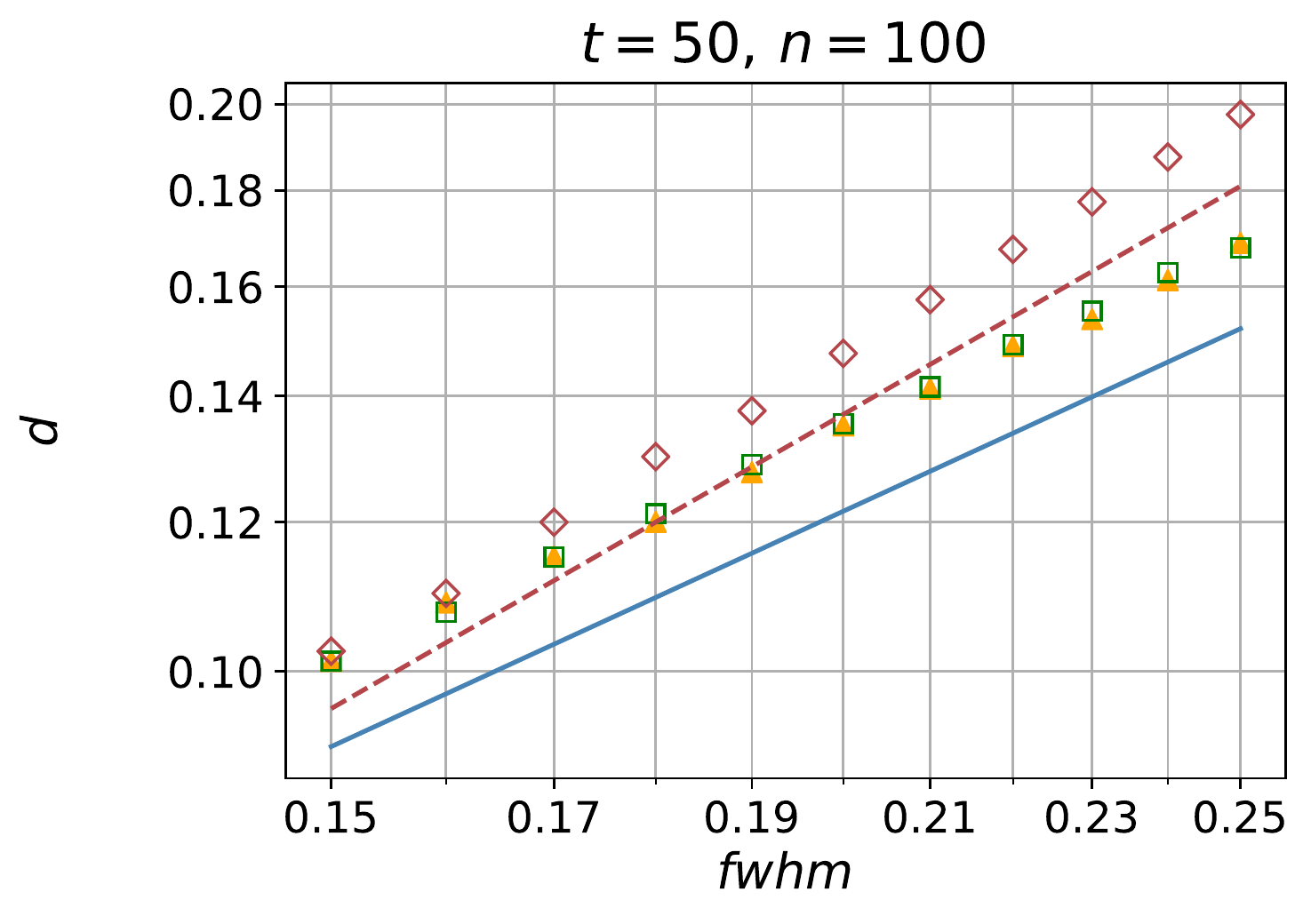}
		\caption{
		}
	\end{subfigure}\\
	\begin{subfigure}{.49\textwidth}
		\includegraphics[width=\linewidth]{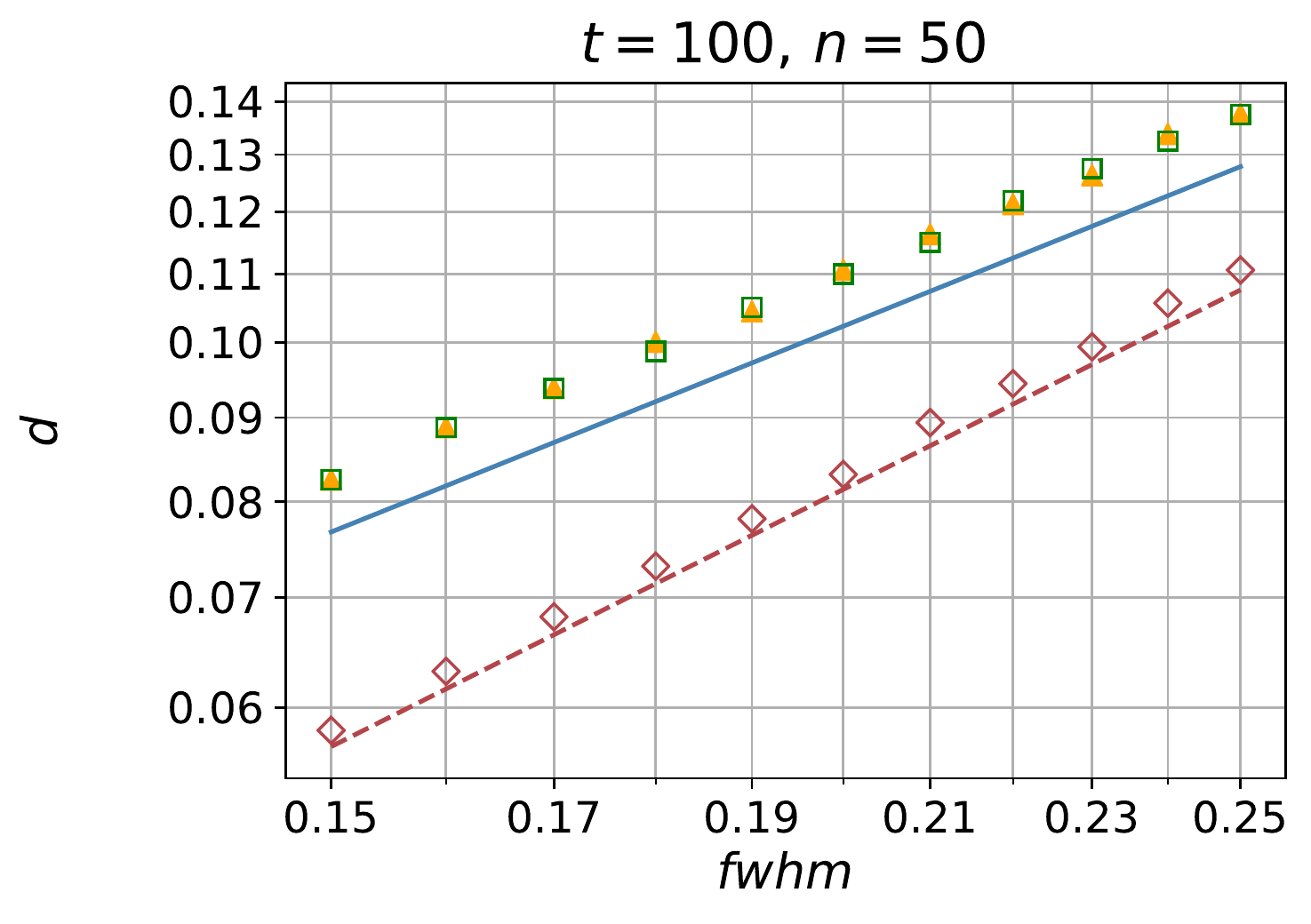}
		\caption{
		}
	\end{subfigure}
	\hspace{1mm}
	\begin{subfigure}{.49\textwidth}
		\includegraphics[width=\linewidth]{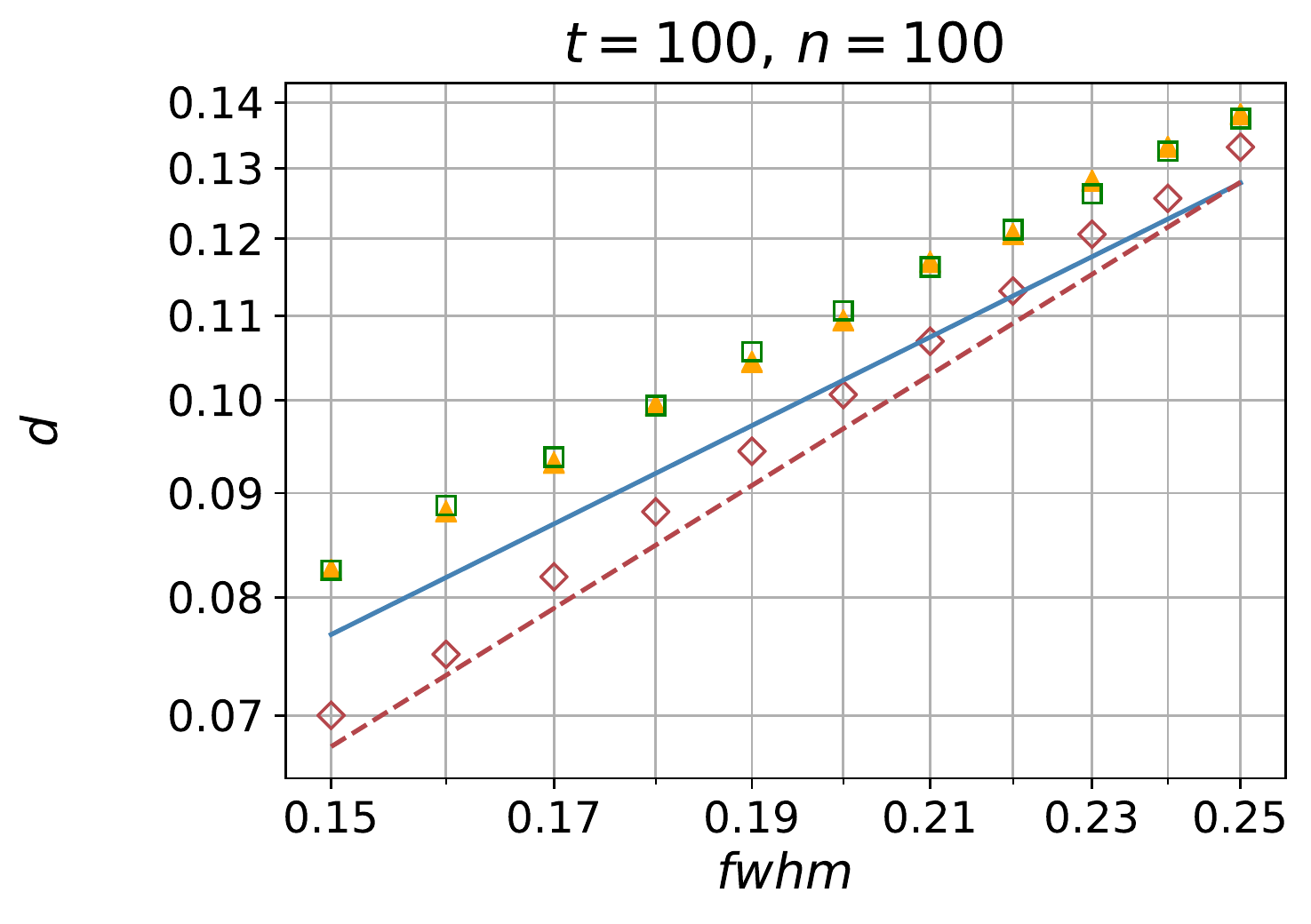}
		\caption{
		}
	\end{subfigure}
	\begin{subfigure}{.99\textwidth}
		\centering
		\includegraphics[width=.43\linewidth]{simulationLegend_withoutBinomial.pdf}
	\end{subfigure}
	\caption{
		Simulations investigating the finite sample validity of the asymptotic relations $d = 2.29\, t^{-1/2}n^{1/4} \fwhm^{5/4}$ \eqref{eq:detectionBoundaryHomGauss} for the homogeneous Gaussian model, and $d = 1.62\, t^{-1/4} \fwhm$ \eqref{eq:detectionBoundaryPoisson} for the VSG and Poisson models, see \Cref{model:poisson}. Here we have set $\alpha = 0.1$ and explored the intermediate parameter regime $t=50,100$ and $n=50,100$. 
		\label{fig:simulationsIntermediate}
	}
\end{figure}

	\newpage
	\appendix
	\section{Appendix: Proof of the main Theorem}\label{sec:mainResults} 
	
	In this section we will prove \Cref{thm:main}. Proofs of other results that may be skipped in the first reading are provided in \Cref{sec:auxProofs}.
	
	We have separated the proof into three parts---one for each of the three models defined in \Cref{sec:statisticalModel}. We start with the homogeneous and variance stabilized Gaussian models because the proof for the Poisson model relies on them.
	
	Before we start, let us introduce some notation. For functions $f(\cdot - x_0)\in L^1[0,1]$ let
	\begin{equation}\label{eq:integralOperator}
	\int_i f := \int_{(i-1)/n}^{i/n} f(x - x_0)\,\mathrm{d}x
	\qquad\text{and}\qquad
	\int_0^1 f := \int_{0}^{1} f(x - x_0)\,\mathrm{d}x.
	\end{equation}
	Mostly, we will use it for the psf $h$ and its derivatives. Note that we can rewrite \eqref{eq:p0iAbbrevation} as $p_{0i} = \int_i h$ and provided that \Cref{ass:h2} holds, we have
	\begin{equation}\label{eq:int_iGreater0}
	\int_i h \ge \min_{x\in[0,1]}\frac{h(x-x_0)}{n} \ge \frac{c}{n}
	\end{equation}
	for some constant $c>0$.
	
	\subsection{Homogeneous Gaussian model}\label{sec:gaussian}
	\begin{proof}[Proof of \Cref{thm:main} for the HG model]		\label{proof:Gauss}
		
		As $F_{t\theta} = \Gauss(t\cdot \theta, 1)$, the LRT statistic in \eqref{eq:LRTstat} becomes
		\[
		T_{t,n,d}\left(Y\right) = \log\left(\frac{\varphi\left(Y~|~H_1\right)}{\varphi\left(Y~|~H_0\right)}\right)
		=
		\frac{1}{2}\sum_{i=1}^n \left(t^2p_{0i}^2 - t^2p_{1i}^2 + 2 Y_i t\left(p_{1i} - p_{0i}\right)\right)
		\]
		with $\varphi$ the density of a standard normal variate. The corresponding likelihood ratio test \eqref{eq:LRT} is given by
		\begin{equation}\label{eq:gaussStatistic}
		\Phi_{t,n,d}\left(Y\right) := \begin{cases} 1 & \text{if}\quad T_{t,n,d} \left(Y\right) > q_{\alpha,t,n,d}^*, \\[0.1cm] 0 & \text{otherwise,} \end{cases}
		\end{equation}
		where $q_{\alpha,t,n,d}^* = \sqrt{2\mu_{t,n,d}} q_{1-\alpha} - \mu_{t,n,d}$ with $q_{1-\alpha}$ the $1-\alpha$ quantile of $\mathcal N \left(0,1\right)$ and
		\begin{equation}\label{eq:mu_nDefinition}
		\mu_{t,n,d} = \frac{t^2}{2}\sum_{i=1}^n \left(p_{1i} - p_{0i}\right)^2.
		\end{equation}
		
		For ease of readability, in the following we will only write the dependence on $n$, and omit indices $t$ and $d$.
		
		It holds that under $H_0: T_{n}\left(Y\right) \sim \mathcal N \left(-\mu_{n}, 2\mu_{n}\right)$
		and under $H_1: T_{n}\left(Y\right) \sim \mathcal N \left(\mu_{n}, 2\mu_{n}\right)$. We calculate
		\begin{align}
		\Prob{H_0}{\mathbf{reject}} &= \Prob{H_0}{T_{n}\left(Y\right) > q_{\alpha,n}^*}\nonumber\\
		&= \Prob{}{-\mu_{n} + \sqrt{2\mu_{n}} W > q_{\alpha,n}^*}
		= 1-\Prob{}{W \leq \frac{q_{\alpha,n}^* + \mu_{n}}{\sqrt{2\mu_{n}}}} = \alpha, \label{eq:gaussType1error}
		\end{align}
		where $W \sim \mathcal N \left(0,1\right)$. Thus, the test is indeed a level $\alpha$ test.
		
		We want the type II error to be equal to $\beta$. Thus, we require
		\begin{align}
		\beta = \Prob{H_1}{\mathbf{accept}} &= \Prob{H_1}{T_{n}\left(Y\right) \leq q_{\alpha,n}^*}   \nonumber  \\[0.1cm]
		&= \Prob{}{\mu_{n}  + \sqrt{2\mu_{n}}W \leq q_{\alpha,n}^*}
		= \Prob{}{W \leq q_{1-\alpha} -\sqrt{2\mu_{n}}},\label{eq:gaussType2error}
		\end{align}
		where again $W \sim \mathcal N \left(0,1\right)$. This implies that 
		\begin{equation}\label{eq:mu_nEqualquantiles}
		\mu_n = (q_{1-\alpha} - q_{\beta})^2/2.
		\end{equation}
		
		By definition of $\mu_n$ we have (recall \eqref{eq:integralOperator})
		\[
		\mu_{n} = \frac{t^2}{2}\sum_{i=1}^n \left(p_{1i} - p_{0i}\right)^2 
		=
		\frac{t^2}{2}\sum_{i=1}^n \left(\int_i \Delta\right)^2,
		\] 
		where
		\begin{equation*}
		\Delta(x-x_0):= \frac{1}{2} h(x-x_1)+ \frac{1}{2} h(x-x_2)-h(x-x_0)
		\end{equation*}
		is the difference between the psfs under $H_1$ and $H_0$.
		Since $h\in C^2[0,1]$,
		\begin{equation}\label{eq:Taylor series h}
		h(x-x_j)=\sum_{k=0}^2\frac{h^{(k)}(x-x_0)}{k!}(x_0-x_j)^k+o\left((x_0-x_j)^2\right).
		\end{equation}
		Hence, for $x_0 = (x_1+x_2)/2$
		\begin{align}\label{eq:Delta}
		\Delta(x-x_0)
		&=\frac{1}{2}\sum_{k=0}^2\frac{h^{(k)}(x-x_0)}{k!}(x_0-x_1)^k
		+
		\frac{1}{2}\sum_{k=0}^{2}\frac{h^{(k)}(x-x_0)}{k!}(x_0-x_2)^k-h(x-x_0)\nonumber\\
		&\hspace{8cm}+o\left((x_0-x_1)^2+(x_0-x_2)^2\right)\nonumber\\
		&=\frac{1}{4}\, h''(x-x_0)\left((x_0-x_1)^2 
		+
		(x_0-x_2)^2\right)
		+ h'(x-x_0)\big(x_0- (x_1 + x_2)/2\big)\nonumber\\
		&\hspace{8cm}
		+ o\left((x_0-x_1)^2+(x_0-x_2)^2\right) \\
		&=\frac{1}{8}h''\big(x-x_0\big)\,d^2
		+ o\left(d^2\right).\label{eq:Delta2}
		\end{align}
		Thus, we get
		\begin{align}
		\mu_{n} &= \frac{t^2}{2}\sum_{i=1}^n \left(p_{1i} - p_{0i}\right)^2 
		=
		\frac{t^2}{2}\sum_{i=1}^n \left(\int_i \Delta\right)^2
		= 
		\frac{t^2}{2}\sum_{i=1}^n\left( \frac{d^2}{8}\int_i h'' +o\left(\frac{d^2}{n}\right)\right)^2 \nonumber\\
		&= 
		\frac{t^2}{2}\sum_{i=1}^n \left(\frac{d^4}{64}\left(\int_i h''\right)^2  +  o\left(\frac{d^4}{n^2}\right)\right) 
		= 
		\frac{t^2}{2}\sum_{i=1}^n \frac{d^4}{64}\left(\int_i h''\right)^2  +  o\left(\frac{t^2 d^4}{n}\right) \label{eq:mu_nfiniteGauss}\\
		&=
		\frac{t^2 d^4}{128n} \int_{0}^{1}\left(h''\right)^2  +  o\left(\frac{t^2 d^4}{n}\right), \label{eq:mu_{n}}
		\end{align}
		applying \Cref{lem:summingRatio}. Rearranging \eqref{eq:mu_{n}} for $d$ and using \eqref{eq:mu_nEqualquantiles} we get the desired relation \eqref{eq:resolutionGauss}. Hence, $d$ as given in \eqref{eq:resolutionGauss} is the asymptotic statistical resolution.
	\end{proof}
	
	\begin{remark}\label{rmk:finiteNhomogeneous}
		In the derivation of \eqref{eq:mu_nfiniteGauss} only $d\searrow0$ (and hence $t\to\infty$) is required. Thus, for a finite $n$ we get 
		\[
		d \asymp 2\sqrt{2}\,\sqrt{ q_{1-\beta} - q_{\alpha} } \left(\sum_{i=1}^n \left(\int_i h''\right)^2 \right)^{-1/4} t^{-1/2}.
		\]
	\end{remark}

	\subsection{Variance stabilized Gaussian model}\label{sec:stabilizedGaussian}
	
	\begin{proof}[Proof of \Cref{thm:main} for the VSG model]\label{proof:VSGmain}
		Let $F_{2\sqrt{t\theta}} = \Gauss(2\sqrt{t\theta},1)$, i.e.
		\begin{equation*}
		Y_i \overset{\text{indep.}}\sim \Gauss\left(2\left(t\int_{(i-1)/n}^{i/n} g(x-x_0)\,\mathrm{d}x\right)^{1/2},1\right).
		\end{equation*}
		Then the log-likelihood function is
		\[
		T_{t,n,d}\left(Y\right) = \log\left(\frac{\varphi\left(Y~|~H_1\right)}{\varphi\left(Y~|~H_0\right)}\right)
		=
		\sum_{i=1}^n \left[2t\left(p_{0i} - p_{1i}\right) + 2 Y_i \sqrt{t}\left(\sqrt{p_{1i}} - \sqrt{p_{0i}}\right)\right]
		\]
		with $p_{\cdot i}$ defined in \Cref{eq:p0iAbbrevation,eq:p1iAbbrevation}.
		We define the corresponding likelihood ratio test 
		as in \eqref{eq:gaussStatistic}, but this time we set $q_{\alpha,t,n,d}^* = \sqrt{2\nu_{t,n,d}} q_{1-\alpha} - \nu_{t,n,d}$ with
		\[
		\nu_{t,n,d} = 2t\sum_{i=1}^{n}\left(\sqrt{p_{1i}}-\sqrt{p_{0i}}\right)^2.
		\]

		The proof is similar to the proof of the homogeneous Gaussian model in \Cref{proof:Gauss}. We again skip the indices $t$ and $d$ in what follows.
		
		We have under $H_0: T_{n}\left(Y\right) \sim \mathcal N \left(-\nu_{n}, 2\nu_{n}\right)$ and under $H_1: T_{n}\left(Y\right) \sim \mathcal N \left(\nu_{n}, 2\nu_{n}\right)$.
		We calculate
		\begin{align}
		\Prob{H_0}{\mathbf{reject}} &= \Prob{H_0}{T_{n}\left(Y\right) > q_{\alpha,n}^*}\nonumber\\
		&= \Prob{}{-\nu_{n} + \sqrt{2\nu_{n}} W > q_{\alpha,n}^*}
		= 1-\Prob{}{W \leq \frac{q_{\alpha,n}^* + \nu_{n}}{\sqrt{2\nu_{n}}}} = \alpha,\label{eq:vsgType1error}
		\end{align}
		where as previously $W \sim \mathcal N \left(0,1\right)$. Thus, the test is indeed a level $\alpha$ test.
		
		We want the type II error to be equal to $\beta$. Thus, we require
		\begin{align}
		\beta = 
		\Prob{H_1}{\mathbf{accept}} 
		&= \Prob{H_1}{T_{n}\left(Y\right) \leq q_{\alpha,n}^*}   \nonumber  \\[0.1cm]
		&= \Prob{}{ \nu_{n} + \sqrt{2\nu_{n}} W\leq\sqrt{2\nu_{n}} q_{1-\alpha} - \nu_{n}}  
		= \Prob{}{W \leq q_{1-\alpha} -\sqrt{2\nu_{n}}}.\label{eq:vsgType2error}
		\end{align}
		This implies that 
		\begin{equation}\label{eq:nu_nEqualquantiles}
		\nu_n = (q_{1-\alpha} - q_{\beta})^2/2 = (q_{1-\beta} - q_{\alpha})^2/2,
		\end{equation}
		since $q_{1-\gamma} = -q_{\gamma}$ for quantiles of $\mathcal{N}(0,1)$.
		On the other hand, by definition of $\nu_n$ we have
		\begin{equation*}
		\nu_{n} = 2t\sum_{i=1}^n\left(\sqrt{p_{1i}}-\sqrt{p_{0i}}\right)^2.
		\end{equation*}
		Using the Taylor series expansion \eqref{eq:Taylor series h} as $d\to0$ we get
		\begin{align}
		\left(\sqrt{p_{1i}}-\sqrt{p_{0i}}\right)^2 
		&= \left(\sqrt{\int_i h + \frac{d^2}{8}\int_i h''\, 
			+ o\left(\frac{d^2}{n}\right)}
		-\sqrt{\int_i h}\right)^2\nonumber\\
		&= \left(\sqrt{\int_i h}
		\sqrt{
			1 
			+\frac{d^2}{8}\frac{\int_i h''}{\int_i h}
			+ o\left(d^2\right)}
		-\sqrt{\int_i h}\right)^2  
		= \left(\frac{d^2}{16}\frac{\int_i h''}{\sqrt{\int_i h}}
		+ o\left(\frac{d^2}{\sqrt{n}}\right)\right)^2\nonumber\\
		&= \frac{d^4}{256}\frac{(\int_i h'')^2}{\int_i h}
		+ o\left(\frac{d^4}{n}\right), \label{eq:VSGtaylor}
		\end{align}
		where the $\left(\int_i h\right)^{-1}$ terms are well-defined by \eqref{eq:int_iGreater0}.
		Thus,
		\begin{align}\label{eq:nu_n}
		\nu_{n}
		&=
		2t \sum_{i=1}^n
		\left( \frac{d^4}{256}\frac{(\int_i h'')^2}{\int_i h}
		+ o\left(\frac{d^4}{n}\right)\right)
		= \frac{td^4}{128}\int_0^1\frac{(h'')^2}{h}
		+ o\left(td^4\right)
		\end{align}
		as $n\to\infty$ by \Cref{lem:summingRatio} with $f(x) = h''(x-x_0)$ and $g(x)=h(x-x_0)$.
		Rearranging the last equation for $d$ together with \eqref{eq:nu_nEqualquantiles} gives \eqref{eq:resolutionPoi}, as required.
	\end{proof}
	
	\begin{remark}\label{rmk:finiteNstabilized}
		Just as in the homogeneous Gaussian model (\Cref{rmk:finiteNhomogeneous}), we can keep $n$ finite in the above proof, provided that $d\searrow0$ and $t\to\infty$. Thus, for finite $n$ it holds
		\begin{equation*}
		d \asymp 2\sqrt{2}\,\sqrt{ q_{1-\beta} - q_{\alpha} } \left(\sum_{i=1}^n 
		\frac{\left(\int_i h''\right)^2}{\int_i h}
		\right)^{-1/4} t^{-1/4}.
		\end{equation*}
		From this equation by solving for $1-\beta$ and using \Cref{rmk:mainThmWithBackNoise} to incorporate constant background noise $\gamma > 0$, we can get an expression for the asymptotic power function ($t\to\infty$, $n=const.$)
		\begin{equation*}
		power(t,d) \asymp \Phi\left(q_{\alpha} + \sqrt{\sum_{i=1}^{n} \frac{(\int_i h'')^2}{\int_i (h + \gamma) }}\frac{d^2 \sqrt{t}}{8}\right).
		\end{equation*}
		This regime coincides with the one investigated in \citep{Acuna1997}, where Acu\~na and Horowitz looked at a 2D Poisson model on a line with constant background noise for telescopes. Their power function can be written as
		\begin{equation}\label{eq:powerAcuna}
		power(t, d) \asymp \Phi\left( q_{\alpha} + \sigma_0\frac{d^2\sqrt{t}}{8} \right)
		\end{equation}
		where $t$ is now interpreted as the telescope exposure time and $\sigma_0$ a constant which in 1D can be written as
		\[
		\sqrt{
			\kappa\sum_{i=1}^{n} 
			\left( \int_i (h + \gamma)\right)^{-1}
			\left(\frac{\partial^2}{\partial \left(\frac{d}{2}\right)^2}\Bigg\vert_{d=0}\left(\frac{1}{2}\int_{(i-1)/n}^{i/n} h\left(x-x_0 - \frac{d}{2}\right) + h\left(x-x_0 + \frac{d}{2}\right)\mathrm{d}x\right) \right)^2
		}
		\]
		with constant $\kappa >0$ describing the total intensity of the star in question.
		Reassuringly, this expression for $\sigma_0$ coincides (up to $\kappa$) for large $n$ with our factor $\sqrt{\sum_{i=1}^{n} \frac{(\int_i h'')^2}{\int_i (h + \gamma) }}$ by the mean value theorem, i.e. we are able to reproduce the main result of \cite{Acuna1997} from our more general ones, see also \Cref{sec:relatedWork}. 
	\end{remark}

	\subsection{Poisson model}\label{sec:poisson}
	
	The proof for the Poisson model is split into two parts. If $ t \gg \sqrt{n} \log^8 n $, then the Le Cam asymptotic equivalence between the Poisson and the VSG models holds and thus the proof follows from the VSG model. If $ t \ll n^{2-\delta}$ for some $\delta >0$, then a CLT holds and we can prove \Cref{thm:main} (a) directly.
	
	\subsubsection{Analysis in the asymptotic equivalence regime}\label{sub:PoiAsymptoticEquivalence}
	
	We briefly recall the theory of asymptotic equivalence developed by Le Cam \citep{LeCam1986}, \citep{LeCam2000}. We mostly follow the presentation of \citep{Grama2002}. In our context we consider a \textit{statistical experiment}---a set
	\[
	\mathcal{E} = (X, \mathcal{X}, \{ P_{\theta} : \theta \in \Theta\} ),
	\]
	where $(X, \mathcal{X})$ is a measurable space with the parameter set $\Theta\subset\reals$, a possibly unbounded interval, and $P_{\theta}$ is an absolutely continuous probability measure with respect to some dominating $\sigma$-finite measure $\mu$. Consider a second, possibly easier to tackle, experiment $\mathcal{G} = (Y, \mathcal{Y}, \{ Q_{\theta}:\theta\in\Theta\})$ over the same parameter space $\Theta$.
	Let further $(D, \mathcal{D})$ be a measurable space of possible decisions. Then the set of Markov kernels $\kappa:(X,\mathcal{X}) \to (D,\mathcal{D})$ is the set of randomized decision procedures for the experiment $\mathcal{E}$. We denote it by $\Pi(\mathcal{E})$. We let $\mathcal{L}(D, \mathcal{D})$ to be the set of all loss functions $L:\Theta\times D \to [0, \infty)$ such that $0 \le L(\theta, z) \le 1$ for all $\theta\in\Theta$ and $z\in D$. Given a decision procedure $\kappa \in \Pi(\mathcal{E})$, the true value $\theta\in\Theta$ and a loss function $L \in \mathcal{L}(D, \mathcal{D})$, the risk is
	\[
	R(\mathcal{E}, \kappa, L, \theta) = \int_X \int_D L(\theta, z) \kappa(x, \mathrm{d}z) P_{\theta} (\mathrm{d} x).
	\]
	We define the deficiency as
	\[
	\delta (\mathcal{E}, \mathcal{G}) 
	:= 
	\sup
	\sup_{L\in\mathcal{L}(D, \mathcal{D})}
	\inf_{\kappa_1 \in \Pi (\mathcal{E})}
	\sup_{\kappa_2 \in \Pi (\mathcal{G})}
	\sup_{\theta\in\Theta}
	|R(\mathcal{E}, \kappa_1, L, \theta)
	- R(\mathcal{G}, \kappa_2, L, \theta) |
	\]
	with the first supremum ranging over all possible decision spaces $(D, \mathcal{D})$. Since deficiency is asymmetric, we define the Le Cam (pseudo) distance as 
	\[
	\Delta(\mathcal{E}, \mathcal{G}) 
	:=
	\max \{ \delta(\mathcal{E}, \mathcal{G} ),
	\delta(\mathcal{G}, \mathcal{E} ) \}.
	\]
	
	\begin{definition}
		Two sequences of statistical experiments $\mathcal{E}^n$ and $\mathcal{G}^n$, $n\in\nat$, are \textit{asymptotically equivalent} if
		\[
		\Delta( \mathcal{E}^n, \mathcal{G}^n ) \to 0. 	
		\]
	\end{definition}
	We can summarize the implications of the above definition for our analysis in the following proposition.
	\begin{proposition}
		Let $\mathcal{E}^n_1$ and $\mathcal{E}^n_2$, $n\in\nat$, be two sequences of statistical experiments that are asymptotically equivalent, and let $\Psi^n_1$ and $\Psi^n_2$ be the corresponding optimal tests. Then we have
		\[
		\E{H_0}{\Psi^n_1} \to \alpha \quad\text{and}\quad \E{H_1}{\Psi^n_1} \to 1-\beta
		\quad\iff\quad
		\E{H_0}{\Psi^n_2} \to \alpha \quad\text{and}\quad \E{H_1}{\Psi^n_2} \to 1-\beta,
		\]
		i.e. the type I error of $\Psi^n_1$ converges to $\alpha$ and the type II error to $\beta$ if and only if the type I error of $\Psi^n_2$ converges to $\alpha$ and type II error to $\beta$.
		Thus, an asymptotic resolution sequence in the sense of \Cref{def:statResolution} for the first sequence of experiments will also be an asymptotic resolution for the second sequence.
	\end{proposition}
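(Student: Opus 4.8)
The plan is to read the testing problem \eqref{eq:H0vsH1} as a statistical decision problem over the \emph{two-point} parameter set $\Theta=\{H_0,H_1\}$ with decision space $D=\{0,1\}$, so that a randomized test is exactly a randomized decision procedure and its type~I error $\E{H_0}{\Phi}$ and type~II error $1-\E{H_1}{\Phi}$ are risks for the two bounded loss functions $L^{\mathrm I}(\theta,z)=\indicator_{\{\theta=H_0\}}\,z$ and $L^{\mathrm{II}}(\theta,z)=\indicator_{\{\theta=H_1\}}\,(1-z)$, both of which lie in $\mathcal L(D,\mathcal D)$. The statement then follows by combining $\Delta(\mathcal E^n_1,\mathcal E^n_2)\to0$ with the Neyman--Pearson lemma; it is essentially the specialization of the standard fact ``asymptotic equivalence preserves asymptotic risks for bounded loss'' to the testing loss.

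The first step exploits that $\Theta$ is finite, so that $\Delta(\mathcal E^n_1,\mathcal E^n_2)\to0$ is, by Le Cam's randomization criterion, equivalent to the existence of Markov kernels $\tilde K^n$ from the sample space of $\mathcal E^n_2$ to that of $\mathcal E^n_1$ (and $K^n$ in the opposite direction) with $\sup_{\theta\in\Theta}\norm{\tilde K^nP^{(2)}_\theta-P^{(1)}_\theta}_{\mathrm{TV}}\le C_n$ and the analogous bound for $K^n$, for some $C_n\to0$. Transporting a test $\Phi$ on $\mathcal E^n_1$ along $\tilde K^n$, i.e. passing to $y\mapsto\int\Phi(x)\,\tilde K^n(y,\diff x)$ on $\mathcal E^n_2$, changes each of its two error probabilities by at most $O(C_n)$ since $0\le\Phi\le1$. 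Applying this to the optimal test $\Psi^n_1$ and assuming $\E{H_0}{\Psi^n_1}\to\alpha$, $\E{H_1}{\Psi^n_1}\to1-\beta$, we obtain a test on $\mathcal E^n_2$ with level $\alpha+o(1)$ and power $1-\beta+o(1)$.

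The second step corrects the level mismatch and invokes optimality. Because $\alpha\in(0,\tfrac12)$ is fixed and bounded away from $0$, mixing the transported test with the trivial test $\Phi\equiv0$ (i.e. multiplying it by $\alpha/(\alpha+C_n)$, using $C_n/(\alpha+C_n)=o(1)$) yields a test on $\mathcal E^n_2$ of level $\le\alpha$ and power still $1-\beta-o(1)$. Since $\Psi^n_2$ is the likelihood ratio test calibrated to asymptotic level $\alpha$, the Neyman--Pearson lemma gives $\E{H_1}{\Psi^n_2}\ge1-\beta-o(1)$, while $\E{H_0}{\Psi^n_2}\to\alpha$ holds by construction. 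Running the symmetric argument --- transporting $\Psi^n_2$ back to $\mathcal E^n_1$ along $K^n$, trimming, and using optimality of $\Psi^n_1$ --- yields $\E{H_1}{\Psi^n_1}\ge\E{H_1}{\Psi^n_2}-o(1)$, and the two inequalities together force $\E{H_1}{\Psi^n_2}\to1-\beta$ whenever $\E{H_1}{\Psi^n_1}\to1-\beta$; by symmetry the reverse implication holds as well, which is the claimed equivalence. The final assertion is then immediate: the statistical resolution of \Cref{def:statResolution} is precisely the value of $d$ at which the optimal test attains level $\alpha$ and power $1-\beta$, so any $d$-sequence realizing this asymptotically for $\mathcal E^n_1$ realizes it for $\mathcal E^n_2$ as well.

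I expect the main obstacle to be the transfer step: turning the abstract deficiency bound into honest Markov kernels that move \emph{both} error probabilities by $o(1)$ simultaneously genuinely requires the randomization criterion, which is available here only because $\Theta$ is the two-point set $\{H_0,H_1\}$ (for general $\Theta$ one would get only the weaker loss-by-loss formulation). One must also be mildly careful with the level calibration, using that $\alpha$ is bounded away from $0$ so that trimming the transported test back to exact level $\alpha$ costs only $o(1)$ in power. The remaining ingredients --- Neyman--Pearson optimality and, for the resolution statement, the monotonicity and continuity of the power in $d$ already recorded after \Cref{def:statResolution} --- are routine.
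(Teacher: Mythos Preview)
The paper does not actually prove this proposition: it is stated immediately after the definition of asymptotic equivalence as a ``summary of the implications of the above definition'' and is used without further argument in the subsequent corollary. Your proposal therefore supplies strictly more than the paper does, and the argument you outline is correct. In particular, the reduction to the two-point parameter set $\Theta=\{H_0,H_1\}$, the use of the randomization criterion to obtain kernels that move both error probabilities by $o(1)$, the level-trimming step (which indeed needs $\alpha$ bounded away from $0$), and the back-and-forth Neyman--Pearson comparison to pin down both $\liminf$ and $\limsup$ of the power are all sound and constitute the standard way to cash out Le Cam's deficiency bound for a simple-vs-simple testing problem.
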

	
	The above proposition allows us to transfer the VSG result to the Poisson model in the asymptotic equivalence regime:
	
	\begin{corollary}\label{thm:gaussEquivalentToPoisson} 
		Let $0<\alpha,\beta<1/2$ be type I and II errors, respectively. Assume that $\sqrt{n} \log^{8} n=o\left(t\right)$ and \Cref{ass:h2} are valid. Then \Cref{thm:main} (a) holds.
	\end{corollary}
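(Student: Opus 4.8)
The plan is to deduce the corollary directly from the variance stabilized Gaussian case (\Cref{thm:main} (b), whose proof has just been completed) by invoking Le Cam asymptotic equivalence between the Poisson and the VSG experiments, and then transferring the asymptotic level and power of the likelihood ratio test via the Proposition on asymptotically equivalent experiments stated above.

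\textbf{Step 1: identify the two experiments and check the equivalence hypotheses.} For fixed $t$ and $n$, let $\mathcal E_1^n$ be the Poisson experiment observing $\left(Y_i\right)_{i=1}^n$ with $Y_i \sim \Poi\left(t\int_i g\right)$, and $\mathcal E_2^n$ the VSG experiment observing $\left(\tilde Y_i\right)_{i=1}^n$ with $\tilde Y_i \sim \Gauss\left(2\sqrt{t\int_i g},1\right)$, where $g$ ranges over the two image functions appearing in \eqref{eq:H0vsH1}. By \Cref{ass:h2} the intensity $x \mapsto t\,g(x)$ is bounded below by a positive multiple of $t$, bounded above, and --- together with its square root --- lies in a Hölder ball of smoothness $4 > 1/2$, uniformly over the finitely many $g$'s occurring in the testing problem. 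Under $\sqrt n \log^8 n = o(t)$ these are precisely the conditions under which the results of \citep{Ray2018} yield $\Delta\left(\mathcal E_1^n,\mathcal E_2^n\right) \to 0$; this is the step demanding the most care, since one must confirm that the per-bin intensities $t\int_i g \asymp t/n$, which need not tend to infinity, still fall within the scope of the cited equivalence theorem, and that the positivity and smoothness assumptions carry over to the root intensity as that theorem requires. Pinning down that $\sqrt n \log^8 n = o(t)$ is the correct translation of the hypotheses of \citep{Ray2018} into the present notation (total observed intensity of order $t$, number of bins $n$) is, as indicated, the main obstacle; everything after it is formal.

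\textbf{Step 2: transfer level and power.} Given the equivalence, the Proposition above shows that the optimal (LRT) test $\Psi_1^n$ for \eqref{eq:H0vsH1} in the Poisson model satisfies $\E{H_0}{\Psi_1^n} \to \alpha$ and $\E{H_1}{\Psi_1^n} \to 1-\beta$ if and only if the optimal test $\Psi_2^n$ in the VSG model does. By \Cref{thm:main} (b) the latter holds exactly when $d$ is chosen as in \eqref{eq:resolutionPoi}. Hence the same choice of $d$ yields asymptotic level $\alpha$ and power $1-\beta$ for the Poisson LRT, which is the assertion that the statistical resolution in the Poisson model satisfies \eqref{eq:resolutionPoi}, i.e. \Cref{thm:main} (a). It remains only to invoke the monotonicity and continuity of the power in $d$ noted after \Cref{def:statResolution}, so that the sequence $d$ solving the finite-sample equations of \Cref{def:statResolution} and the sequence prescribed by \eqref{eq:resolutionPoi} are asymptotically identical rather than merely compatible.
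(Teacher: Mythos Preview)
Your proposal is correct and follows essentially the same route as the paper: invoke the Le Cam asymptotic equivalence of the Poisson and VSG experiments via \citep{Ray2018}, check that the binned intensities $t\int_i g \asymp t/n$ satisfy the required lower bound precisely when $\sqrt{n}\log^8 n = o(t)$, and then transfer the already-proved VSG resolution \eqref{eq:resolutionPoi} through the Proposition on asymptotically equivalent experiments. The paper's version differs only in cosmetic details---it explicitly recasts the observations through a function $f_n$ on $[0,1]$ (extending $g$ slightly to the left) and applies Ray's theorem with H\"older exponent $\beta = 1$ rather than $4$, since that result is stated for $1/2 < \beta \le 1$.
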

	
	\begin{proof}
		Our \hyperref[model:VSG]{VSG model} can be viewed as a Gaussian model
		\[
		Y_i \overset{\text{indep.}}\sim \Gauss\left(2\sqrt{f_n(i/n)},1\right)
		\]
		with 
		\begin{equation}\label{eq:f_nDefinition}
		f_n(x)=t\int_{x-1/n}^{x}g(y)\,\mathrm{d}y
		\end{equation} for $x\in[1/n, 1]$.
		According to Example 4.2 of \citep{Grama2002}, a sequence of $n$ Poisson observations
		\begin{equation*}
		X_i \overset{\text{indep.}}\sim \Poi\left(f(i/n)\right)
		\end{equation*}
		is asymptotically equivalent to the above Gaussian model with some \textit{fixed function} $f:[0,1]\to\reals$ provided that $f$ is bounded $c_1\le f(x) \le c_2$ by some absolute constants $c_1,c_2>0$ and it is H\"older with exponent $\beta > 1/2$. This result was extended in Theorem 4 of \citep{Ray2018} to include functions $f$ which are not bounded away from zero: functions $f=f_n$ that may depend on $n\in\nat$, satisfy
		\begin{equation}\label{eq:lowerBoundSchmidt}
		\inf_{x\in[0,1]} f_n(x) \gg n^{-\beta/(\beta+1)}\log^8 n
		\end{equation}
		and $f_n$ are H\"older with $ 1/2<\beta\le 1$. Thus, we only need to extend our $f_n$'s to functions on $[0,1]$ and prove that they satisfy the assumptions of Theorem 4 of \citep{Ray2018} to complete the proof.
		
		As a first step, extend the image function $g$ to a function on $C^2[-1/n, 1]$ such that
		\begin{equation}\label{eq:gExtension}
		n^{-1/2} \log^{8+\delta} n \le t \int_{-1/n}^{0} g(y)\,\mathrm{d}y \le t,
		\end{equation}
		for some $\delta > 0$. Then we can extend $f_n$'s in \eqref{eq:f_nDefinition} to $f_n:[0,1]\to\reals$. 
		
		We have that $f_n\le t$ since $\int_0^1 g = 1$ and $g > 0$, and $f_n\in C^3[0,1]$ since $g\in C^2[-1/n,1]$. Hence, $f_n$ is H\"older with $\beta = 1$.
		Due to the psf $h$ being fixed, our testing problem \eqref{eq:H0vsH1} and \Cref{ass:h2}, for all $x\in [1/n,1]$ it holds that
		\[
		\int_{x-1/n}^{x} g(y)\,\mathrm{d}y \ge \frac{\min_{x\in[0,1]} g(x)}{n}.
		\]
		Due to the continuity of $g$ and compactness of $[0,1]$, $\min_{x\in[0,1]} g(x) \ge c$ for some constant $c>0$. Thus,
		\[
		\inf_{x\in[0,1]} f_n = \inf_{x\in[0,1]} t \int_{x-1/n}^x g\left(y\right) \,\mathrm d y \geq \min\left\{c \frac{t}{n}, n^{-1/2} \log^{8+\delta} n \right\} \gg n^{-\frac12} \log^8\left(n\right),
		\]
		by \eqref{eq:gExtension} and our assumption $\sqrt{n} \log^{8} n = o(t)$, thereby showing that \eqref{eq:lowerBoundSchmidt} holds. Therefore, in this case the Poisson model is equivalent to the VSG model for which \Cref{thm:main} (a) holds by the above proof.
	\end{proof}
	
	\subsubsection{Analysis in the central limit theorem regime}\label{sec:CLTanalysis}
	
	To finish the proof of \Cref{thm:main} under the Poisson model, in this section we will prove a CLT in a different parameter regime than in the regime treated previously based on asymptotic equivalence. The regimes of present and previous sections cover the whole parameter domain, thereby completing the proof.

	Here we have (recall \eqref{eq:model}) $F_{t\theta} = \Poi(t\theta)$, or more explicitly
	\begin{equation}\label{eq:poissonRandomVars}
	Y_i \overset{\text{indep.}}\sim \Poi(\lambda_i )
	\quad
	\text{with}
	\quad
	\lambda_i = t \int_{(i-1)/n}^{i/n} g(x)\,\mathrm{d}x.
	\end{equation}
	Note that $\lambda_{1i} = t p_{1i}$ and $\lambda_{0i} = t p_{0i}$. 
	The likelihood ratio statistic for \eqref{eq:H0vsH1} under the model \eqref{eq:poissonRandomVars} is
	\begin{align*}
	\statPoi(Y) 
	&=
	\log\left(\prod_{i=1}^{n} e^{-(\lambda_{1i}-\lambda_{0i})}\left(\frac{\lambda_{1i}}{\lambda_{0i}}\right)^{Y_i}\right)
	= \sum_{i=1}^{n}Y_i\log\left(\frac{\lambda_{1i}}{\lambda_{0i}}\right).
	\end{align*}

	\begin{theorem}[CLT for Poisson LR]\label{thm:CLTforPoi}
		Assume a psf $h$ satisfies \Cref{ass:h2} and that $n = t^{1/2+\delta}$ for some $\delta > 0$.
		Then a CLT holds for $\statPoi(Y)$ under the hypothesis \eqref{eq:H0} and the alternative \eqref{eq:H1} as $t,n\to\infty$ and $d\to 0$, i.e.
		\[
		\frac{\statPoi-\Exp{\statPoi}}{\sqrt{\Var{\statPoi}}} 
		\stackrel{\mathcal{D}}{\to}
		\mathcal{N}(0,1).
		\]
	\end{theorem}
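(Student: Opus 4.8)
The plan is to establish the CLT by regarding $\statPoi(Y)=\sum_{i=1}^{n} w_i Y_i$, with weights $w_i:=\log(\lambda_{1i}/\lambda_{0i})=\log(p_{1i}/p_{0i})$, as a sum of independent (non-identically distributed) summands --- under $H_0$ resp.\ $H_1$ one has $Y_i\sim\Poi(\lambda_{ki})$ with $\lambda_{ki}=tp_{ki}$, $k\in\{0,1\}$ --- and to verify a Lyapunov condition. Since the claimed convergence concerns the \emph{self-normalized} statistic, which is unchanged if all weights $w_i$ are multiplied by a common constant, only the ``shape'' of the $w_i$ and of the $\lambda_{ki}$ matters. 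Writing $\sigma_{k,n}^{2}:=\V{H_k}{\statPoi}=\sum_{i=1}^{n}w_i^{2}\lambda_{ki}$, I would show
\[
L_n:=\frac{1}{\sigma_{k,n}^{3}}\sum_{i=1}^{n}|w_i|^{3}\,\Exp{|Y_i-\lambda_{ki}|^{3}}\;\longrightarrow\;0,\qquad k\in\{0,1\},
\]
and then invoke the Lyapunov CLT for triangular arrays of independent variables to obtain the assertion under both $H_0$ and $H_1$. (The same scheme with fourth absolute moments in place of third is equally valid and a touch cleaner, yielding $L_n\le C\bigl(t^{-1}+n^{-1}\bigr)$.)

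Three estimates feed into the bound on $L_n$. First, expanding $h$ about $x_0$ and using that $h$ is $C^4$, even, and $x_0=(x_1+x_2)/2$ (so the odd-order terms cancel exactly, as in \eqref{eq:VSGtaylor}) gives, uniformly in $i$,
\[
p_{1i}=\int_i h+\frac{d^{2}}{8}\int_i h''+o\!\left(\frac{d^{2}}{n}\right),\qquad\text{hence}\qquad w_i=\frac{d^{2}}{8}\,\frac{\int_i h''}{\int_i h}+o(d^{2}),
\]
the $C^{4}$-regularity being what makes the remainder uniform across all $n$ bins; in particular $|w_i|\le Cd^{2}$ for every $i$, so the weights share the common scale $d^{2}$, which is exactly why $d$ cancels in $L_n$ and no rate on $d$ beyond $d\to0$ is needed. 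Second, \Cref{ass:h2} together with compactness of $[0,1]$ forces $0<c\le h\le C$, so that $\lambda_{ki}=t\int_i h(\cdot-x_k)$ is of order $t/n$, uniformly in $i$ and in $k$. Third, combining these and passing from the bin sums to an integral by \Cref{lem:summingRatio},
\[
\sigma_{k,n}^{2}=\sum_{i=1}^{n}w_i^{2}\lambda_{ki}=\frac{d^{4}t}{64}\left(\sum_{i=1}^{n}\frac{\left(\int_i h''\right)^{2}}{\int_i h}\right)\bigl(1+o(1)\bigr)=\frac{d^{4}t}{64}\left(\int_{0}^{1}\frac{h''(x-x_0)^{2}}{h(x-x_0)}\diff x\right)\bigl(1+o(1)\bigr),
\]
and the limiting integral is strictly positive because $h$ is non-constant; thus $\sigma_{k,n}^{2}\sim d^{4}t$. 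The computation is identical under $H_0$ and $H_1$, since $\lambda_{0i}$ and $\lambda_{1i}$ are of the same order.

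It remains to put the pieces together. The Poisson moment bound $\Exp{|Y_i-\lambda_{ki}|^{3}}\le\lambda_{ki}+2\lambda_{ki}^{3/2}$, with $\lambda_{ki}\le Ct/n$ and $|w_i|\le Cd^{2}$, yields
\[
\sum_{i=1}^{n}|w_i|^{3}\,\Exp{|Y_i-\lambda_{ki}|^{3}}\;\le\;Cd^{6}\left(n\cdot\frac{t}{n}+n\left(\frac{t}{n}\right)^{3/2}\right)=Cd^{6}\left(t+t^{3/2}n^{-1/2}\right),
\]
while $\sigma_{k,n}^{3}\sim(d^{4}t)^{3/2}=d^{6}t^{3/2}$, so that $L_n\le C\bigl(t^{-1/2}+n^{-1/2}\bigr)\to0$; the Lyapunov CLT then gives the claim. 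These bounds in fact use only $t,n\to\infty$ and $d\to0$; the stated relation $n=t^{1/2+\delta}$ simply places us in the regime complementary to the Le Cam equivalence regime $\sqrt{n}\log^{8}n=o(t)$ of \Cref{thm:gaussEquivalentToPoisson}, so that the two together cover all admissible $(t,n)$.

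The point requiring the most care is the uniform handling of the weights: one must check that the $o(d^{2}/n)$ Taylor remainder in $p_{1i}$ is negligible against the leading term \emph{simultaneously over all $n$ bins} --- in particular across the bins near the peak of $h$, where $\int_i h''$ is small and changes sign --- and one must argue that $\int_{0}^{1}h''(x-x_0)^{2}/h(x-x_0)\diff x$ is bounded away from $0$, so that $\sigma_{k,n}$ does not degenerate faster than $d^{2}\sqrt{t}$. Both follow from \Cref{ass:h2} (the $C^{4}$-smoothness and $h>0$ for the uniform remainder and for the positivity of the denominators, non-constancy of $h$ for the positive limiting integral), but these are the quantitative, rather than soft, ingredients of the argument.
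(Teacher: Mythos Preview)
Your proof is correct and takes a genuinely different route from the paper. The paper verifies Lindeberg's condition directly: after establishing the same asymptotics $|w_i|\asymp d^{2}$, $\lambda_{ki}\asymp t/n$ and $\tau_n^{2}\asymp td^{4}$ that you use, it bounds the truncated second-moment functional by an explicit tail analysis of the Poisson distribution, splitting the truncation region $\{|a_i l-\mu_{ni}|>\veps\tau_n\}$ into an upper and a lower piece and controlling the upper piece by a Stirling-type estimate. That argument actually \emph{uses} the assumption $n=t^{1/2+\delta}$: the lower piece is empty only because $t/n=t^{1/2-\delta}=o(\sqrt{t})$. Your Lyapunov approach bypasses this split entirely; the bound $L_n\le C\bigl(t^{-1/2}+n^{-1/2}\bigr)$ holds for any sequences $t,n\to\infty$, so you in fact obtain the CLT over the full parameter range rather than just the regime complementary to the Le Cam equivalence of \Cref{thm:gaussEquivalentToPoisson}. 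The only off-the-shelf input you invoke that the paper does not is the Poisson third-moment bound, which is correct as stated (e.g.\ from $\Exp{|Y-\lambda|^{3}}\le\sqrt{\Exp{(Y-\lambda)^{2}}\,\Exp{(Y-\lambda)^{4}}}=\lambda\sqrt{1+3\lambda}\le\lambda+2\lambda^{3/2}$). Your discussion of the two delicate points---uniformity of the $o(d^{2})$ remainder in $w_i$ across all bins, and positivity of the limiting integral $\int_0^1 (h'')^{2}/h$ ensuring $\sigma_{k,n}^{2}\sim td^{4}$---is accurate and matches what the paper needs as well.
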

	
	
	\begin{proof}[Proof of \Cref{thm:CLTforPoi}]
		We apply the Lindeberg-Feller CLT for triangular arrays (see \citep{Billingsley1986}). For ease of readability, we again skip indices $t$ and $d$ in what follows.
		
		Let
		\[
		X_{ni} = a_i Y_i,
		\]
		so that
		\[
		T_{n}(Y) = \sum_{i=1}^{n}X_{ni},
		\]
		where as before
		\begin{equation*}
		Y_i \overset{\text{indep.}}\sim \Poi(\lambda_i ) 
		\quad\text{ and }\quad
		a_i := \log\left(\frac{\lambda_{1i}}{\lambda_{0i}}\right).
		\end{equation*}
		Note that $\lambda_{\cdot i}$ and $a_i$ depend on $n$ as well. We also set $\mu_{ni} = \Exp{ X_{ni}}$, $\sigma^2_{ni} = \Var{X_{ni}}$ and $\tau_n^2 = \sum_{i=1}^{n}\sigma_{ni}^2$. We need to show (see e.g. \citep{Billingsley1986}) that $\sigma_{ni}^2<\infty$ and that for all $\veps > 0$ we have
		\[
		L_n(\veps) = \frac{1}{\tau_n^2}\sum_{i=1}^{n}\int (x-\mu_{ni})^2\, \indicator_{\{|x-\mu_{ni}| > \veps \tau_n\}} \mathrm{d} \Prob{X_{ni}}{x} \to 0 \quad\text{as } n\to\infty.
		\]
		We use the Taylor approximation $\log(1+y_i) = \sum_{k=0}^2(-1)^k/(k+1)y_i^{k+1} + o\left(y_i^3\right)$ together with \eqref{eq:Delta2} to get
		\begin{equation}\label{eq:definitionOfy_iPoi}
		y_i 
		= \frac{\lambda_{1i}}{\lambda_{0i}} - 1
		= \frac{\lambda_{1i}-\lambda_{0i}}{\lambda_{0i}} 
		= \frac{\int_i\Delta}{\int_i h}
		= \frac{1}{8}\frac{\int_i h''}{\int_i h}d^2
		+ \frac{1}{384}\frac{\int_i h''''}{\int_i h}d^4
		+ o\left(d^4\right).
		\end{equation}
		The $\left(\int_i h\right)^{-1}$ terms are well-defined by \eqref{eq:int_iGreater0}.
		
		Under the hypothesis $H_0$ it holds
		\begin{align*}
		\mu_{ni} 
		&= \E{H_0}{X_{ni}} 
		= a_i\E{H_0}{Y_i}
		= a_i \lambda_{0i}
		= a_i t\int_i h
		= \log(1+y_i) t \int_{i}h\\
		&=\frac{t d^2}{8}\int_i h'' 
		+ t d^4\left(-\frac{1}{128}\frac{(\int_i h'')^2}{\int_i h} 
		+ \frac{1}{384}\int_i h''''\right) 
		+ O\left(\frac{td^6}{n}\right),\\	
		\nu_n 
		&= \sum_{i=1}^{n}\mu_{ni} 
		= \E{H_0}{T_n(Y)}
		= t\sum_{i=1}^{n}a_i \int_i h
		= \frac{t d^2}{8}\int_0^1 h'' 
		+ t d^4\left(-\frac{1}{128}\ratioSum_n
		+ \frac{1}{384}\int_0^1 h''''\right) 
		+ O\left(td^6\right),\\
		\sigma^2_{ni} 
		&= \V{H_0}{X_{ni}} 
		= a_i^2\,\V{H_0}{Y_i}
		= a_i^2  \lambda_{0i}
		=  t\log(1+y_i)^2\int_i h
		= \frac{td^4}{64}\frac{(\int_i h'')^2}{\int_i h} 
		+ O\left(\frac{td^6}{n}\right),\\
		\tau_n^2 
		&= \sum_{i=1}^{n}\sigma_{ni}^2
		= \V{H_0}{T_n(Y)}
		= t\sum_{i=1}^{n} a_i^2\int_i h 
		= t\sum_{i=1}^{n} \log(1+y_i)^2\int_i h 
		= \frac{t d^4}{64} \ratioSum_n + O(td^6)
		\end{align*}
		with 
		\begin{equation*}
		\ratioSum_n := \sum_{i=1}^{n}\frac{\left(\int_{(i-1)/n}^{i/n}h''(x-x_0)\,\mathrm{d}x   \right)^2}{\int_{(i-1)/n}^{i/n} h(x-x_0)\,\mathrm{d}x}.
		\end{equation*}
		Clearly, it holds that $\sigma_{ni}^2 <\infty$. 
		Applying \Cref{lem:summingRatio} with $f(x)=h''(x-x_0)$ and $g(x)=h(x-x_0)$ we see that
		\[
		\ratioSum_n
		=
		\int_{0}^{1}\frac{h''(x-x_0)^2}{h(x-x_0)}\,\mathrm{d}x
		+
		o(1) < \infty
		\]
		and hence
		\[
		\tau_n^2 = \frac{td^4}{64}\int_{0}^{1}\frac{h''(x-x_0)^2}{h(x-x_0)}\,\mathrm{d}x
		+
		O(td^6). 
		\]
		We consider
		\begin{align}\label{eq:Ln}
		L_n(\veps) 
		&= \frac{1}{\tau_n^2}\sum_{i=1}^{n}\sum_{\substack{k\in a_i\nato\\|k-\mu_{ni}|>\veps\tau_n}} (k-\mu_{ni})^2\,\Prob{H_0}{X_{ni}=k}\nonumber\\
		&= \frac{1}{\tau_n^2}\sum_{i=1}^{n}\sum_{\substack{l\in\nato\\
				|a_i l-\mu_{ni}|>\veps\tau_n}} (a_i l-\mu_{ni})^2\,\Prob{H_0}{Y_i=l}.
		\end{align}
		Note that if $a_i = 0$, then $ \left|a_i l - \mu_{ni}\right| = 0$. If $a_i \ne 0$, then the condition $ \left|a_i l - \mu_{ni}\right| > \veps\tau_n $ on $l$ is equivalent to $l\in\domain$, where $\domain$ is the set consisting of all $l\in\nato$ satisfying
		\begin{equation*}
		\begin{dcases}
		l 
		>
		\veps\sqrt{t}\frac{\sqrt{\sum_{i=1}^{n}a_i^2\int_i h}}{|a_i|} + t\int_i h\\
		l 
		<
		-\veps\sqrt{t}\frac{\sqrt{\sum_{i=1}^{n}a_i^2\int_i h}}{|a_i|} + t\int_i h.
		\end{dcases}
		\end{equation*}
		It holds that
		\[
		a_i = \log(1+y_i) 
		= y_i + O(y_i^2) 
		= \frac{\int_i h''}{\int_i h}\frac{d^2}{8} + o\left(d^2\right),
		\]
		\begin{equation}\label{eq:boundOna_i}
		\sum_{i=1}^{n}a_i^2\int_i h
		= \sum_{i=1}^{n}\left( y_i^2 + O(y_i^3) \right) \int_i h
		= \sum_{i=1}^{n} 
		\frac{(\int_i h'')^2}{\int_i h}\frac{d^4}{64}
		+ o\left(d^4\right)
		\end{equation}
		and thus
		\[
		R_i:=\frac{\sqrt{\sum_{i=1}^{n}a_i^2\int_i h}}{|a_i|} = O(1).
		\]
		Hence, the domain $\domain$ is a subset of those indices $l\in\nato$ such that
		\begin{equation}\label{eq:poissonProofDomain2}
		\begin{cases}
		\veps\sqrt{t}R + \frac{t}{n}q < l < \infty \\
		0 \le l < -\veps\sqrt{t}R + \frac{t}{n}\bar{q},
		\end{cases}
		\end{equation}
		where $R=\min_{i\in\{1,\ldots,n\}} R_i$, $q = \min_{x\in[0,1]} h(x-x_0)$ and $\bar{q} = \max_{x\in[0,1]} h(x-x_0)<\infty$, since $h(\cdot - x_0)\in C^4 [0,1]$.
		For $n=t^{1/2+\delta}$ with $\delta>0$ arbitrary, there are no $l$'s satisfying the second inequality of \eqref{eq:poissonProofDomain2} for sufficiently large $t$.
		Hence, setting $l_0 = \lceil \sqrt{t}(\veps R + t^{-\delta}q) \rceil$ it holds that
		\begin{align*}
		L_n(\veps) 
		&\le
		\frac{1}{\tau_n^2}
		\mathlarger{\sum}_{i=1}^{n}
		\mathlarger{\sum}_{l=l_0}^{\infty}
		a_i^2 \left(l-t \int_i h\right)^2
		e^{-\lambda_{i0}} \frac{\lambda_{i0}^l}{l!}=
		\frac{t^2}{\tau_n^2}
		\mathlarger{\sum}_{i=1}^{n} a_i^2
		\mathlarger{\sum}_{l=l_0}^{\infty}
		\left(\frac{1}{t}- \frac{\int_i h}{l}\right)^2
		e^{-\lambda_{i0}} \frac{\lambda_{i0}^ll}{(l-1)!}\nonumber
		.
		\end{align*}
		Moreover,
		
		\[
		\left(\frac{1}{t}- \frac{\int_i h}{l}\right)^2 
		\le 
		\frac{1}{t^2} - 2\frac{q}{t l n} + \frac{\bar{q}^2}{l^2 n^2}
		=  o(1).
		\]
		Note that \eqref{eq:boundOna_i} also implies that
		\[
		\sum_{i=1}^{n}a_i^2 = O(nd^4).
		\]
		Thus, we have that
		\begin{align*}
		L_n(\veps) 
		\le
		c' \frac{t^2}{\tau_n^2}
		\sum_{i=1}^{n} a_i^2
		\sum_{l=l_0}^{\infty}
		e^{-\lambda_{i0}} \frac{\lambda_{i0}^ll}{(l-1)!}
		\le
		c n t 
		\sum_{l=l_0}^{\infty}
		\frac{l}{(l-1)!}\left(\frac{t}{n}\bar{q}\right)^l
		=
		c t^{3/2+\delta}
		\sum_{l=l_0}^{\infty}
		\frac{l}{(l-1)!}\left(t^{1/2-\delta}\bar{q}\right)^l,
		\end{align*}
		for some constants $c, c' > 0$.	Consider
		\begin{align*}
		\sum_{l=a}^{\infty}\frac{(t^{1/2-\delta}\bar{q})^l}{l!}
		&=
		\frac{\left(t^{1/2-\delta}\bar{q}\right)^a}{a!}
		\left(1+\sum_{l=a+1}^{\infty}
		\frac{(t^{1/2-\delta}\bar{q})^l}{l!}
		\frac{a!}{(t^{1/2-\delta}\bar{q})^a}
		\right)\\
		&=
		\frac{\left(t^{1/2-\delta}\bar{q}\right)^a}{a!}
		\left(
		1+
		\frac{t^{1/2-\delta}\bar{q}}{a+1} + \frac{( t^{1/2-\delta}\bar{q})^2}{(a+1)(a+2)}+
		\ldots
		\right).
		\end{align*}
		Setting $a=\lceil\sqrt{t}(\veps+t^{-\delta})\rceil$, second and further terms in the brackets are of order $(\veps^{-1}t^{-\delta})^k$ and so we get
		\[
		1+\sum_{k=1}^{\infty}(\veps^{-1} t^{-\delta})^{k} = \frac{1}{1-\veps^{-1} t^{-\delta}} = O(1).
		\]
		Using Stirling's approximation $\log m! = m \log m - m + O(\log m)$
		we have that	
		\begin{align*}
		\frac{\left(t^{1/2-\delta}\right)^a}{a!} 
		&=
		\frac{\exp\left(a\log(t^{1/2-\delta})\right)}{\exp(a\log a - a + O(\log a))}
		=
		\exp\left(a\left(\log t^{1/2-\delta} - \log a + 1\right) + O(\log a)\right)\\
		&=
		\exp
		\left(
		\sqrt{t}(\veps+t^{-\delta})
		\left(
		-\log(t^{\delta}\veps+1)
		+1
		+o(1)
		\right)
		\right)
		= O\left((t^{\delta}\veps+1)^{-\sqrt{t}(\veps+t^{-\delta})}\right)
		.
		\end{align*}
		In our case the terms are of the form
		\[
		t^{1-2\delta}\bar{q}^2\sum_{k=a}^{\infty}\frac{(t^{1/2-\delta}\bar{q})^k}{k!}\frac{k+2}{k+1},
		\]
		with $a=\left\lceil \sqrt{t} ( \veps R + t^{-\delta}q )  \right\rceil-2
		\sim\lceil\sqrt{t}(\veps+t^{-\delta})\rceil$ and $(k+2)/(k+1)\le 2$. Thus, the above considerations apply and all together we get
		\[
		L_n(\veps)
		\le
		O
		\left(
		t^{5/2-\delta}
		(t^{\delta}\veps+1)^{-\sqrt{t}(\veps+t^{-\delta})})
		\right)\to 0 \quad \text{as}\quad t,n\to\infty,\; d\to0.
		\]
		
		Under the hypothesis $H_1$ we have
		\begin{align*}
		\mu_{ni} 
		&= \E{H_1}{X_{ni}} = a_i \lambda_{1i} 
		= a_i(1+y_i)\lambda_{0i}
		= \frac{t d^2}{8}\int_i h'' 
		+ t d^4\left(\frac{1}{128}\frac{(\int_i h'')^2}{\int_i h}
		+ \frac{1}{384}\int_i h''''\right) 
		+ O\left(\frac{td^6}{n}\right),\\	
		\nu_n 
		&= \sum_{i=1}^{n}\mu_{ni} 
		= \E{H_1}{T_n(Y)}
		= \sum_{i=1}^{n} a_i(1+y_i)\lambda_{0i}
		= \frac{t d^2}{8}\int_0^1 h'' 
		+ t d^4\left(\frac{1}{128}\ratioSum_n 
		+ \frac{1}{384}\int_0^1 h''''\right) 
		+ O\left(td^6\right),\\
		\sigma^2_{ni} 
		&= \V{H_1}{X_{ni}} 
		= a_i^2 \lambda_{1i}
		= a_i^2(1+y_i)\lambda_{0i}
		= \frac{td^4}{64}\frac{(\int_i h'')^2}{\int_i h} + O\left(\frac{td^6}{n}\right),\\
		\tau_n^2 
		&= \sum_{i=1}^{n} a_i^2(1+y_i)\lambda_{0i}
		=\sum_{i=1}^{n}\sigma_{ni}^2 
		= \frac{td^4}{64} \ratioSum_n 
		+ O\left(td^6\right)
		\end{align*}
		and hence similar considerations prove Lindeberg's condition in this case.
	\end{proof}	
	
	\begin{remark}\label{rmk:CLTequivPoi}
		Due to 
		$\sigma^2_{ni}/\tau_n^2\to 0$
		Lindeberg's condition is necessary for the CLT to hold.
	\end{remark}
	
	Now we can analyze the Poisson LRT
	\begin{equation*}
	\Phi_{t,n,d}(Y):=
	\begin{cases}
	1\quad\text{if}\;\; \statPoi(Y) > q_{\alpha,t,n,d}^*,\\
	0\quad\text{otherwise},
	\end{cases}
	\end{equation*}
	in the CLT regime above.
	Here
	\begin{equation}\label{eq:q_defintion_Poisson}
	q_{\alpha, t,n,d}^* := q_{1-\alpha}\sqrt{\V{H_0}{\statPoi}}+\E{H_0}{\statPoi}.
	\end{equation}

	\begin{proof}[Proof of \Cref{thm:main} (Poisson model in the CLT regime)]
		Again we skip the indices of $t$ and $d$.
		
		We want to find such $q_{\alpha, n}^*$ that
		\begin{equation}\label{eq:alphaPoi}
		\Prob{H_0}{\mathbf{reject}} 
		= \Prob{H_0}{T_{n}\left(Y\right) > q_{\alpha,n}^*} = \alpha
		\end{equation}
		and
		\begin{equation}\label{eq:betaPoi}
		\Prob{H_1}{\mathbf{accept}} 
		= \Prob{H_1}{T_{n}\left(Y\right) \le q_{\alpha,n}^*} = \beta
		\end{equation}
		hold. By the CLT \ref{thm:CLTforPoi}, \Cref{eq:alphaPoi} holds asymptotically, i.e. for sufficiently large $t,n$ and sufficiently small $d$, \eqref{eq:alphaPoi} holds exactly with some $\tilde{q}_{\alpha, n}^* = q_{\alpha,n}^* +o(1)$. Similarly, by the CLT \ref{thm:CLTforPoi} under $H_1$ we get \Cref{eq:betaPoi} with $q_{\alpha, n}^* := \sqrt{\V  {H_1}{T_n}}q_{\beta} + \E  {H_1}{T_n}$.
		For the quantile to be well-defined, we need to figure out when
		\[
		\sqrt{\V  {H_1}{T_n}}q_{\beta} + \E  {H_1}{T_n} = \sqrt{\V  {H_0}{T_n}}q_{1-\alpha} + \E  {H_0}{T_n} + o(1).
		\]
		Using previous calculations it holds that
		\begin{equation*}
		\E{H_1}{T_n} - \E{H_0}{T_n} 
		=
		\sum_{i=1}^n y_i\log(1+y_i)\lambda_{0i} 
		= 
		\sum_{i=1}^n\lambda_{0i}\left(y_i^2   + O(y_i^3)\right)
		\end{equation*}
		and
		\begin{equation}\label{eq:diffOfVarPoi}
		\sqrt{\V{H_0}{T_n}} = 
		\sqrt{\sum_{i=1}^n \lambda_{0i}\left(y_i^2 + O(y_i^3)\right)}
		= 	\sqrt{\V{H_1}{T_n} }
		.
		\end{equation}
		Thus, the quantile is well-defined if
		\begin{align}
		q_{1-\alpha}\sqrt{\V{H_0}{T_n}} - q_{\beta}\sqrt{\V{H_1}{T_n}}
		&=
		\E{H_1}{T_n} - \E{H_0}{T_n} + o(1)
		\Longleftrightarrow	\label{eq:expDiffGreaterThanSd}\\
		\sqrt{\sum_{i=1}^n \lambda_{0i}\left(y_i^2 + O(y_i^3)\right)}
		&=
		q_{1-\alpha} - q_{\beta}
		\Longleftrightarrow\nonumber\\
		\frac{\sqrt{t}d^2}{8}
		\sqrt{\int_0^1\frac{h''(x-x_0)^2}{h(x-x_0)}\,\mathrm{d}x}
		+o(d^4)
		&=
		q_{1-\alpha} - q_{\beta}
		= q_{1-\beta} - q_{\alpha}.\nonumber
		\end{align}
		Solving for $d$, we get the desired resolution relation \eqref{eq:resolutionPoi}.
	\end{proof}

	\section{Auxiliary proofs}\label{sec:auxProofs}
	
	\subsection{Proof that symmetrically placed signals is the hardest case asymptotically}\label{sec:symmetricallyPlacedHardest}
	
	\begin{proof}[Proof of \Cref{thm:symmetricallyPlacedHardest}]
		We first prove the statement for the \textbf{homogeneous Gaussian model}. Let
		\[
		\lambda :=  \left|x_0 - \frac{x_1 + x_2}{2}\right|.
		\]
		In general, using \eqref{eq:Delta}, $\mu_n$ as defined in \eqref{eq:mu_nDefinition} for large $n$ is equal to
		\begin{align*}
		\mu_n 
		&=
		\frac{t^2}{2n}
		\Bigg(
		\frac{\lambda^4}{4}\int_{0}^{1}(h'')^2
		+\frac{d^2\lambda^2}{8}\int_{0}^{1}(h'')^2
		+\frac{d^4}{64} \int_{0}^{1}(h'')^2
		+\lambda^2 \int_{0}^{1}(h')^2
		+\left( \frac{d^2}{4} + \lambda^2\right)\lambda \int_{0}^{1}h'' h'\\ 
		&+ O\left(\text{higher order terms} \right)
		\Bigg).
		\end{align*}
		Since the psf $h$ is even, it holds that $\int_{0}^{1}h'(x-0.5)\, h''(x-0.5)\,\mathrm{d}x= 0$. Considering $\mu_n$ as a function of $\lambda$, we find its minimum at $\lambda = 0 + O(d^2)$. Since
		\begin{align*}
		\text{under }&H_0: T_n\left(Y\right) \sim \mathcal N \left(-\mu_n, 2\mu_n\right), \\
		\text{under }&H_1: T_n\left(Y\right) \sim \mathcal N \left(\mu_n, 2\mu_n\right),
		\end{align*}
		we see that the case $x_0 = \frac{1}{2}(x_1 + x_2)$ is indeed the hardest to distinguish.
		
		The proof for the \textbf{variance stabilized Gaussian model} follows the same lines and is therefore omitted.
		
		For the \textbf{Poisson} model we have two cases to consider. Whenever $t \gg \sqrt{n}\log^8 n$ we can employ asymptotic equivalence and hence the result follows from the variance stabilized Gaussian model. If $t = n^{2-\delta}$ for some $\delta > 0$, we can prove a CLT also in the asymmetric case. The proof of the CLT is the same as previously (see the proof of \Cref{thm:CLTforPoi}) just for the ratio
		\[
		R:=\min_{j\in\{1,\ldots,n\},\; |a_j|\ne 0}\frac{\sqrt{\sum_{i=1}^{n}a_i^2\int_i h}}{|a_j|}
		\]
		we now use more terms in the series expansion (if $a_j=0$, then the corresponding summand in \eqref{eq:Ln} is zero). We have
		\begin{align*}
		y_i &= \frac{\lambda_{1i}-\lambda_{0i}}{\lambda_{0i}} = \frac{\int_i\Delta}{\int_i h}
		=
		\frac{\int_i h'}{\int_i h}\lambda
		+ \frac{1}{2}\frac{\int_i h''}{\int_i h}
		\left(\frac{d^2}{4} + \lambda^2 \right)
		+ o\left(\lambda^2\right)
		+ o\left(d^2    \right),\\
		a_i &= \log(1+y_i) 
		= y_i + O\left(y_i^2\right),\\
		\sqrt{\sum_{i=1}^{n}a_i^2\int_i h}
		&= \sqrt{\sum_{i=1}^{n}\log(1 + y_i)^2\int_i h}
		= \sqrt{\sum_{i=1}^n \left(y_i^2 + O(y_i^3)\right)\int_i h}\\
		&= \sqrt{\sum_{i=1}^{n} 
			\left(\frac{(\int_i h')^2}{\int_i h}\lambda^2 
			+ \frac{\int_i h'\int_i h''}{\int_i h}\lambda\, \frac{d^2}{4} 
			+ \frac{(\int_i h'')^2}{\int_i h}\frac{d^4}{64}\right)
			+ O\left(\lambda^3  \right) 
			+ o\left(        d^4\right) 
			+ o\left(\lambda d^2\right)},
		\end{align*}
		and thus $R= O(1)$ as before.
		The rest of the proof is the same as in \Cref{thm:CLTforPoi} and is therefore omitted.
		
		The calculation of the asymptotic resolution in the Poisson model essentially boils down to \Cref{eq:expDiffGreaterThanSd} stated here once more for convenience
		\begin{align}\label{eq:poiExpVarbB2}
		q_{1-\alpha}\sqrt{\V{H_0}{T_n}} - q_{\beta}\sqrt{\V{H_1}{T_n}}
		&=
		\E{H_1}{T_n} - \E{H_0}{T_n} + o(1)
		\Longleftrightarrow\\
		\sqrt{\sum_{i=1}^n \lambda_{0i}\left(y_i^2 + O(y_i^3)\right)}
		&=
		q_{1-\alpha} - q_{\beta}
		= q_{1-\beta} - q_{\alpha}.\nonumber
		\end{align}
		Hence, using the above calculations and $\lambda_{0i} = tp_{0i}$, \Cref{eq:poiExpVarbB2} is equivalent to
		\begin{align*}
		t\Bigg(\lambda^2\int_0^1\frac{(h')^2}{h} 
		+ \frac{d^4}{64}\int_0^1\frac{(h'')^2}{h}
		+ \lambda\Bigg(\frac{d^2}{4} &+ \lambda^2\Bigg)\int_0^1\frac{h'h''}{h}
		+ o\left(\lambda^3   \right)
		+ o\left(d^2\lambda\right)
		+ o\left(d^4       \right)\Bigg)\\ 
		&\hspace{25mm}=
		(q_{1-\beta}-q_{\alpha})^2.
		\end{align*}
		Since the psf $h$ is even, $h'$ is odd and $h''$ is even. Hence,
		\[
		\int_0^1\frac{h'(x-0.5)\, h''(x-0.5)}{h(x-0.5)}\,\mathrm{d}x = 0
		\]
		and thus the left hand side considered as a function of $\lambda$ attains its minimum at $\lambda = 0$. This implies that for given values of $\alpha$, $t$ and $d$, the power $1-\beta$ is the smallest when $\lambda = 0$, i.e. $x_0 = \frac{1}{2}(x_1 + x_2)$, is the most difficult alternative.
	\end{proof}
	
	\subsection{An integral approximation}\label{sec:proofs}
	
	\begin{lemma}\label{lem:summingRatio}
		Let $f:[0,1]\to\reals$ and $g:[0,1]\to\reals_{>0}$ be two absolutely continuous functions.
		Then
		\begin{equation*}
		\sum_{i=1}^{n}\frac{\left(\int_{(i-1)/n}^{i/n} f(x)\,\mathrm{d}x\right)^2}{\int_{(i-1)/n}^{i/n} g(x)\,\mathrm{d}x} 
		\xrightarrow{n\to \infty}
		\int_{0}^{1} \frac{f(x)^2}{g(x)}\,\mathrm{d}x <\infty.
		\end{equation*}
	\end{lemma}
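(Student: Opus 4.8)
The plan is to reduce the sum to a tagged Riemann sum via the mean value theorem for integrals, and then absorb the discrepancy coming from the fact that the evaluation point produced for $f$ need not coincide with the one produced for $g$.

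First I would collect the elementary regularity facts. Absolute continuity on $[0,1]$ implies ordinary continuity, so $f$ and $g$ are continuous; by compactness $g$ attains a minimum, which is some constant $c>0$ since $g>0$, and $f$ is bounded, say $\abs{f}\le M$ on $[0,1]$. Consequently $\phi:=f^2/g$ is continuous, hence Riemann integrable, with $0\le\phi\le M^2/c$, and in particular $\int_0^1\phi(x)\,\mathrm{d}x<\infty$, which already gives the finiteness claim. Fix $n$ and apply the mean value theorem for integrals on each bin $[(i-1)/n,i/n]$ separately to the continuous functions $f$ and $g$: there exist $\xi_i,\eta_i\in[(i-1)/n,i/n]$ with $\int_{(i-1)/n}^{i/n}f(x)\,\mathrm{d}x=f(\xi_i)/n$ and $\int_{(i-1)/n}^{i/n}g(x)\,\mathrm{d}x=g(\eta_i)/n$.

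Substituting and splitting off the common evaluation point $\xi_i$ then gives
\[
\sum_{i=1}^{n}\frac{\left(\int_{(i-1)/n}^{i/n}f(x)\,\mathrm{d}x\right)^2}{\int_{(i-1)/n}^{i/n}g(x)\,\mathrm{d}x}
=\frac1n\sum_{i=1}^{n}\phi(\xi_i)+\frac1n\sum_{i=1}^{n}f(\xi_i)^2\left(\frac{1}{g(\eta_i)}-\frac{1}{g(\xi_i)}\right).
\]
The first sum is a tagged Riemann sum of $\phi$ over the uniform partition of mesh $1/n\to0$, hence converges to $\int_0^1\phi(x)\,\mathrm{d}x$. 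For the second sum, since $\xi_i$ and $\eta_i$ lie in the same bin we have $\abs{\xi_i-\eta_i}\le1/n$, so, writing $\omega_g$ for the modulus of continuity of $g$ (finite by uniform continuity of $g$ on the compact interval $[0,1]$) and using $g\ge c$,
\[
\left\lvert\frac{1}{g(\eta_i)}-\frac{1}{g(\xi_i)}\right\rvert=\frac{\abs{g(\xi_i)-g(\eta_i)}}{g(\xi_i)g(\eta_i)}\le\frac{\omega_g(1/n)}{c^2},
\]
so the second sum is bounded in absolute value by $M^2\omega_g(1/n)/c^2\to0$ as $n\to\infty$. Adding the two contributions yields the claimed limit $\int_0^1 f(x)^2/g(x)\,\mathrm{d}x$.

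There is no real obstacle here; the only point that requires care is that the bin-averages of $g$ stay bounded away from $0$, which is precisely where positivity of $g$ together with compactness of $[0,1]$ enters, and everything else is the standard convergence of Riemann sums for a continuous integrand. As a cheap cross-check of the upper half one may note the termwise Cauchy--Schwarz estimate $\left(\int_{(i-1)/n}^{i/n}f\right)^2\le\int_{(i-1)/n}^{i/n}\tfrac{f^2}{g}\cdot\int_{(i-1)/n}^{i/n}g$, which gives $\limsup_{n}\le\int_0^1 f^2/g$ with no regularity assumption at all; but since the Riemann-sum argument already produces the full limit, this is not needed.
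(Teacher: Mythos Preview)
Your proof is correct and follows essentially the same strategy as the paper: apply the mean value theorem for integrals on each bin, recognize the resulting expression as a perturbed Riemann sum for $f^2/g$, and show the perturbation vanishes by uniform continuity. The only cosmetic difference is that the paper aligns the evaluation points by writing $f(\xi_i')=f(\xi_i)+(f(\xi_i')-f(\xi_i))$ and expanding the square (using continuity of $f$), whereas you align them by writing $1/g(\eta_i)=1/g(\xi_i)+(1/g(\eta_i)-1/g(\xi_i))$ (using continuity of $g$ and the lower bound $g\ge c$); both are equivalent realizations of the same idea.
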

	
	\begin{proof}
		Note that $f(x)^2/g(x)$ is absolutely continuous, and thus Riemann integrable.
		Using the mean value theorem we get
		\begin{align*}
		\sum_{i=1}^{n}\frac{\left(\int_{(i-1)/n}^{i/n}f(x)\,\mathrm{d}x\right)^2}{\int_{(i-1)/n}^{i/n}g(x)\,\mathrm{d}x}
		&=
		\frac{1}{n}\sum_{i=1}^{n}\frac{f(\xi'_i)^2}{g(\xi_i)} + o(1)
		=
		\frac{1}{n}\sum_{i=1}^{n}\frac{\left(f(\xi'_i)-f(\xi_i)+f(\xi_i)\right)^2}{g(\xi_i)} + o(1)\\
		&=
		\frac{1}{n}\sum_{i=1}^{n}\frac{(f(\xi'_i)-f(\xi_i))^2+2(f(\xi_i')-f(\xi_i))f(\xi_i)+f(\xi_i)^2}{g(\xi_i)} + o(1)
		\end{align*}
		with $\xi_i,\xi'_i\in[(i-1)/n,i/n]$. Now by continuity of $f$ it holds for all $1\le i \le n$ that
		\[
		|f(\xi_i)-f(\xi_i')|
		\le
		\max_{x\in \left[\frac{i-1}{n},\frac{i}{n}\right]}f(x)
		- \min_{x\in \left[\frac{i-1}{n},\frac{i}{n}\right]}f(x)\to 0,
		\]
		as $n\to\infty$. Thus, by Riemann integrability
		\[
		\sum_{i=1}^{n}\frac{\left(\int_{(i-1)/n}^{i/n}f(x)\,\mathrm{d}x\right)^2}{\int_{(i-1)/n}^{i/n}g(x)\,\mathrm{d}x}
		=
		\frac{1}{n}\sum_{i=1}^{n}\frac{f(\xi_i)^2}{g(\xi_i)} + o(1)\to \int_{0}^{1}\frac{f(x)^2}{g(x)}\,\mathrm{d}x,
		\]
		as $n\to\infty$.
	\end{proof}
	
	\subsection{Generalized testing problem with different weights}\label{sec:generalCase}
	
	Consider the generalized testing problem with the hypothesis
	\begin{subequations}\label{eq:H0vsH1general}
		\begin{equation*}
		H_0 : g(x) = \psf(x-x_0)
		\end{equation*}
		against the alternative
		\begin{equation*}
		H_1 : g(x) = q\,\psf(x-x_1) + (1-q)\,\psf(x-x_2),
		\end{equation*}
	\end{subequations}
	with $q\in (0,1)$ and $x_0 = q x_1 + (1-q) x_2$ fixed. The case considered in the main part of this paper corresponds to $q=1/2$.
	
	Assume w.l.o.g. that $x_2 \ge x_1$ and let $d=x_2 - x_1$. Note that $x_0-x_1 = (1-q)d$ and $x_0-x_2 = -qd$. Hence,   \Cref{eq:Delta} becomes
	\begin{align}
	\Delta(x-x_0) 
	&:= q h(x-x_1)+ (1-q) h(x-x_2)-h(x-x_0)\nonumber\\
	&= q\sum_{j=0}^2\frac{h^{(j)}(x-x_0)}{j!}(x_0-x_1)^j
	+
	(1-q)\sum_{j=0}^2\frac{h^{(j)}(x-x_0)}{j!}(x_0-x_2)^j-h(x-x_0)\nonumber\\
	&\hspace{8cm}+o\left((x_0-x_1)^2+(x_0-x_2)^2\right)\nonumber\\
	&= \frac{q(1-q)d^2}{2}\, h''(x-x_0) + o\left(d^2\right).\label{eq:DeltaGeneral}
	\end{align}
	
	\textbf{Homogeneous Gaussian model}\\
	\Cref{eq:mu_nDefinition} becomes
	\[
	\mu_n 
	= \frac{t^2}{2}\sum_{i=1}^n \left(p_{1i} - p_{0i}\right)^2
	= \frac{t^2}{2}\sum_{i=1}^n \left( \int_i\Delta \right)^2
	= \frac{t^2 d^4 q^2 (1-q)^2}{8n} \int_{0}^{1}\left(h''\right)^2  +  o\left(\frac{t^2 d^4}{n}\right).
	\]
	Just like previously, to have the type I error $=\alpha$ and type II error $=\beta$,we have to set $\mu_n = (q_{1-\alpha} - q_{\beta})^2/2$, see \eqref{eq:gaussType1error} and \eqref{eq:gaussType2error}. Hence, in this case the asymptotic resolution \eqref{eq:resolutionGauss} becomes
	\begin{equation*}
	d \asymp \frac{\sqrt{2}}{\sqrt{q(1-q)}}\sqrt{q_{1-\beta}-q_{\alpha}} \left(\int_0^1 \psf''\left(x-x_0\right)^2 \diff x\right)^{-1/4} t^{-1/2}\, n^{1/4}.
	\end{equation*}
	Thus, the alternative where the psfs have the same weights, i.e. $q=1/2$, is the \textit{easiest}, since this is the maximum of $q(1-q)$. Also note that as $q\to0$ or $q\to 1$, the resolution $d\to\infty$, as expected.
	\vspace{0.5mm}
	
	\textbf{Variance stabilized Gaussian model\\}
	Using \eqref{eq:DeltaGeneral} \Cref{eq:VSGtaylor} becomes
	\[
	\left(\sqrt{p_{1i}} - \sqrt{p_{0i}}\right)^2 
	=
	\int_i h \left(\sqrt{1 + \frac{\int_i \Delta}{\int_i h}} - 1 \right)^2
	=
	\frac{d^4q^2(1-q)^2}{16} \frac{\left( \int_i h'' \right)}{\int_i h} + o\left(\frac{d^4}{n}\right).
	\]
	Thus, the generalized equivalent of \eqref{eq:nu_n} is
	\[
	\nu_n =
	2t\sum_{i=1}^{n}\left(\sqrt{p_{1i}} - \sqrt{p_{0i}}\right)^2 =
	\frac{td^4q^2(1-q)^2}{8} \int_0^1\frac{ (h'')^2}{h} + o\left(td^4\right).
	\]
	As before, by \eqref{eq:vsgType1error} and \eqref{eq:vsgType2error} we have to set $\nu_n = (q_{1-\alpha} - q_{\beta})^2/2$ to have the type I error $=\alpha$ and type II error $=\beta$. Therefore, in this case the asymptotic resolution is
	\begin{equation}\label{eq:resolutionPoiGeneral}
	d \asymp \frac{\sqrt{2}}{\sqrt{q(1-q)}} \sqrt{q_{1-\beta} - q_{\alpha}} \left(\int_0^1 \frac{h''\left(x-x_0\right)^2}{h\left(x-x_0\right)} \diff x\right)^{-1/4} t^{-1/4}.
	\end{equation}
	
	\textbf{Poisson model\\}
	First of all, note that the proof in the asymptotic equivalence regime holds by the general VSG model proof above. As for the CLT regime, \Cref{eq:definitionOfy_iPoi} becomes
	\begin{align*}
	y_i 
	= \frac{\lambda_{1i}}{\lambda_{0i}} - 1
	= \frac{\lambda_{1i}-\lambda_{0i}}{\lambda_{0i}} 
	= \frac{\int_i\Delta}{\int_i h}
	&= \frac{q(1-q)}{2}\frac{\int_i h''}{\int_i h}d^2
	+ \frac{q(1-q)(1-2q)}{6}\frac{\int_i h'''}{\int_i h}d^3\nonumber\\
	&+ \frac{q(1-q)((1-q)^2-q(1-2q))}{24}\frac{\int_i h''''}{\int_i h}d^4
	+ o\left(d^4\right) 
	\end{align*}
	and the following terms $\E{H_0}{\statPoi}, \V{H_0}{\statPoi}, \E{H_1}{\statPoi}$ and $\V{H_1}{\statPoi}$ change accordingly. We skip these expressions due to their length and because they are not particularly insightful. However, it is clear that the CLTs under $H_0$ and $H_1$ still hold, just like in the symmetric alternative $q=1/2$ case. 
	
	The crux of the asymptotic resolution determination is \Cref{eq:expDiffGreaterThanSd} which in the general case reads
	\[
	\frac{q(1-q)\sqrt{t}d^2}{2}
	\sqrt{\int_0^1\frac{h''(x-x_0)^2}{h(x-x_0)}\,\mathrm{d}x}
	+o(d^4)
	= q_{1-\beta} - q_{\alpha}.
	\]
	Therefore, the asymptotic resolution is the same as in the general VSG model \eqref{eq:resolutionPoiGeneral}.

	\begin{remark}\label{rmk:symmetricHardestGeneralized}
		Note that the case $x_0 = q x_1 + (1-q) x_2$ (center of intensity) is the hardest to distinguish in the general testing problem \eqref{eq:H0vsH1general} \textit{for even psfs}; the proof easily follows from Section \ref{sec:symmetricallyPlacedHardest} by setting $\lambda =  x_0 - (q x_1 + (1-q) x_2)$.
	\end{remark}

	\section*{Acknowledgements}
	We gratefully acknowledge the support of the DFG, CRC 755 ``Nanoscale Photonic Imaging'', subproject A7, Cluster of Excellence 2067: Multiscale Bioimaging: From molecular medicine to networks of excitable cells (MBExC) and RTG 2088 ``Discovering structure in complex data: Statistics meets Optimization and Inverse Problems''. We are grateful to Alexander Egner and Jan Keller-Findeisen for helpful comments and discussions. Furthermore we thank two anonymous referees and an associate editor for constructive reports, which led to an improved presentation of the results.
	
	\bibliographystyle{apalike} %
	\bibliography{bibliography}     
	\vspace{1cm}
	
\end{document}